\newtheorem{theorem}{Theorem}[section]
\newtheorem{proposition}[theorem]{Proposition}
\newtheorem{lemma}[theorem]{Lemma}
\newtheorem{corollary}[theorem]{Corollary}
\newcommand{\N}{\mathbb N}
\newcommand{\R}{\mathbb R}
\newcommand{\bbS}{{\mathbb S}^2}
\newcommand{\T}{\mathbb T}
\newcommand{\Z}{\mathbb Z}
\newcommand{\cA}{\mathcal{A}}
\newcommand{\cL}{\mathcal{L}}
\newcommand{\cT}{\mathcal{T}}
\newcommand{\eps}{\varepsilon}
\newcommand{\e}{\eps}
\newcommand{\de}{\delta}
\newcommand{\dd}{\, \mathrm{d}}
\newcommand{\1}{\mathds{1}}
\newcommand{\vv}{\langle v\rangle}
\newcommand{\ww}{\langle w \rangle}
\newcommand{\vvp}{\langle v' \rangle}
\newcommand{\ddt}{\frac{\dd}{\dd t}}
\newcommand{\Qs}{Q_{\rm s}}
\newcommand{\Qns}{Q_{\rm ns}}
\newcommand{\dom}{\T^3\times\R^3}
\DeclareMathOperator{\pv}{p.v.}
\numberwithin{equation}{section}
\crefname{equation}{}{}
\title[Solutions of the Boltzmann equation with slow decay]{Local well-posedness of the Boltzmann equation with polynomially decaying initial data}
\thanks{AMS Subject Classification: 35Q20, 35A01, 35S05}
\author{Christopher Henderson}
\email{ckhenderson@math.arizona.edu}
\thanks{CH: Department of Mathematics, University of Arizona, Tucson, AZ 85721}
\author{Stanley Snelson}
\email{ssnelson@fit.edu}
\thanks{SS: Department of Mathematical Sciences, Florida Institute of Technology, Melbourne, FL 32901}
\author{Andrei Tarfulea}
\email{tarfulea@lsu.edu}
\thanks{AT: Department of Mathematics, Louisiana State University, Baton Rouge, LA 70803}
\thanks{CH was partially supported by NSF grant DMS-1907853. SS was partially supported by a Ralph E. Powe Award from ORAU. AT was partially supported by NSF grant DMS-1816643}
\begin{document}

\begin{abstract}
We consider the Cauchy problem for the spatially inhomogeneous non-cutoff Boltzmann equation with polynomially decaying initial data in the velocity variable.  We establish short-time existence for any initial data with this decay in a fifth order Sobolev space by working in a mixed $L^2$ and $L^\infty$ space that allows to compensate for potential moment generation and obtaining new estimates on the collision operator that are well-adapted to this space.  Our results improve the range of parameters for which the Boltzmann equation is well-posed in this decay regime, 
as well as relax the restrictions on the initial regularity. As an application, we can combine our existence result with the recent conditional regularity estimates of Imbert-Silvestre (arXiv:1909.12729 [math.AP]) to conclude solutions can be continued for as long as the mass, energy, and entropy densities remain under control. This continuation criterion was previously only available in the restricted range of parameters of previous well-posedness results for polynomially decaying initial data.
\end{abstract}

\maketitle

\section{Introduction}

We are concerned with the Boltzmann equation, a kinetic integro-differential equation that models the particle density $f(t,x,v)$ of a diffuse gas in phase space. A solution $f:[0,T]\times \T^3 \times \R^3\to \R_+$ satisfies
\begin{equation}\label{e:boltzmann}
	(\partial_t + v\cdot\nabla_x) f = Q(f,f), 
\end{equation}
where the bilinear collision operator is defined for functions $f_1,f_2:\R^3\to \R_+$ by
\begin{equation}\label{e:Q}
Q(f_1,f_2) = \int_{\R^3}\int_{\mathbb S^{2}} (f_1(v_*')f_2(v') - f_1(v_*)f_2(v))B(|v-v_*|, \cos\theta)  \dd \sigma \dd v_*,
\end{equation}
with 
\begin{equation}\label{e:Q'}
v' = \frac{v+v_*}{2}+\frac{|v-v_*|}{2}\sigma, \qquad v_*' = \frac{v+v_*}{2}-\frac{|v-v_*|}{2}\sigma, \quad \text{ and }
\cos\theta  = \frac{v-v_*}{|v-v_*|}\cdot \sigma.
\end{equation}
Here, $\theta$ represents the angle between the post-collisional velocities $v$ and $v_*$, and the pre-collisional velocities $v'$ and $v_*'$.

The collision kernel $B(|v-v_*|, \cos\theta)$ depends on the interaction potential between particles. The common choice of an inverse power law potential $\phi(r) = 1/r^{\alpha-1}$ for some $\alpha>2$ leads to a kernel of the form
\begin{equation}\label{e:B}
B(|v-v_*|,\cos\theta) = |v-v_*|^\gamma b(\cos\theta), \qquad b(\cos\theta)\approx |\theta|^{-2-2s},
\end{equation}
where $\gamma = (\alpha-5)/(\alpha-1)$ and $s = 1/(\alpha-1)$. We disregard the parameter $\alpha$ and consider arbitrary pairs $\gamma> -3$, $s\in (0,1)$, which is fairly common in the mathematical literature on the Boltzmann equation. More specifically, we restrict our attention to the case
\[ \max\left\{-3,-\frac 3 2 - 2s\right\} < \gamma < 0, \]
with $s\in (0,1)$ arbitrary. See Section \ref{s:ideas} below for an explanation of the technical reasons for the restriction on $\gamma$.

The angular cross-section $b(\cos\theta)$ has a non-integrable singularity for \emph{grazing collisions} (i.e. collisions with $\theta \approx 0$). This singularity reflects the fact that long-range interactions are taken into account by the collision operator. The version of \eqref{e:boltzmann} that includes this singularity is called the \emph{non-cutoff} Boltzmann equation, and that is the case of interest here.

In this article, we address the local well-posedness theory for \eqref{e:boltzmann}. The key feature of our study is that the initial data is allowed to have merely polynomial (rather than exponential or Gaussian) decay in the velocity variable. The only comparable prior result we are aware of is the work of Morimoto-Yang \cite{morimoto2015polynomial}, which proved existence under similar assumptions, but only addressed the regime $s \in (0, 1/2)$ and $\gamma \in (- 3/2, 0]$. 
We feel it is important to fill this gap in the literature, because $s\in [1/2,1)$ is the most delicate case, at least with respect to the severity of the grazing collisions singularity. This can readily be seen from \eqref{e:B}.  We should mention that uniqueness in a polynomially-weighted Sobolev space, but not existence, was proven in \cite{amuxy2011uniqueness} for the same range of $\gamma$ and $s$ that we consider here. 

Compared to \cite{morimoto2015polynomial}, we also improve the required number of derivatives of $f_{\rm in}$ from six to five. By avoiding any cutoff-based approximation, we provide a method for constructing solutions in weighted Sobolev spaces that is different from, and arguably simpler than, the methods commonly seen in the literature.


To state our results precisely, we need to define the following function spaces. With $\vv = \sqrt{1+|v|^2}$, define the weighted $L^p$ and Sobolev norms
\[
	\|g\|_{L^{p,n}} = \|\vv^n g\|_{L^p}
		\quad \text{ and }\quad
	\|g\|_{H^{k,n}} = \|\vv^n g\|_{H^k}.
\]
Next, for $k,n,m \geq 0$, define
\[
	X^{k,n,m} := H^{k,n} \cap L^{\infty,m}(\R^6)
		\quad\text{ and } \quad
	Y^{k,n,m}_T :=L^\infty_t([0,T] ; X^{k,n,m}).
\]

Notice that if $f\in Y^{k,n,m}_T$ solves~\eqref{e:boltzmann}, then $f \in W^{1,\infty}([0,T]; H^{k-2s,n-(\gamma + 2s)_+})$ as well (see, e.g., the estimates in \cite[Theorem 2.1]{amuxy2010regularizing}).   For succinctness, we sometimes denote $L^\infty([0,T]; H^{k,n})$ as $L^\infty_t H^{k,n}_{x,v}$ (similarly for other well-known Banach spaces).

Our main result is

\begin{theorem}\label{t:main}
	If $0 \leq f_{\rm in} \in X^{k,n,m}$ with $k\geq 5$, $n> 3/2 + (\gamma+2s)_+$, and $m$ sufficiently large depending on $k$, $n$, $\gamma$, and $s$, then there exists a time $T > 0$ (depending only on $\| f_{\rm in} \|_{X^{k,n,m}}$, $\gamma$, $s$, $k$, $n$, and $m$) and a unique $f\in Y_T^{k,n,m} \cap C([0,T]; H_{x,v}^{k,n})$ such that $f$ solves~\eqref{e:boltzmann} and $f(0,\cdot,\cdot) = f_{\rm in}$.  Moreover, $f\geq 0$.
\end{theorem}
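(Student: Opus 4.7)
My plan is a standard well-posedness strategy adapted to the mixed Sobolev--$L^\infty$ space $X^{k,n,m}$: first, close a priori estimates on a short time interval; second, construct solutions by a scheme compatible with those estimates; and third, handle uniqueness, nonnegativity, and time continuity. As the abstract advertises, the role of the heavy $L^{\infty,m}$ weight is to absorb the moment generation typically produced by $Q(f,f)$, so that the Sobolev weight $n$ need only satisfy the mild condition $n > 3/2 + (\gamma+2s)_+$; consequently the a priori estimate is the heart of the argument.

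For the a priori bound I would assume $f$ is smooth and rapidly decaying and aim to prove $\ddt \|f\|_{X^{k,n,m}} \leq \Phi(\|f\|_{X^{k,n,m}})$ for a locally bounded $\Phi$. For the $H^{k,n}$ component, differentiate the equation $\partial^\alpha$ times for $|\alpha|\leq k$, test against $\vv^{2n}\partial^\alpha f$, and integrate over $\dom$. The transport term is a perfect $x$-divergence on $\T^3$ modulo a commutator of the form $[\partial_v, v\cdot \nabla_x] = \partial_x$ that is lower order, and the collision term splits as $Q = \Qs + \Qns$. The singular part produces a coercive fractional dissipation together with commutator and weight-transfer errors; the non-singular part is more classical. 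The new collision estimates the paper promises should schematically read
\begin{equation*}
  \bigl| \langle Q(g,h), \vv^{2n} \partial^\alpha h \rangle \bigr| \lesssim \|g\|_{L^{\infty,m}} \|h\|_{H^{k,n}}^2 + \text{(lower-order terms)},
\end{equation*}
so that the factor that would otherwise demand the heavier $L^2$-weight $n+(\gamma+2s)_+$ is instead absorbed by the $L^{\infty,m}$ factor. For the $L^{\infty,m}$ component I would argue by a maximum principle applied to $\vv^m f$: along characteristics the gain term can be bounded pointwise via Sobolev embedding (this is where $k\geq 5$ and $n > 3/2+(\gamma+2s)_+$ enter), while the loss term provides damping that absorbs the weight commutator. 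Combining gives the desired differential inequality and short-time boundedness.

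To build solutions I would avoid the angular-cutoff approximations of previous works and instead set up a Picard-style iteration
\begin{equation*}
  (\partial_t + v\cdot\nabla_x) f^{(j+1)} = Q(f^{(j)}, f^{(j+1)}), \qquad f^{(j+1)}\big|_{t=0} = f_{\rm in},
\end{equation*}
which, for $f^{(j)}\in Y_T^{k,n,m}$ fixed, is a linear non-cutoff Boltzmann-type equation whose well-posedness and positivity-preservation (via the maximum principle for linear Kolmogorov equations with fractional diffusion in $v$) are standard. The a priori estimates, applied linearly, yield uniform $Y_T^{k,n,m}$ control of the whole sequence on a common interval $[0,T]$ depending only on $\|f_{\rm in}\|_{X^{k,n,m}}$. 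A contraction argument in a weaker weighted $L^2$ norm, modeled on the uniqueness estimate of \cite{amuxy2011uniqueness}, produces a limit $f\in Y_T^{k,n,m}$; uniform strong control lets us pass to the limit in the nonlinearity, and lower semicontinuity preserves the strong bound. Nonnegativity descends from each $f^{(j)}$ to $f$. Uniqueness then follows directly from the same weak-norm stability estimate, and $f\in C([0,T];H^{k,n})$ comes from the standard weak-strong argument: the equation gives $\partial_t f \in L^\infty_t H^{k-2s, n-(\gamma+2s)_+}_{x,v}$, hence weak continuity, which is upgraded to strong continuity by an energy identity (or a Friedrichs mollifier argument).

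The main obstacle, and where most of the real work lies, is producing the collision estimates sketched above. In the delicate range $s\in[1/2,1)$ with $\gamma<0$ the grazing singularity is borderline and the commutator of $Q$ with $\vv^n\partial^\alpha$ must be decomposed carefully enough that the top-order pieces are caught by the $L^{\infty,m}$ factor rather than forcing a higher polynomial weight in the $L^2$ part (which cannot close, since the equation generates moments at rate $(\gamma+2s)_+$). Once such estimates are established in a form compatible with the $X^{k,n,m}$ norm, the existence, uniqueness, nonnegativity, and continuity statements of Theorem~\ref{t:main} all follow from the scheme above.
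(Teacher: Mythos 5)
Your overall skeleton---a priori estimates in the mixed space $X^{k,n,m}$, iteration on the linear equation $(\partial_t + v\cdot\nabla_x)f^{(j+1)} = Q(f^{(j)},f^{(j+1)})$, contraction in a weaker weighted $L^2$ norm, and uniqueness from \cite{amuxy2011uniqueness}---is the same as the paper's. But there is a genuine gap at the step you dismiss as ``standard'': the solvability of the linear \emph{non-cutoff} equation when the coefficient $g$ decays only polynomially. No off-the-shelf result covers this; the prior literature solves linearized problems only after an angular cutoff (and typically with Gaussian-decaying data), and avoiding that cutoff is one of the main points of the paper. The paper must build the linear solution from scratch (Proposition \ref{p:linear_existence}): it introduces a doubly regularized operator $\cL_{\sigma,\e,\de}$ (mollified $g$ and data, an added $\e\Delta_{x,v}$, and a cutoff $\chi(\de v)$ of large velocities both in the transport term and inside $Q$), proves Schauder estimates for the regularized problem, runs the method of continuity in $\sigma$, and then deregularizes using the a priori estimates of Proposition \ref{p:a_priori}; both parameters $\e$ and $\de$ are needed because the $H^{k,n}$ energy estimate exploits symmetries of $Q$ that break when $\de>0$. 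Nonnegativity is also obtained from this construction (maximum principle at the regularized, second-order parabolic level), not from an abstract maximum principle for the fractional kinetic operator. Without some such construction your Picard iteration has no solutions to iterate on.

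Two further points where your sketch would not survive contact with the non-cutoff operator. First, your $L^{\infty,m}$ argument invokes a gain/loss splitting (``the loss term provides damping''), but for the non-cutoff kernel the gain and loss integrals are separately divergent; the paper instead runs a comparison argument against the explicit barrier $\overline f \sim e^{\int A}\vv^{-m}$, whose key ingredient is the new estimate $\|Q(g,\vv^{-m})\|_{L^{\infty,m}}\lesssim \|g\|_{L^{\infty,m}}$ (Proposition \ref{p:Q_estimates}(v)); no damping is needed or available. Second, your schematic energy estimate with only $\|g\|_{L^{\infty,m}}$ on the right holds only for the top-order term $Q(g,\partial^\alpha_x\partial^\beta_v f)$, where a symmetrization produces the coercive quantity that absorbs the weight commutator (Proposition \ref{p:Q_estimates}(iii)); for intermediate terms, where derivatives land on $g$, one genuinely needs weighted Sobolev norms with weights exceeding $n$, and what closes the estimate is the regularity-for-decay interpolation of Lemma \ref{l:moment_interpolation} (possible because $m\gg n$), plus a separate treatment of the case with $k-1$ derivatives on $f$ via Proposition \ref{p:Q_estimates}(iv). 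You gesture at this (``absorbed by the $L^{\infty,m}$ factor''), but the proposal does not identify the interpolation mechanism or the symmetrization, which is precisely where $k\geq 5$, the largeness of $m$, and the restriction $\gamma<0$ enter.
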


As mentioned above, the uniqueness part of this theorem was established in \cite{amuxy2011uniqueness}.

Note that the assumption that $f_{\rm in} \in L^{\infty,m}$ is not a requirement on the regularity of $f$ since $H^{k,n}$ embeds into $L^{\infty,n}$; instead, it is only a requirement on the decay of $f$ for large velocities.

One application of Theorem \ref{t:main} is as follows: by the conditional regularity theorem recently established in \cite{imbert2019smooth}, when $s \in (0,1/2)$ and $\gamma \in (-3/2,0]$, spatially periodic solutions can be extended past a given time $T$ provided the mass, energy, and entropy densities of $f$ are uniformly bounded above, and the mass density is uniformly bounded below; in other words,
\begin{equation}\label{e:hydro}
 0< m_0 \leq \int_{\R^3} f(t,x,v)\dd v \leq M_0, \  \int_{\R^3} |v|^2 f(t,x,v) \leq E_0, \  \mbox{and } \int_{\R^3}f(t,x,v) \log f(t,x,v) \dd v \leq H_0,
 \end{equation}
uniformly in $t\in [0,T], x\in \R^3$.  The main idea is to pair the conditional regularity estimates~\cite{imbert2018schauder, imbert2018decay} with a well-adapted short-time well-posedness result.  In~\cite{imbert2019smooth}, the authors use the well-posedness result of~\cite{morimoto2015polynomial}, which is the cause of their restriction on $s$ and $\gamma$.  In view of \Cref{t:main}, this can now be applied to $s \in (0,1)$ and $\gamma \in (\max\{-3/2 - 2s, -3\}, 0)$ as well.  That is:

\begin{corollary}\label{cor}
With $\gamma$ and $s$ as above, assume in addition that $\gamma +2s \in [0,2]$. Let $f$ be any solution $f$ of \eqref{e:boltzmann} with initial data in Schwartz class, i.e. $\vv^\ell \partial_x^\alpha \partial_v^\beta f_{\rm in}\in L^\infty(\mathbb T^3, \R^3)$ for all $\ell \geq 0$ and any multi-indices $\alpha,\beta \in \mathbb N^3$. Then $f$ can be extended for as long as the condition \eqref{e:hydro} holds. In other words, if $f$ exists on $[0,T)\times \mathbb T^3 \times \R^3$ but cannot be extended to a solution on $[0,T+\eps)\times \mathbb T^3\times \R^3$ for any $\eps>0$, then at least one of the inequalities in \eqref{e:hydro} must degenerate as $t \to T$.
\end{corollary}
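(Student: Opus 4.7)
The plan is a continuation argument that chains \Cref{t:main} with the conditional regularity framework of Imbert--Silvestre. Arguing by contraposition, I would assume that $f$ exists on $[0,T) \times \T^3 \times \R^3$ and that all four quantities in \eqref{e:hydro} are bounded uniformly in $(t,x) \in [0,T) \times \T^3$ (with the lower bound $m_0 > 0$ on the mass density preserved up to $T$). The goal is then to construct an extension of $f$ to $[0, T+\eps) \times \T^3 \times \R^3$ for some $\eps > 0$, contradicting the presumed blow-up at $T$.

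First I would invoke the conditional regularity results of \cite{imbert2019smooth}, together with the Schauder estimates of \cite{imbert2018schauder} and the decay estimates of \cite{imbert2018decay}, to conclude that, under the assumed uniform hydrodynamic bounds and given that $f_{\rm in}$ is Schwartz, every polynomially-weighted Sobolev and $L^\infty$ norm of $f(t,\cdot,\cdot)$ remains bounded uniformly in $t \in [0,T)$. As the paragraph preceding \Cref{cor} emphasizes, the parameter restriction in \cite{imbert2019smooth} entered only through its short-time well-posedness input; the a priori regularity estimates themselves are valid in the enlarged range $s \in (0,1)$, $\gamma \in (\max\{-3,-3/2-2s\},0)$ with $\gamma + 2s \in [0,2]$. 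In particular, fixing a triple $(k,n,m)$ satisfying the hypotheses of \Cref{t:main},
\[
    M \;:=\; \sup_{t \in [0,T)} \| f(t,\cdot,\cdot) \|_{X^{k,n,m}} \;<\; \infty.
\]

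Next I would pick $t_0 \in (0,T)$ and feed the initial datum $f(t_0,\cdot,\cdot) \in X^{k,n,m}$ into \Cref{t:main}. This furnishes a solution $\tilde f$ on $[t_0, t_0 + \tau]$ with $\tau > 0$ depending only on $M$ (and on the fixed parameters $\gamma, s, k, n, m$), so $\tau$ is uniform in the choice of $t_0$. Choosing $t_0 > T - \tau$ and using the uniqueness conclusion of \Cref{t:main} to glue $f$ and $\tilde f$ on their overlap, I obtain a single solution on $[0, t_0 + \tau]$, which strictly contains $[0,T]$. This is the desired extension, yielding the contradiction.

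The main obstacle I anticipate is verifying that the conditional regularity machinery of \cite{imbert2019smooth, imbert2018schauder, imbert2018decay} genuinely outputs a uniform bound on the specific norm $\| \cdot \|_{X^{k,n,m}}$ (rather than, for instance, only a weighted $L^\infty$ bound or a bound that blows up as $t \to T^-$), and that no implicit parameter restriction remains beyond those listed in the hypotheses of \Cref{cor}. The Schwartz assumption on $f_{\rm in}$ is convenient here, as it sidesteps the $t \to 0^+$ singularity present in the instantaneous smoothing estimates of those works and ensures smooth propagation all the way from $t=0$. Once the correct norm is extracted, the remaining bootstrap is entirely standard.
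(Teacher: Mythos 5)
Your argument is correct and is essentially the paper's intended proof: the paper omits the details precisely because the proof is the continuation argument of \cite{imbert2019smooth} (conditional regularity plus propagation of polynomial decay giving a uniform $X^{k,n,m}$ bound under \eqref{e:hydro}, then restarting with \Cref{t:main} at some $t_0>T-\tau$ and gluing by uniqueness), with the only change being the substitution of \Cref{t:main} for the well-posedness result of \cite{morimoto2015polynomial}. The obstacle you flag is exactly what \cite{imbert2019smooth, imbert2018decay} supply in the range $\gamma+2s\in[0,2]$, so no further work is needed.
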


The fact that our initial data is allowed to decay merely polynomially is crucial in establishing \Cref{cor} because polynomial decay in $v$ in the initial data is propagated forward in time (see \cite{imbert2018decay}) but it is not currently known if the same is true for Gaussian decay. 
Since the proof of \Cref{cor} is exactly as in the previously established case of~\cite{imbert2019smooth}, we omit it. At the cost of more technical arguments, we expect this continuation criterion can be extended to any solution with initial data in $X^{k,n,m}$ with $m$ sufficiently large.

It would be interesting to extend our existence result to the two remaining cases, $\gamma \in (-3,\max(-3/2-2s,-3)]$ and $\gamma \geq 0$; we leave this for future work. Another open issue is decreasing the required regularity of the initial data, as in our recent work \cite{HST2019rough} on the closely related Landau equation, which required only polynomial decay in $v$ and H\"older continuity for $f_{\rm in}$ to establish existence and uniqueness (and if the assumption of H\"older continuity is dropped, we can establish existence but not uniqueness).

\subsection{Related work}\label{s:related}

We focus mainly on the non-cutoff Boltzmann equation in this discussion. The cutoff case has its own long history that we omit here (see, for example \cite[Chapter 2, Section 3]{villani2002review} and the references therein).

Existence results for the non-cutoff Boltzmann equation have come in the following flavors:

\begin{itemize}
	\item {\bf Spatially homogeneous solutions.} Many more results on existence and regularity are available in the case when $f$ and $f_{\rm in}$ are independent of $x$: see  \cite{chen2011boltzmann, DM2005Lp, desvillettes2004smooth, goudon, villani1998homogeneous} and the references therein. Even in this case, global-in-time existence has only been proven when $\gamma \geq -2$: see \cite{desvillettes2009homogeneous}. We should also mention measure-valued solutions, which are known to exist globally and regularize depending on the value of $\gamma$~\cite{lu2012measure,morimoto2016measure}.
	
	\item {\bf Weak solutions.} For the full, inhomogeneous equation, renormalized solutions with defect measure were constructed by Alexandre-Villani \cite{alexandre2002longrange}, see also \cite{alexandre2004landau}. These solutions are a generalization of the renormalized solutions first constructed by DiPerna-Lions \cite{diperna1989cauchy} for the cutoff Boltzmann equation. The uniqueness and regularity of these solutions are not understood.

	\item {\bf Close-to-equilibrium solutions.} When  $f_{\rm in}$ is sufficiently close to a Maxwellian equilibrium state ($c_1 e^{-c_2|v|^2}$ with $c_1,c_2>0$) in an appropriate norm, solutions  exist globally and converge to equilibrium as $t\to \infty$: see for example \cite{amuxy2011hardpotentials,alexandre2012global,amuxy2011global,  alonso2018non, gressman2011boltzmann,he2017global,herau2019regularization}. This is the only setting in which global, classical solutions to the inhomogeneous equation are known to exist.
	
	
	\item {\bf Short-time solutions.} There are several existence results for solutions on a time interval $[0,T]$, for example \cite{alexandre2001solutions, amuxy2010regularizing, amuxy2011bounded, amuxy2013mild}. Most commonly, the initial data is required to have Gaussian decay in velocity, and lie in a Sobolev space of order 4 or higher. In \cite{morimoto2015polynomial}, the authors weaken the decay assumption from Gaussian to polynomial at the expense of working in a weighted $H^6$ space.
\end{itemize}

Global existence of spatially inhomogeneous solutions with non-perturbative initial data is a very challenging open problem. In the last few years, good progress has been made on \emph{conditional regularity} for solutions of non-cutoff Boltzmann under the physically relevant assumptions \eqref{e:hydro}. Silvestre \cite{silvestre2016boltzmann} established \emph{a priori} $L^\infty$ bounds, Imbert-Silvestre \cite{imbert2016weak} established $C^\alpha$ regularity, Imbert-Mouhot-Silvestre \cite{imbert2018decay} established polynomial decay estimates as $|v|\to \infty$, and Imbert-Silvestre \cite{imbert2019smooth} finally showed $C^\infty$ regularity. As mentioned above, our Theorem \ref{t:main} combined with the result of \cite{imbert2019smooth} establishes \eqref{e:hydro} as a continuation criterion for solutions.

\subsection{Proof ideas}\label{s:ideas}

The key tool in our proof is an \emph{a priori} estimate, given a fixed $g\in Y_T^{k,n,m}$, in the $Y_T^{k,n,m}$ norm for solutions of the linear Boltzmann equation
\[
	\partial_t f + v\cdot \nabla_x f = Q(g,f).
\]
This estimate comes from the combination of polynomially-weighted $L^2$ estimates (that are proven by the energy method) and polynomially-weighted $L^\infty$ estimates (that are proven using comparison principle arguments). Interpolating between these two estimates compensates for the moment loss generated by the collision operator $Q$ in the energy estimates. A key observation is the following: when seeking estimates of $\ell$th order, the moment loss can be avoided in the highest order energy estimates (i.e., the estimates of $\partial_x^\alpha \partial_v^\beta f$ with $|\alpha|+|\beta| = \ell$) by carefully exploiting the symmetry properties of $Q$.  However, intermediate terms do not have such nice symmetry properties and the moment loss can be handled by trading regularity for decay via interpolation (see \Cref{l:moment_interpolation}).  This uses the fact that we have {\em more} moments on the $0^{\text{th}}$ order term (reflected in the fact that $m\gg n$ and $f\in L^{\infty,m}$), and allows us to obtain a closed estimate for $f$ in $Y_T^{k,n,m}$.  A similar idea was used by He and Yang to construct polynomially bounded solutions to the Landau-Coulomb equation via cut-off Boltzmann equation~\cite{he2014boltzmannlandau}.  We also mention the work of Luk~\cite{luk2019vacuum}, who uses a mixture of $L^2$ and $L^\infty$ spaces to prove stability of vacuum (the steady state $0$) for the Landau equation.  The choice of space for Luk is crucial in obtaining appropriate time decay (see also the work of Chaturvedi~\cite{chaturvedi2019local,chaturvedi2020stability} who sidesteps this with a ``hierarchy of weighted norms'' involving fewer weights on higher regularity norms).


To pass from \emph{a priori} estimates to an existence proof, we first construct a sequence $f_i$ with $f_0(t,x,v) = f_{\rm in}(x,v)$ and
\begin{equation}\label{e:linear-iteration}
 \begin{cases} \partial_t f_i+ v\cdot \nabla_x f_i &= Q(f_{i-1},f_i),\\
f_i(0,\cdot,\cdot) &= f_{\rm in}.\end{cases}
\end{equation}
To prove the existence of $f_i$, we use the method of continuity combined with our \emph{a priori} estimates. 
With the existence of $f_i$ established, we can build a solution to the true nonlinear equation \eqref{e:boltzmann} with a compactness argument based on the same \emph{a priori} estimates.  A benefit of our construction is that we obtain the non-negativity of $f_i$ for free (see \Cref{p:linear_existence}).

It is interesting to compare our method to the overall strategy applied in earlier works such as  \cite{amuxy2010regularizing,amuxy2011bounded, amuxy2013mild,morimoto2015polynomial}. This strategy, which has been quite successful in constructing short-time solutions to the non-cutoff Boltzmann equation in various regimes, is based on approximating \eqref{e:boltzmann} with the cutoff Boltzmann equation, by replacing $b(\cos\theta)$ in \eqref{e:B} with a bounded function $b_\eps(\cos\theta)$ that converges to $b(\cos\theta)$ as $\eps \to 0$. 
Letting $Q_\eps(f,f)$ be the corresponding collision operator, this allows one to write $Q_\eps$ as a sum of gain and loss terms, $Q_\eps = Q_+ - Q_-$, with
\[\begin{split}
 Q_+(f_1,f_2) &= \int_{\R^3}\int_{\mathbb S^{2}} f_1(v_*')f_2(v') |v-v'|^\gamma b_\eps(\cos\theta)  \dd \sigma \dd v_*,\\ 
 Q_-(f_1,f_2) &= \int_{\R^3}\int_{\mathbb S^{2}}  f_1(v_*)f_2(v)|v-v'|^\gamma b_\eps(\cos\theta)  \dd \sigma \dd v_*.
 \end{split}
 \]
Note that these integrals may not be well-defined if the singularity of $b(\cos\theta)$ is included. Then, the authors of \cite{morimoto2015polynomial}  set up an iteration of the form
\[ \begin{cases} \partial_t f_i+ v\cdot \nabla_x f_i &= Q_+(f_{i-1},f_{i-1}) - Q_-(f_{i-1},f_{i}),\\
f_i(0,\cdot,\cdot) &= f_{\rm in},\end{cases}\]
which is different from our iteration \eqref{e:linear-iteration} because the right-hand side is not the linear  operator $Q(f_{i-1},f_i)$. This approach has the benefit that existence of $f_i$ can be deduced easily from a Duhamel-type formula. Weighted Sobolev estimates on $f_i$ provide convergence to a solution $f_\eps$ of the nonlinear (cutoff) Boltzmann equation with right-hand side $Q_\eps(f_\eps,f_\eps)$, and finally, estimates that are uniform in $\eps$ establish existence of a solution to the non-cutoff equation by compactness.


One benefit of working with the cutoff equation is that when $s \geq 1/2$ the non-cutoff collision operator $Q$ is well-defined only in a ``principal value" sense because of the singularity in $b(\cos\theta)$ and, in principle, integral estimates on $Q$ may not commute with the limit involved in this principal value.  In previous well-posedness proofs, this potential issue is sidestepped since estimates are obtained at the cutoff level with bounded angular cross-section $b_\eps(\cos\theta)$. However, despite working at the non-cutoff level, this issue causes no problems in our estimates. Indeed, for the only estimates in which we work with $Q$ directly (\Cref{p:Q_estimates}.(iii)), our manipulations of $Q$ follow the work of~\cite{amuxy2011bounded} in which the authors are working directly with the cutoff kernel (although the bounds we obtain are slightly different). On the other hand, when we work with the Carleman decomposition (see ~\Cref{l:kernel_bounds}), our estimates are always done by decomposing integrals into sums of integrals over annuli, and this is compatible with the principal value. Thus, we ignore this technical point in the sequel.

The main novelty of \cite{morimoto2015polynomial} was the ability to estimate $f_i$ directly, rather than dividing the solution by a time-dependent Gaussian $\mu = e^{-(\rho+\kappa t)\vv^2}$ and working with the equation for $g = \mu^{-1} f$ as in \cite{amuxy2010regularizing,amuxy2011bounded, amuxy2013mild}, so that the assumption of Gaussian decay in velocity in the initial data can be removed.

Our method in this article is novel in that we base our approximation on the linear, non-cutoff Boltzmann equation, so that no cutoff approximation is required. It is also worth noting that essentially the same \emph{a priori} estimates in $L^{\infty,m}$ and $H^{k,n}$ are used to construct the linear and nonlinear solutions.  This is in contrast to previous works in which separate estimates are required at the linear (and cutoff) and the non-linear (and non-cutoff) levels.

On a technical level, we make heavy use of the decomposition of $Q(f,f)$ into a fractional diffusion operator $Q_{\rm s}(f,f)$ and a lower-order term $Q_{\rm ns}(f,f)$ (see Section \ref{s:carleman}), taking inspiration from \cite{imbert2018decay,  imbert2019smooth, imbert2016weak, silvestre2016boltzmann} and using some of their estimates out of the box. We also borrow some Sobolev estimates on $Q(g,f)$ from the work of the AMUXY group such as \cite{amuxy2010regularizing, amuxy2011qualitative, amuxy2011bounded} (see Section \ref{s:sobolev}).  Whenever possible, we rely on these existing estimates; however, they are not on their own sufficient for the proof of \Cref{t:main}.  Hence, our proof requires new estimates on the collision operator in our mixed space $X^{k,n,m}$.

The restriction $\gamma > \max\{-3,-3/2-2s\}$ in our results comes from the use of the estimates of \cite{amuxy2010regularizing, amuxy2011qualitative, amuxy2011bounded}. (See in particular Theorem \ref{t:amuxy-sobolev-v} below.) The restriction $\gamma < 0$ comes from our estimate of the nonlocal diffusion term $Q_s$, where the pointwise decay from $|v-v'|^\gamma$ is needed to control the tail of the integral (see Step Four of the proof of Proposition \ref{p:Q_estimates}(i) (Section \ref{s:Qs-proof}).

\subsection{Outline of the paper}

In \Cref{s:prelim}, we compile known results and relatively easy-to-deduce estimates on the collision operator and on interpolations between Sobolev spaces with weights.  In \Cref{s:main}, we state our main estimates (\Cref{p:Q_estimates}) on the collision operator $Q$, and then use these estimates to construct solutions.  These estimates are what take up the bulk of the effort in this proof and are established in \Cref{s:Q}.   Finally, we include in the appendix an inequality that relates integrals over somewhat complicated geometric spaces with their (weighted) $L^1$ norm.

\section{Preliminaries and known results}\label{s:prelim}


\subsection{The Carleman representation of $Q$}\label{s:carleman}

We make use of the Carleman representation of $Q$; that is, by adding and subtracting $f_1(v_*')f_2(v)$ inside the integral, we write
\[
	Q(f_1, f_2)
		= Q_{\rm s}(f_1,f_2) + Q_{\rm ns}(f_1,f_2),
\]
with 
\[ 
\begin{split}
Q_{\rm s}(f_1,f_2) &:= \int_{\R^3} \int_{\mathbb S^2} (f_2(v') - f_2(v)) f_1(v_*') B(|v-v_*|,\cos\theta) \dd \sigma \dd v_*,\\
Q_{\rm ns}(f_1,f_2) &:= f_2(v) \int_{\R^3}\int_{\mathbb S^{2}} (f_1(v_*') - f_1(v_*)) B(|v-v_*|,\cos\theta) \dd \sigma \dd v_*.
\end{split}
\]
Here, ``s'' stands for singular, that is, the smoothing operator, and ``ns'' stands for nonsingular, that is, the lower order multiplication operator term. These names are justified by the following re-formulations of $Q_{\rm s}$ and $Q_{\rm ns}$. The formula for $Q_{\rm s}$ used below was established by Silvestre in \cite{silvestre2016boltzmann}, using Carleman's change of variables \cite{carleman1933boltzmann}. (See also \cite{alexandre20003d} for earlier, related formulas.) The form of $Q_{\rm ns}$ (Lemma \ref{l:Qns} below) essentially follows from the cancellation lemma of  \cite{alexandre2000entropy}.
\begin{lemma}\label{l:Qs}{\cite[Lemma 4.1]{silvestre2016boltzmann}}
The term $Q_{\rm s}(f_1,f_2)$ can be written
\begin{equation}
Q_{\rm s}(f_1,f_2)(v)
		= \pv \int_{\R^3} K_{f_1}(v,v')[f_2(v') - f_2(v)]\dd v',
\end{equation}
where ``$\pv$'' denotes the principal value and the kernel $K_{f_1}(v,v')$ is defined by
\begin{equation*}
K_{f_1}(v,v') := \frac{1}{|v-v'|^{3 + 2s}} \int_{(v'-v)^\perp} f_1(v+w)|w|^{\gamma + 2s + 1} \tilde b(\cos(\theta)) \dd w,
\end{equation*}
and $\cos(\theta/2) = |w|/\sqrt{|v-v'|^2 + |w|^2}$.
\end{lemma}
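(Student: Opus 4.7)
The proof proceeds by Carleman's change of variables, in which the integration in $Q_{\rm s}$ is reparametrized from $(v_*, \sigma) \in \R^3 \times \bbS$ to $(v', v_*')$ constrained by the orthogonality relation $(v'-v) \cdot (v_*' - v) = 0$ (which, as a direct computation from \eqref{e:Q'} confirms, is an identity rather than a constraint imposed by hand). Parametrizing this codimension-one surface as $v_*' = v + w$ with $w \in (v'-v)^\perp$ and integrating $w$ against the two-dimensional Hausdorff measure on that plane, the measure $d\sigma\, dv_*$ pushes forward to some Jacobian factor times $dw\, dv'$. This change of variables is tailor-made for $Q_{\rm s}$ because the factor $(f_2(v') - f_2(v))$ in the integrand already depends only on $v'$; pulling the $v'$-integration to the outside then leaves the $w$-integration and the $f_1$-dependence packaged precisely as a kernel $K_{f_1}(v,v')$.

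The plan has four concrete steps. \textbf{Step 1 (geometry).} Verify the orthogonality $(v'-v)\perp(v_*'-v)$ from the explicit formulas \eqref{e:Q'}, and derive the Pythagorean identity $|v-v_*|^2 = |v'-v|^2 + |w|^2$ together with the half-angle relation $\cos(\theta/2) = |w|/\sqrt{|v'-v|^2+|w|^2}$, which will let the angular kernel $b(\cos\theta)$ be rewritten as a function of $(|v'-v|, |w|)$. \textbf{Step 2 (Jacobian).} Compute the Jacobian of the change $(v_*,\sigma) \leftrightarrow (v', w)$; the cleanest route is a delta-function identity of the form $d\sigma\, dv_* = c\, |v-v_*|^{-2} |v-v'|^{-1}\, \delta\bigl((v'-v)\cdot(v_*'-v)\bigr)\, dv'\, dv_*'$, where one then uses the delta to reduce $dv_*'$ to the two-dimensional measure $dw$ on $(v'-v)^\perp$. \textbf{Step 3 (collecting powers).} Rewrite $|v-v_*|^\gamma b(\cos\theta)$ using step 1 as a power of $(|v'-v|^2+|w|^2)$ times an angular piece; multiplying by the Jacobian from step 2, the combinations of $(|v'-v|^2+|w|^2)^\alpha$ collapse into a modified bounded angular factor $\tilde b(\cos\theta)$, leaving the singular prefactor $|v-v'|^{-3-2s}|w|^{\gamma+2s+1}$ in front. \textbf{Step 4 (assembly).} Since $(f_2(v')-f_2(v))$ does not depend on $w$, the $dw$-integration acts only on $f_1(v+w)|w|^{\gamma+2s+1}\tilde b(\cos\theta)$; this is exactly the definition of $K_{f_1}(v,v')$, and the remaining $dv'$-integration produces the claimed formula.

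The main obstacle is the careful bookkeeping in step 3: one must track contributions to the final powers of $|v-v'|$ and $|w|$ from three separate sources---the original $|v-v_*|^\gamma$ factor, the angular singularity $b(\cos\theta) \approx \theta^{-2-2s}$ (which under the half-angle identity becomes a fractional power of $|v-v'|/\sqrt{|v-v'|^2+|w|^2}$), and the Jacobian---and verify that every factor that is not a pure power of $|v-v'|$ or $|w|$ can be absorbed into a bounded angular function $\tilde b$. A secondary subtlety is that, when $s \geq 1/2$, the original $Q_{\rm s}$ integral in $\sigma$ is only defined as a principal value, and one must check that this transfers correctly to the principal value in $v'$ on the right-hand side; this is natural because the grazing limit $\theta \to 0$ corresponds, under the change of variables, exactly to $v' \to v$.
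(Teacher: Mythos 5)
The paper offers no proof of this lemma at all --- it is quoted verbatim from Silvestre \cite[Lemma 4.1]{silvestre2016boltzmann} --- and your outline (Carleman change of variables exploiting the automatic orthogonality $(v'-v)\perp(v_*'-v)$, the Pythagorean/half-angle identities, absorption of the bounded angular factors into $\tilde b$, and the identification of the grazing limit $\theta\to 0$ with $v'\to v$ to transfer the principal value) is precisely the argument used in that reference, so this is essentially the same approach and the plan is sound. The one slip is the homogeneity of your guessed Jacobian in Step 2: in dimension $3$ the correct identity is $\dd\sigma\,\dd v_* = \frac{4}{|v-v_*|}\,\delta\bigl((v'-v)\cdot(v_*'-v)\bigr)\,\dd v'\,\dd v_*'$, i.e.\ $\frac{4}{|v-v_*|\,|v-v'|}\,\dd v'\,\dd w$ after resolving the delta (not $|v-v_*|^{-2}|v-v'|^{-1}\delta(\cdots)$), and only with this factor does the bookkeeping of Step 3 produce exactly $|v-v'|^{-3-2s}|w|^{\gamma+2s+1}$, with the powers of $\cos(\theta/2)$ staying bounded after the standard reduction to $\theta\in[0,\pi/2]$.
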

Note that here, we used the assumption~\eqref{e:B} to write $b(\cos(\theta)) = |\theta|^{-2-2s} \tilde b(\cos(\theta))$ for a bounded, positive function $\tilde b$. In general, we abuse notation and omit the ``$\pv$'' in our notation below (see the discussion in \Cref{s:ideas}). We have the following estimates for $K_{f_1}$:


\begin{lemma}\label{l:kernel_bounds}
For any measurable $h: \R^3 \to \R$, $r>0$, and $v\in \R^3$,
\[\begin{split}
	&\int_{B_{2r}(v) \setminus B_r(v)} |K_h(v,v')| \dd v'
		\lesssim r^{-2s} \left( \int_{\R^3} |h(z)| |v-z|^{\gamma + 2s} \dd z\right) \qquad \text{ and}\\
	&\int_{\R^3 \setminus B_r(v')} K_h(v,v') \dd v
		\lesssim r^{-2s} \left( \int_{\R^3\setminus B_r(v)} |h(z)| |z-v'|^{\gamma + 2s} \dd z\right).
\end{split}\]
Also, denoting $K_h' = K_h(v',v)$ and fixing any $\e > 0$, we find, for any $\alpha \in [0,1]$,
\[
	\left|\pv \int_{\R^3} (K_h  - K_h') \dd v'\right|
		\lesssim \int_{\R^3} |h(z)||v - z|^{\gamma} \dd z,
\]
and, for all $v, v' \in \R^3$,
\[
	|K_h - K_h'|
		\lesssim  \frac{\|\langle\cdot\rangle^{(2 + (\gamma+2s + 1)_+ +\e} h\|_{C^\alpha}}{|v-v'|^{3 + 2s - \alpha}}\left(\vv^{\gamma + 2s + 1} + \vvp^{\gamma + 2s + 1}\right).
\]
\end{lemma}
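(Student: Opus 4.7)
My plan centers on exploiting a Carleman-type change of variables $(u,w) \mapsto (z,u)$, where $u = v'-v$ and $z = v+w$. The key geometric fact is that, as $v'$ ranges over $\R^3$ and $w$ ranges over the plane $(v'-v)^\perp$, the pair $(z,u)$ ranges over $\{(z,u) : u \perp (z-v)\}$, and the 5-dimensional measure $\dd u\, \dd w_{u^\perp}$ is carried to $(|u|/|z-v|)\, \dd z\, \dd u_{(z-v)^\perp}$. I would verify this Jacobian $|u|/|z-v|$ directly by testing the two parametrizations against isotropic products $f(|u|)g(|w|)$. Once established, bounding $\tilde b \lesssim 1$ reduces (i) to
\begin{equation*}
\int_{B_{2r}(v)\setminus B_r(v)} |K_h(v,v')|\,\dd v'
\lesssim \int_{\R^3} |h(z)|\,|z-v|^{\gamma+2s+1} \cdot \frac{1}{|z-v|} \cdot 2\pi\!\!\int_r^{2r}\! \rho^{-1-2s}\,\dd\rho\, \dd z,
\end{equation*}
where the annular integral on the plane $(z-v)^\perp$ gives $\lesssim r^{-2s}/|z-v|$, producing the stated bound. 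For (ii), the roles of $v$ and $v'$ are swapped, so the change of variables $(u,w) \mapsto (z,u)$ now yields the constraint $u\cdot(z-v') = |u|^2$ (a sphere through the origin) rather than orthogonality. I would decompose $\{|u|\geq r\}$ into dyadic annuli $\{2^j r \leq |u| < 2^{j+1} r\}$, apply the same argument on each, and sum the resulting geometric series in $j$ (using $\gamma + 2s < 0$ or the decay in the outer $|h|$ to control tails) to recover the $r^{-2s}$ scaling.

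For (iii), the identity $(v'-v)^\perp = (v-v')^\perp$ lets me combine the two kernels into a single integral:
\begin{equation*}
K_h(v,v') - K_h(v',v) = |v-v'|^{-3-2s}\!\int_{(v'-v)^\perp} \!\!\bigl(h(v+w) - h(v'+w)\bigr)|w|^{\gamma+2s+1}\tilde b(\cos\theta)\,\dd w.
\end{equation*}
Symmetrizing under $u \mapsto -u$ (with $u = v'-v$, which preserves the integration plane) replaces the integrand with $2h(v+w) - h(v+u+w) - h(v-u+w)$, yielding the necessary cancellation at $u=0$. To avoid imposing regularity on $h$, I would instead recognize the PV integral as (a non-trivial rearrangement of) the Cancellation Lemma of Alexandre--Desvillettes--Villani--Wennberg: undoing the Carleman transform gives $\int \int_{S^2} [h(v_*') - h(v_*)] B\, \dd\sigma\, \dd v_*$, which equals $C(h * |\cdot|^\gamma)(v)$, directly yielding the desired bound $\int |h(z)|\,|v-z|^\gamma\,\dd z$.

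For (iv), I start from the same representation of $K_h - K_h'$ and use the weighted H\"older bound
\begin{equation*}
|h(v+w) - h(v+u+w)| \lesssim |u|^\alpha \|\langle\cdot\rangle^\ell h\|_{C^\alpha}\bigl(\langle v+w\rangle^{-\ell} + \langle v+u+w\rangle^{-\ell}\bigr),
\end{equation*}
with $\ell = 2 + (\gamma+2s+1)_+ + \eps$. The extra power $2$ ensures 2D integrability on the plane $u^\perp$ after absorbing the weight; the $(\gamma+2s+1)_+$ absorbs the singular factor $|w|^{\gamma+2s+1}$ when $|w|$ is large; and the $\eps$ gives a strict decay margin. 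Splitting the plane into $\{|w| \leq 2(|v|+|v'|)\}$ and its complement, and using $|v+w| \geq |v|/2$ or $|v'+w| \geq |v'|/2$ on the far region, produces the weight factor $\langle v\rangle^{\gamma+2s+1} + \langle v'\rangle^{\gamma+2s+1}$. Combined with the prefactor $|u|^{-3-2s}$ and the H\"older gain $|u|^\alpha$, this yields the stated $|u|^{\alpha-3-2s}$ denominator.

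The principal obstacle I foresee is the Jacobian verification in (ii), where the non-planar (spherical) geometric constraint makes the change of variables less standard than in (i), and correctly handling the dyadic tail sum at $|u| \to \infty$ using the $\gamma+2s < 0$ assumption. A secondary delicate point is ensuring the cancellation lemma invocation in (iii) carries through without hidden regularity requirements on $h$; I will cross-reference the presentation in Silvestre~\cite{silvestre2016boltzmann} and Imbert--Silvestre~\cite{imbert2019smooth} to confirm these manipulations are valid under our hypotheses.
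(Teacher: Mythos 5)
Your route differs from the paper's mainly in what is proved versus cited: for the first three bounds the paper simply invokes \cite[Lemmas 3.4 and 3.5]{imbert2016weak}, and for the fourth it records only the observation that $K_h-K_h'$ is a single integral of $h(v+w)-h(v'+w)$ over the common plane $(v-v')^\perp$ --- exactly the representation you use --- leaving the H\"older/weight bookkeeping implicit. Your re-derivations are sound where they are carried out: part (i) is the standard planar computation (the swap identity $\int\!\int_{u^\perp}\dd w\,\dd u=\int\!\int_{w^\perp}(|u|/|w|)\,\dd u\,\dd w$ is precisely the tool behind the cited lemma), and part (iii), obtained by undoing the Carleman transform and invoking the cancellation lemma so that the principal value becomes $(S*h)(v)$ with $S=C_S|\cdot|^\gamma$, is in substance how the cited estimate is proved. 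Part (iv) coincides with the paper's argument; the one detail to watch is the case $\gamma+2s+1<0$, where your near-region step ``$|w|^{\gamma+2s+1}\lesssim$ sup over $\{|w|\le 2(|v|+|v'|)\}$'' goes the wrong way, and the decaying weight $\vv^{\gamma+2s+1}+\vvp^{\gamma+2s+1}$ must instead come from playing the factor $\langle v+w\rangle^{-\ell}$ against $|w|^{\gamma+2s+1}$ on the plane; your choice $\ell=2+(\gamma+2s+1)_++\e$ does make this work, but not by the split as literally described.

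The one place the sketch has not actually closed the argument is (ii), and you correctly flag it. After the change of variables the fiber over a fixed $z$ is the sphere $u\cdot(z-v')=|u|^2$, and the co-area computation on that sphere with weight $|u|^{-2-2s}|w|^{\gamma+2s+1}$, $|w|^2=|z-v'|^2-|u|^2$, is the entire content of the cited lemma; ``apply the same argument on each annulus'' does not substitute for it. Moreover, both fallback hypotheses you mention are unavailable: $h$ is merely measurable with no decay assumed, and $\gamma+2s<0$ is false in the paper's range (only $\gamma<0$ and $\gamma>-3/2-2s$ are assumed, so $\gamma+2s\in(-3/2,2)$ may well be positive). Neither is needed: once a per-annulus bound of the form $(2^jr)^{-2s}\int_{|z-v'|\ge 2^jr}|h(z)|\,|z-v'|^{\gamma+2s}\dd z$ is in hand, the factors $2^{-2sj}$ alone sum the series. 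So to complete (ii) you must either carry out the spherical co-area computation or do what the paper does and quote \cite{imbert2016weak}; with that supplied, your proposal is a correct, self-contained alternative to the paper's citation-based proof, at the cost of reproving known lemmas.
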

\begin{proof}
The first three inequalities are simply \cite[Lemmas 3.4 and 3.5]{imbert2016weak}.  The fourth inequality follows by the observation that
\[
	K_h - K_h'
		= |v-v'|^{-(3 + 2s)} \int_{(v-v')^\perp} (h(\xi - v) - h(\xi - v')) |\xi|^{\gamma + 2s + 1} \tilde b(\cos(\theta)) \dd \xi.
\]
\end{proof}

For the lower-order term $Q_{\rm ns}$, we have:
\begin{lemma}\label{l:Qns}
{\cite[Lemmas 5.1 and 5.2]{silvestre2016boltzmann}}
The term $Q_{\rm ns}(f_1,f_2)$ can be written
\begin{equation}
Q_{\rm ns}(f_1,f_2) = f_2 (S* f_1), 
\end{equation}  
where $S(v) := C_S |v|^\gamma$ and $C_{S}>0$ depends only on $s$.
\end{lemma}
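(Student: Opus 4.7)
The factorization $Q_{\rm ns}(f_1,f_2)(v) = f_2(v)\,J(v)$ with
$$J(v) := \int_{\R^3}\int_{\mathbb S^2}\bigl(f_1(v_*') - f_1(v_*)\bigr)B(|v-v_*|,\cos\theta)\dd\sigma\dd v_*$$
is immediate from the definition, so the real content of the lemma is the identity $J = S*f_1$ with $S(v) = C_S|v|^\gamma$ and $C_S > 0$ depending only on $s$ and $\gamma$. The plan is to invoke (or, for completeness, to rederive) the cancellation lemma of Alexandre--Desvillettes--Villani--Wennberg~\cite{alexandre2000entropy}, and then to check that for the product kernel $B(r,\cos\theta) = r^\gamma b(\cos\theta)$ the resulting convolution kernel has the claimed scale-invariant form.

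To rederive the cancellation identity, I would fix $\sigma \in \mathbb S^2$ and, in the $f_1(v_*')$-integral, substitute $v_* \mapsto v_*'$. Setting $u = v - v_*$ and $w = v - v_*' = (u + |u|\sigma)/2$, a direct computation gives
$$|w| = |u|\cos(\theta/2), \qquad \det\tfrac{\partial w}{\partial u} = \tfrac{1}{4}\cos^2(\theta/2),$$
where $\theta$ is the angle between $u$ and $\sigma$; the image of this map is the hemisphere $\{w\cdot\sigma > 0\}$, on which the angle between $w$ and $\sigma$ is $\theta/2$. In the $f_1(v_*)$-integral I would simply rename $u$ to $w$ and use the symmetry $\sigma \mapsto -\sigma$ of the underlying collision to also realize that integral over the hemisphere. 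Exchanging orders of integration and passing to spherical coordinates about $\hat w$ collapses the angular integral to a one-dimensional one in $\theta$, yielding
$$J(v) = \int_{\R^3} f_1(v-w)\,S(w)\dd w, \qquad S(z) = C_S\,|z|^\gamma,$$
where $C_S$ is, up to a $2\pi$ factor, the integral of $\sin\theta\cdot[\cos^{-\gamma-3}(\theta/2)-1]\cdot b(\cos\theta)$ over the appropriate range of $\theta$. Finiteness and positivity of $C_S$ use exactly $\gamma > -3$ and $s \in (0,1)$: near $\theta = 0$ the bracket is $\sim \frac{\gamma+3}{8}\theta^2$ while $b(\cos\theta) \lesssim \theta^{-2-2s}$, so the integrand is bounded by $\theta^{1-2s}$, which is integrable.

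The main delicacy is that each of the two integrals comprising $J$ diverges individually due to the grazing singularity of $b$, so the change of variables is only legitimate after the two contributions have been subtracted. To make this rigorous I would first truncate $b$ to $b_\eps := b\cdot\mathds{1}_{\theta > \eps}$, carry out the computation above for the integrable kernel $b_\eps$ (where every step is classical and each integral is absolutely convergent), and then send $\eps \to 0$ in the final formula, using dominated convergence against the integrable envelope $\theta^{1-2s}$ to pass to the limit inside $C_S$. This principal-value justification is fully consistent with the conventions discussed in \Cref{s:ideas} and is the only real subtlety in the argument.
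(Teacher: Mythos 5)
Your derivation is correct, and it is essentially the paper's own route: the paper simply quotes \cite[Lemmas 5.1 and 5.2]{silvestre2016boltzmann}, noting that the formula ``essentially follows from the cancellation lemma'' of \cite{alexandre2000entropy}, and your argument is precisely that cancellation-lemma computation (change of variables $v_*\mapsto v_*'$ at fixed $\sigma$ with Jacobian $\tfrac14\cos^2(\theta/2)$, collapse of the angular integral, and the resulting constant $C_S\propto\int \sin\theta\,[\cos^{-\gamma-3}(\theta/2)-1]\,b(\cos\theta)\,\dd\theta$, finite and positive exactly because $\gamma>-3$ and $s<1$), made self-contained. The truncation $b_\eps$ and passage to the limit is the right way to handle the principal-value issue, consistent with the conventions the paper adopts.
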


\subsection{Sobolev estimates on $Q(g,f)$.}\label{s:sobolev}

\begin{theorem}\cite[Proposition 2.9]{amuxy2011qualitative}  \label{t:amuxy-sobolev-v}
Let $0< s< 1$ and $0 > \gamma > \max\left\{-3,-\frac 3 2 - 2s\right\}$. Then for any $n \in \R$ and $k \in [-1,0]$, there holds
\[
	\|Q(g,f)\|_{H^{k,n}_v(\R^3)} \lesssim \left( \|g\|_{L^{1,n_+ +(\gamma+2s)_+}_v(\R^3)} + \| g \|_{L^2_v(\R^3)} \right)
		\|f\|_{H^{\max\left\{k+2s,(2s-1+\e)_+\right\},(n+\gamma+2s)_+}_v(\R^3)},
\]
for any $f$ and $g$ such that the right-hand side is finite.
In addition, we have that for any $\theta \in [-1,0]$
and any $g$, $f$, $h$, that
\[
	\Big|\int Q(g,f) h \dd v \Big|
		\lesssim \left( \|g\|_{L^{1,(\gamma+2s)_+}(\R^3)} + \| g \|_{L^2_v(\R^3)} \right)
			\|f\|_{H^{\max\{\theta+2s,(2s-1+\e)_+\}, (\gamma+2s)_+}(\R^3)} \|h\|_{H^{-\theta}(\R^3)}.
\]
\end{theorem}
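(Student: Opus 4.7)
My plan is to decompose $Q = Q_s + Q_{ns}$ via the Carleman representation (\Cref{l:Qs} and \Cref{l:Qns}) and estimate each piece in the weighted Sobolev space $H^{k,n}_v$ separately. For the lower-order multiplier $Q_{ns}(g,f) = f \cdot (S*g)$ with $S(v)=C_S|v|^\gamma$, since $\gamma \in (-3/2, 0)$ in our range, one can split $S*g(v)$ into contributions from $\{|v - z| \leq 1\}$ (where Cauchy--Schwarz applies because $|v|^{2\gamma}\in L^1_{\rm loc}$) and $\{|v - z| > 1\}$ (where Young's inequality produces a weighted $L^1$ norm of $g$). This yields the pointwise bound $|S*g(v)| \lesssim \vv^\gamma(\|g\|_{L^{1,(\gamma)_+}} + \|g\|_{L^2})$. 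Multiplying by $f$ and applying a standard weighted Leibniz rule in $H^k$ (or its dual for $k<0$) comfortably gives the $Q_{ns}$ estimate with slack to spare.

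The main content is the $Q_s$ estimate, where $Q_s(g,f)$ is treated as a variable-coefficient nonlocal operator of order $2s$ applied to $f$, with kernel $K_g$ bounded as in \Cref{l:kernel_bounds}. I would dualize and write, for a smooth test function $h$,
\[
\int Q_s(g,f) h \dd v = \iint K_g(v,v')[f(v')-f(v)] h(v)\dd v'\dd v,
\]
and then symmetrize in $v \leftrightarrow v'$ so that, up to a drift term involving $\pv\int (K_g(v,v') - K_g(v',v))\dd v'$ (controlled by the third bound of \Cref{l:kernel_bounds}), the right-hand side becomes the quadratic form
\[
-\tfrac{1}{2}\iint K_g(v,v')[f(v')-f(v)][h(v')-h(v)]\dd v'\dd v.
\]
This is the natural $\dot{H}^s \times \dot{H}^s$ pairing for a L\'evy-type operator of order $2s$. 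The hypothesis $\gamma + 2s > -3/2$ is used in a critical way: it guarantees $|v-z|^{\gamma+2s} \in L^2_{\rm loc}(\R^3)$, so that the weighted $L^1$ bound for $K_g$ on annuli (first bound of \Cref{l:kernel_bounds}) can be traded off with an $L^2$ bound on $g$ near the singularity. Interpolating the regularity distribution between $f$ and $h$, one can trade a symmetric $\dot{H}^s \times \dot{H}^s$ pairing for any $H^{k+2s}(f) \times H^{-k}(h)$ with $k \in [-1,0]$, yielding the desired regularity exponent on $f$; the floor $(2s-1+\eps)_+$ arises from the drift correction, which only controls first-order differences in $f$.

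The weights are brought in by commuting $\vv^n$ past the nonlocal operator: writing $\vv^n Q_s(g,f) = Q_s(g,\vv^n f) + R$, where $R$ arises from $[\vv^n, K_g]$. Using $|\vv^n - \vvp^n|\lesssim |v-v'|(\vv + \vvp)^{n-1}$, the remainder $R$ is of order at most $2s-1$ (one power lower in the singularity), and tracking the $|w|^{\gamma+2s+1}$ factor in the kernel representation shows that the commutator costs at most $(\gamma+2s)_+$ extra moments of $f$, which accounts for the final weight $(n + \gamma + 2s)_+$ on the right-hand side. The second inequality of the theorem is then a direct consequence of the first with $n=0$ and $k=\theta$ via the $H^\theta$--$H^{-\theta}$ duality pairing. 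The main obstacle I anticipate is the careful bookkeeping in the $s \geq 1/2$ regime: because $b(\cos\theta)$ is not integrable and $Q$ is defined only in the principal-value sense, preserving the sharp regularity exponent $\max\{k+2s,(2s-1+\eps)_+\}$ forces one to track each cancellation between gain and loss terms precisely, avoiding any spurious $\eps$ loss in the drift estimates when interpolating the quadratic form against $H^{-k}$ for $k$ close to $-1$.
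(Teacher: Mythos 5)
Note first that the paper does not prove this statement at all: it is imported verbatim from \cite[Proposition 2.9]{amuxy2011qualitative}, with the second bound assembled from \cite[Proposition 2.1]{amuxy2011qualitative} and \cite[Theorem 2.1]{amuxy2010regularizing}, so you are attempting a from-scratch proof of a result whose known proofs occupy a substantial portion of those papers. Parts of your outline are fine: for $\Qns$, since $k\le 0$ the $H^{k,n}_v$ norm is dominated by the $L^{2,n}_v$ norm, so the pointwise bound $|S*g(v)|\lesssim \vv^{\gamma}(\|g\|_{L^{1,|\gamma|}}+\|g\|_{L^2})$ (valid because $2\gamma>-3$) already suffices and no Leibniz rule is needed; and deducing the second inequality from the first with $n=0$, $k=\theta$ by the $H^{\theta}$--$H^{-\theta}$ duality is correct. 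The gap is in the $\Qs$ estimate, which is the heart of the theorem.

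The step ``interpolating the regularity distribution between $f$ and $h$, one can trade a symmetric $\dot H^s\times\dot H^s$ pairing for any $H^{k+2s}(f)\times H^{-k}(h)$ with $k\in[-1,0]$'' is not a valid argument; it is precisely the content to be proved. Interpolation needs two endpoint estimates in hand, while the symmetrized quadratic form only yields the single balanced point $k=-s$ (via Cauchy--Schwarz and an estimate of type \cite[Theorem 4.1]{imbert2016weak}). At $k=0$ you need $\|\Qs(g,f)\|_{L^{2,n}}\lesssim(\cdots)\|f\|_{H^{2s,\cdot}}$, i.e.\ all $2s$ derivatives on $f$; for $s\ge 1/2$ this requires a second-order Taylor expansion and cancellation inside the principal value and cannot be read off the symmetric form. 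At $k=-1$ you must transfer a full derivative onto $h$, which generates commutators of $\partial_v$ with $K_g$ (equivalently differences $K_g-K_g'$); the only tool for these in \Cref{l:kernel_bounds} requires a weighted $C^\alpha$ norm of $g$, which the theorem does not grant you --- only $L^1$-with-moments and $L^2$ norms of $g$ appear, and this restriction is exactly what makes the result delicate (the cited works handle it with dyadic decompositions in frequency and relative velocity, not with pointwise kernel bounds). The weight commutation is likewise waved through: the bound $|\vv^n-\vvp^n|\lesssim|v-v'|(\vv+\vvp)^{n-1}$ is only useful for $|v-v'|\lesssim\vv$, and the far region is where the $n_+$ moments in $\|g\|_{L^{1,n_++(\gamma+2s)_+}}$ must be produced; this is the content of the nontrivial commutator estimate \Cref{p:commutator}, not a one-line remainder bound. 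As written, your argument establishes (modulo details) only a weighted $k=-s$ case; for the full range $k\in[-1,0]$ you would have to prove the two endpoints by the frequency-space methods of the cited papers, or simply cite them, as this paper does.
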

We note that the second estimate in \Cref{t:amuxy-sobolev-v} follows from \cite[Proposition 2.1]{amuxy2011qualitative}, which proves the estimate
for the collision kernel localized to the low relative velocity regime ($|v-v'| \lesssim 1$), and \cite[Theorem 2.1]{amuxy2010regularizing}, which handles the remainder ($|v-v'| \gtrsim 1$).  

Finally, we state an estimate on the commutator of weights and the collision operator.

\begin{proposition}\cite[Proposition 2.8]{amuxy2011qualitative}\label{p:commutator}
Let $0 < s < 1$ and $0\geq\gamma$.  For any $\ell \in \R$ and $\e>0$,
	\[
		\int h(\vv^\ell Q(f,g) - Q(f,\vv^\ell g)) \dd v
			\lesssim \|f\|_{L^{2,\ell + 3/2 + (2s-1)_+ + \e}}\|g\|_{H^{(2s-1+\e)_+,\ell + (2s-1)_+}} \|h\|_{L^2}.
	\]
\end{proposition}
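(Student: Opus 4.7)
The plan is to bound the commutator $\vv^\ell Q(f,g) - Q(f, \vv^\ell g)$ by directly expanding the collision operator and exploiting the angular cancellation inherent in the non-cutoff kernel. After expansion, the loss terms cancel, leaving
\[
 \vv^\ell Q(f,g)(v) - Q(f, \vv^\ell g)(v)
    = \iint f(v_*')g(v')[\vv^\ell - \vvp^\ell] B(|v-v_*|, \cos\theta) \dd \sigma \dd v_*.
\]
Testing against $h$, the goal is to estimate the trilinear integral $I := \iiint h(v) f(v_*') g(v') [\vv^\ell - \vvp^\ell] B \dd \sigma \dd v_* \dd v$, and I would split the angular variable into a grazing part ($|\theta| \leq \theta_0$, say $\theta_0 \sim \langle v-v_*\rangle^{-1}$) and a non-grazing remainder, treating each separately.

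For the non-grazing part, the angular kernel is integrable, so I would use the elementary bound $|\vv^\ell - \vvp^\ell| \lesssim |v-v'|(\vv^{\ell-1} + \vvp^{\ell-1})$ together with $|v-v'| \lesssim |v-v_*| \sin(\theta/2)$ and the energy conservation $\vv^2 + \vvs^2 = \vvp^2 + \vvs'^2$. The pre-post collisional change of variables $(v,v_*,\sigma) \leftrightarrow (v', v_*', \sigma')$ symmetrizes the integral; a Cauchy--Schwarz in $(v, v_*)$ then yields a bound of the form $\|f\|_{L^{2,\ell+a}} \|g\|_{L^{2,\ell+b}} \|h\|_{L^2}$, with $a = 3/2 + \e$ arising from integrating $|v-v_*|^\gamma$ against a $\vvs^{-3/2-\e}$ weight in dimension three.

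For the grazing part, I would Taylor expand $\vv^\ell - \vvp^\ell = \nabla W_\ell(v) \cdot (v-v') + O(|v-v'|^2(\vv^{\ell-2}+\vvp^{\ell-2}))$ where $W_\ell(v) := \vv^\ell$. The key cancellation comes from parametrizing $\sigma = \cos\theta\, \hat u + \sin\theta\, \omega$ with $\omega \in S^1 \subset (v-v_*)^\perp$, so that $v'-v = \tfrac{|v-v_*|}{2}[(\cos\theta - 1)\hat u + \sin\theta\, \omega]$; the $\omega$-integration annihilates the dangerous linear-in-$\sin\theta$ term, leaving only the radial $O(\theta^2)$ piece. This converts the $|\theta|^{-2-2s}$ singularity into an integrable $|\theta|^{-2s}$. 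When $s \geq 1/2$, the Taylor remainder alone is borderline, and one must further transfer $(2s-1+\e)$ fractional derivatives from $g(v')$ to the kernel via a Bessel-potential / fractional-integration identity; this is the source of the $H^{(2s-1+\e)_+}$ requirement on $g$. The main obstacle is executing the cancellation cleanly while tracking how the angular and $v_*$ integrations distribute weights: the $3/2 + \e$ on the $f$-norm must come out of integrating $|v-v_*|^\gamma$ against $\vvs^{-3/2-\e}$, and the regularity transfer must consume exactly $(2s-1+\e)_+$ derivatives---no more, or else the weight on $g$ would exceed $\ell + (2s-1)_+$.
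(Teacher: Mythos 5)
The paper does not actually prove this proposition: it is quoted verbatim from \cite[Proposition 2.8]{amuxy2011qualitative}, so the only comparison available is with the AMUXY proof, which proceeds through a rather delicate dyadic analysis in the angle and the relative velocity rather than a two-line Taylor expansion. Your setup is fine as far as it goes: the cancellation of the loss terms, the reduction to the trilinear integral with the factor $\vv^\ell-\vvp^\ell$, the grazing/non-grazing split with threshold $\theta_0\sim\langle v-v_*\rangle^{-1}$, the pre/post-collisional change of variables, and the transfer of weights to $f(v_*')$ via conservation of energy are all standard and sound, and for $s<1/2$ the crude bound $|\vv^\ell-\vvp^\ell|\lesssim |v-v'|(\vv^{\ell-1}+\vvp^{\ell-1})$ alone essentially closes the estimate (consistently with $(2s-1)_+=0$ there).

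The genuine gap is in the grazing part for $s\geq 1/2$, which is precisely the case this paper needs. The claimed annihilation of the linear Taylor term by the $\omega$-integration is not available here: writing $v'-v=\tfrac{|v-v_*|}{2}[(\cos\theta-1)\hat u+\sin\theta\,\omega]$, the $\omega$-average kills the $\sin\theta\,\omega$ contribution only when the rest of the integrand is independent of $\omega$, but in your trilinear integral the factors $g(v')$ and $f(v_*')$ both depend on $\sigma$ (hence on $\omega$), so the first-order term survives. Your own conclusion betrays the inconsistency: if the linear term really vanished, the remainder would be $O(\theta^2)$, integrable against $|\theta|^{-2-2s}\sin\theta\,\dd\theta$ for every $s\in(0,1)$, and no regularity of $g$ would be required at all, contradicting the $H^{(2s-1+\e)_+}$ norm in the statement. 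Handling the surviving first-order term is the whole difficulty: one cannot expand $f(v_*')$ or $h$ (both are only weighted $L^2$), so the $\theta^{1-2s}$ divergence must be absorbed entirely by regularity of $g$, and the phrase ``transfer $(2s-1+\e)$ fractional derivatives from $g(v')$ to the kernel via a Bessel-potential identity'' is exactly the step that is not carried out; in \cite{amuxy2011qualitative} this is where the proof becomes technical (dyadic decompositions in $\theta$ and $|v-v_*|$ together with their symbolic-calculus machinery). As written, the argument does not close for $s\geq 1/2$, so it cannot substitute for the cited result in the range of parameters the paper is designed to cover.
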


\subsection{Interpolation}



\begin{lemma}\label{l:moment_interpolation}
	Fix $n \geq 0$ and $m \geq 0$.  Suppose that $f \in L^{\infty,m}\cap H^{k,n}(\R^3)$ and $k' \in (0,k)$.  Then if $\ell < (m-3/2)(1- k'/k) + n(k'/k)$, we have
	\[
		\|f\|_{H^{k',\ell}}
			\lesssim \|f\|_{L^{\infty,m}}^{1 - \frac{k'}{k}} \|f\|_{H^{k,n}}^{\frac{k'}{k}}
	\]
\end{lemma}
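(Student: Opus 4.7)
The plan is to reduce the weighted Sobolev interpolation to an unweighted Gagliardo--Nirenberg inequality applied on a smooth dyadic decomposition in the velocity variable. Fix a smooth partition of unity $\{\chi_j\}_{j \geq 0}$ on $\R^3$ with $\chi_0 \in C_c^\infty(B_2)$, $\supp \chi_j \subset \{2^{j-1} \leq |v| \leq 2^{j+1}\}$ for $j \geq 1$, and $\|\partial_v^\gamma \chi_j\|_{L^\infty(\R^3)} \lesssim 2^{-j|\gamma|}$, and set $f_j := f\chi_j$. The crucial geometric fact is that $\vv \sim 2^j$ throughout $\supp \chi_j$, so weighted and unweighted Sobolev norms on each piece differ only by predictable powers of $2^j$.

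I would then derive, uniformly in $j$, the following four auxiliary estimates: (a) $\|f_j\|_{L^2} \lesssim 2^{j(3/2 - m)}\|f\|_{L^{\infty,m}}$, by combining the pointwise bound $|f_j| \lesssim \vv^{-m}\|f\|_{L^{\infty,m}}$ with the $O(2^{3j})$ volume of the annulus; (b) $\|f_j\|_{H^{k,n}} \lesssim \|f\|_{H^{k,n}}$, since multiplication by $\chi_j$ is a uniformly bounded operator on $H^{k,n}$; (c) $\|f_j\|_{H^k} \lesssim 2^{-jn}\|f_j\|_{H^{k,n}}$; and (d) $\|f_j\|_{H^{k',\ell}} \lesssim 2^{j\ell}\|f_j\|_{H^{k'}}$. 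Applying the standard Gagliardo--Nirenberg inequality $\|f_j\|_{H^{k'}} \lesssim \|f_j\|_{L^2}^{1-\theta}\|f_j\|_{H^k}^{\theta}$ with $\theta := k'/k$, and combining with (a)--(d), yields
\[
\|f_j\|_{H^{k',\ell}} \lesssim 2^{jc}\,\|f\|_{L^{\infty,m}}^{1-\theta}\,\|f\|_{H^{k,n}}^{\theta}, \qquad c := \ell - (m - 3/2)(1 - \theta) - n\theta,
\]
and the hypothesis on $\ell$ forces $c < 0$.

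The proof then concludes by summing: since $\sum_{j \geq 0} 2^{jc}$ is a convergent geometric series, the triangle inequality $\|f\|_{H^{k',\ell}} \leq \sum_{j \geq 0} \|f_j\|_{H^{k',\ell}}$ gives the claimed bound.

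The main technical obstacle is the verification of items (c) and (d), i.e.\ the near-equivalence of weighted and unweighted Sobolev norms on a single dyadic piece. This is handled by a Leibniz expansion together with the pointwise bound $|\partial^\gamma \vv^a| \lesssim \vv^{a - |\gamma|} \sim 2^{j(a - |\gamma|)}$ on $\supp \chi_j$, inducting on the derivative order to absorb the lower-order commutator terms. The other two estimates (a) and (b) are routine consequences of volume counting and the Leibniz rule, respectively.
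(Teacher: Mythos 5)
Your proof is correct, and it follows the same guiding idea as the paper (localize to regions where the weight $\vv$ is essentially constant, apply unweighted Sobolev interpolation there, and exploit the strict inequality on $\ell$ to recombine), but the implementation is genuinely different. The paper covers $\R^3$ by unit balls $B_2(z)$, $z\in\Z^3$, interpolates locally on each ball, then closes the sum with Young's inequality with a free parameter $\e$ and optimizes, which reduces matters to the embedding $\|f\|_{L^{2,m'}}\lesssim\|f\|_{L^{\infty,m}}$ for $m'<m-3/2$; this is exactly where its $3/2$ loss enters. You instead use a dyadic annular (Littlewood--Paley in $v$) decomposition, apply the global Plancherel-type interpolation $\|f_j\|_{H^{k'}}\lesssim\|f_j\|_{L^2}^{1-\theta}\|f_j\|_{H^k}^{\theta}$ on each piece, and sum a geometric series directly, with the $3/2$ loss coming from the $2^{3j/2}$ volume factor in your estimate (a) --- the same mechanism in different clothing. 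Your route avoids the Young/optimization step entirely and makes the role of the strict inequality $\ell<(m-3/2)(1-k'/k)+nk'/k$ transparent (it is precisely $c<0$), at the mild cost of an $\ell^1$ rather than $\ell^2$ summation over pieces, which the geometric decay absorbs. One small caveat: the lemma is applied in the paper with fractional exponents (e.g.\ $k'=4+\theta+\e$, $k-2(1-s)$), so your Leibniz-plus-induction verification of (b)--(d) should be supplemented by the standard fact that multiplication by smooth functions with uniformly bounded derivatives (here $\chi_j$ and the rescaled weights $2^{\mp j a}\vv^{\pm a}$ on the annulus) is bounded on fractional Sobolev spaces, which follows by interpolating the integer-order estimates; this is routine and does not affect the argument.
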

\begin{proof}
Notice that
\[
	\|f\|_{H^{k',\ell}}^2
		\approx \sum_{z \in \Z^3} \|\vv^\ell  f\|_{H^{k'}(B_2(z))}^2
		\approx \sum_{z\in\Z^3} \langle z \rangle^{2\ell} \| f\|_{H^{k'}(B_2(z))}^2.
\]
Fix $\epsilon>0$ to be chosen.  Let $m' < m-3/2$ be such that $\ell = m' (1-k'/k) + n(k'/k)$.    Using standard interpolation between Sobolev spaces, we find
\[\begin{split}
	\|f\|_{H^{k',\ell}}^2
		&\lesssim \sum_{z\in\Z^3} \langle z \rangle^{2\ell} \| f\|_{L^{2}(B_2(z))}^{2\left(1 - \frac{k'}{k}\right)}\| f\|_{H^{k}(B_2(z))}^\frac{2k'}{k}
		\lesssim \sum_{z\in\Z^3} \langle z \rangle^{2m'\left(1 - \frac{k'}{k}\right) + \frac{2nk'}{k}} \| f\|_{L^{\infty}(B_2(z))}^{2\left(1 - \frac{k'}{k}\right)}\| f\|_{H^{k}(B_2(z))}^\frac{2k'}{k}\\
		&\approx \sum_{z\in\Z^3}  \|\vv^{m'} f\|_{L^{2}(B_2(z))}^{2\left(1 - \frac{k'}{k}\right)}\|\vv^n f\|_{H^{k}(B_2(z))}^\frac{2k'}{k}\\
		&\lesssim \sum_{z\in \Z^3} \big(\e^{\frac{k}{k-k'}}  \|\vv^{m'} f\|_{L^2(B_2(z))}^2 + \e^{-k'/k} \| f\|_{H^{k,n}( B_2(z))}^2\big)
		\approx \e^{\frac{k}{k-k'}}  \|\vv^{m'} f\|_{L^2}^2 + \e^{-k'/k} \| f\|_{H^{k,n}}^2.
\end{split}\]
Optimizing in $\e$ leads to $\|f\|_{H^{k',\ell}} \lesssim \| f\|_{L^{2,m'}}^{1 - \frac{k'}{k}}  \| f\|_{H^{k,n}}^\frac{k'}{k}.$ 
The proof is complete as $\|f\|_{L^{2,m'}} \lesssim \|f\|_{L^{\infty,m}}$.
\end{proof}

\section{Proof of the main theorem}\label{s:main}

In order to establish our main theorem, we first construct a solution to the linear problem
\[
	(\partial_t + v\cdot \nabla_x) f
		= Q(g,f)
\]
for a fixed $g$.  We then use an iteration argument to find a fixed point where $g=f$.  In both steps we require estimates on $Q$, which are stated in the following subsection.

\subsection{Main estimates on $Q$}

The bulk of our argument is in estimating various quantities involving $Q$.  We state these estimates now, but their proof is in \Cref{s:Q}.

\begin{proposition}\label{p:Q_estimates}
	Fix any $\e>0$,  let $\eta = 3/2 + (\gamma+2s)_+ + \e$, and suppose that $n > \eta$.  We have the following estimates:
	\begin{enumerate}[(i)]
		\item Suppose that $m > n + 3/2 + \gamma + \e$, $\alpha \in [0,1)\cap (2s-1,1)$, and $f,g: \R^3 \to \R$.  Then
			\begin{equation*}
			\|\Qs(g,f)\|_{L^{2,n}}
				\lesssim \left(\|f\|_{L^{\infty,m}}
						+ \|\vv^{n + \eta + 2} D_v f\|_{C^\alpha}\right)
					\|g\|_{L^{2,n}}.
			\end{equation*}
			
		\item If $f,g: \R^3 \to \R$,
			\[
				\|\Qns(g,f)\|_{L^{2,n}}
					\lesssim \|f\|_{L^{\infty, n + \eta}}\|g\|_{L^{2,n}}.
			\]
		\item If  $f: \R^3 \to \R$ and $g: \R^3 \to [0,\infty)$ and $m>2n + 3 + \gamma$,
			\[
				\int \vv^{2n} f Q(g,f) \dd v
					\lesssim \| f \|_{L^{2,n}}^2 \| g \|_{L^{\infty,m}}.
			\]
		\item Let $\partial = \partial_{x_i}$ or $\partial_{v_i}$ for some $i = 1,2,3.$  Then, if $\alpha \in ((2s-1)_+,s)$ and  $f,g: \T^3\times\R^3 \to \R$, 
			\[\begin{split}
				\int \vv^{2n} Q(g, f) \partial f \dd v \dd x
					&\lesssim \left(\|g\|_{L^\infty_x L_v^{2,n + \eta + (2s-1)_+}} + \|\vv^{5/2 + \eta} g\|_{L^\infty_xC_v^\alpha} + \|\partial g\|_{L^\infty_x L_v^{2,\eta}}\right)\\
						&\qquad \cdot \left(\|f\|^2_{H^{s, n + 3/2 + \eta}} + \|f\|^2_{H^{1,n}}\right).
			\end{split}\]
		\item If $m > 3 + \gamma + 2s$ and $f = \vv^{-m}$, then
			\[
				\|Q(g,f)\|_{L^{\infty,m}}
					\lesssim \|g\|_{L^{\infty,m}}.
			\]
	\end{enumerate}
\end{proposition}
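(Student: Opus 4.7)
The plan is to prove the five estimates in turn, relying throughout on the Carleman decomposition $Q=\Qs+\Qns$, the kernel estimates of Lemma \ref{l:kernel_bounds}, and (where applicable) the commutator bound of Proposition \ref{p:commutator}. The common thread is to split the velocity integration into a near-collision piece where smoothness of $f$ compensates the singular kernel, and a far-collision piece where pointwise decay of $f$ makes the kernel integrable against a suitable norm on $g$. I expect part (iv) to be the main obstacle.

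For part (i), I would write $\Qs(g,f)(v)=\pv\int K_g(v,v')[f(v')-f(v)]\,\dd v'$ and split $\R^3$ into $\{|v'-v|\le 1\}$ and $\{|v'-v|>1\}$. On the near set, the principal value kills the first-order Taylor contribution, so $|f(v')-f(v)-(v'-v)\cdot D_vf(v)|\lesssim |v'-v|^{1+\alpha}\vv^{-n-\eta-2}\|\vv^{n+\eta+2}D_vf\|_{C^\alpha}$; dyadically summing the first bound of Lemma \ref{l:kernel_bounds} gives an $L^2_v$ estimate linear in $\|g\|_{L^{2,n}}$, using $\alpha>2s-1$ for integrability at zero. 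On the far set, bound $|f(v')-f(v)|\lesssim \|f\|_{L^{\infty,m}}(\vv^{-m}+\vvp^{-m})$ and apply Lemma \ref{l:kernel_bounds} again; the hypothesis $m>n+3/2+\gamma+\e$ ensures the resulting weighted convolution with $g$ lies in $L^{2,n}$. Part (v) is essentially a corollary: with $f=\vv^{-m}$, both $D_vf$ and $D_v^2f$ decay at rate $\vv^{-m-1}$, so the same near/far splitting combined with $\Qns(g,\vv^{-m})=C_S\vv^{-m}(S*g)$ from Lemma \ref{l:Qns} produces the pointwise bound $|Q(g,\vv^{-m})(v)|\lesssim \vv^{-m}\|g\|_{L^{\infty,m}}$ (requiring $m>3+\gamma+2s$ so that the convolution $\int|v-z|^\gamma\vvs^{-m}\,\dd v_*$ converges).

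Part (ii) is essentially immediate from Lemma \ref{l:Qns}: $\Qns(g,f)=C_Sf(v)(|\cdot|^\gamma*g)(v)$; splitting the convolution at $|v-v_*|=1$, using the Peetre-type bound $|v-v_*|^\gamma\le\vvs^{|\gamma|}\vv^\gamma$ on the far set and integrability of $|v-v_*|^\gamma$ on the near set (from $\gamma>-3$), followed by Cauchy--Schwarz in $v_*$ against $\vvs^{2n}$, yields the stated bound once we collect the $\|f\|_{L^{\infty,n+\eta}}$ factor. For part (iii), split $Q=\Qs+\Qns$. The $\Qns$ piece gives $\int\vv^{2n}f^2(S*g)\,\dd v\le\|S*g\|_{L^\infty}\|f\|_{L^{2,n}}^2\lesssim\|g\|_{L^{\infty,m}}\|f\|_{L^{2,n}}^2$. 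For the $\Qs$ piece, substitute $\varphi=\vv^nf$ and symmetrize the resulting double integral via $(v,v')\mapsto(v',v)$: the leading term becomes $-\tfrac12\iint K_g(v,v')(\varphi(v')-\varphi(v))^2\,\dd v\,\dd v'\le 0$, and the remaining asymmetry terms involving $|K_g-K_g'|$ and $|\vv^n\vvp^{-n}-1|$ are controlled using the third bound of Lemma \ref{l:kernel_bounds} together with the pointwise estimate $|\vv^n\vvp^{-n}-1|\lesssim n|v-v'|\min(\vv,\vvp)^{-1}$ in the near regime, giving a residue $\lesssim\|g\|_{L^{\infty,m}}\|f\|_{L^{2,n}}^2$ once $m>2n+3+\gamma$.

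The main obstacle is part (iv). For $\partial=\partial_{x_i}$, the derivative commutes with $Q$, so integrating in $x$ and applying Cauchy--Schwarz reduces the task to the coercive smoothing of $\Qs$: using Lemma \ref{l:Qs} and the symmetrization of the previous paragraph, the quadratic form $\int\vv^{2n}\Qs(g,f)\partial_{x_i}f\,\dd v\,\dd x$ produces an $H^s$-type seminorm of $f$ weighted by the kernel, absorbed by Young's inequality into $\|f\|_{H^{s,n+3/2+\eta}}^2$ with the prefactor $\|g\|_{L^\infty_xL^{2,n+\eta+(2s-1)_+}_v}$. For $\partial=\partial_{v_i}$ the commutator $[\partial_{v_i},Q]$ is nontrivial; writing $\partial_{v_i}Q(g,f)=Q(\partial_{v_i}g,f)+Q(g,\partial_{v_i}f)+[\partial_{v_i},Q](g,f)$ (the commutator coming from the $v_*$-shift in the Carleman integrand), the $Q(\partial_{v_i}g,f)$ term yields the $\|\partial g\|_{L^\infty_xL^{2,\eta}_v}$ contribution via Theorem \ref{t:amuxy-sobolev-v}, and the commutator residue is controlled by the fourth bound of Lemma \ref{l:kernel_bounds} with Hölder exponent $\alpha\in((2s-1)_+,s)$, producing the $\|\vv^{5/2+\eta}g\|_{L^\infty_xC^\alpha_v}$ contribution. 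The hard part is the bookkeeping: matching the exponent $n+3/2+\eta$ on the $H^s$ norm and $5/2+\eta$ on the $C^\alpha$ norm requires careful tracking of the kernel's $\vv^{\gamma+2s+1}$ growth under each weight-shift in the near and far pieces.
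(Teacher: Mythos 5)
Your sketches of (ii) and (v) are fine, but there are genuine gaps in (i), (iii) and (iv), and they are precisely the points where the paper has to work hardest. In (i), splitting at $|v-v'|=1$ and bounding $|f(v')-f(v)|\lesssim \|f\|_{L^{\infty,m}}(\vv^{-m}+\vvp^{-m})$ on the far set does \emph{not} close: in the regime $|v|$ large and $v'$ near the origin, $\vvp^{-m}$ gives no decay, the weight $\vv^{n}$ is uncompensated, and the crude kernel bound only yields a pointwise estimate of order $\vv^{n+\gamma}\|g\|_{L^{2,\eta}}\|f\|_{L^{\infty,m}}$, which is not square-integrable in $v$. This is exactly Step Four of the paper's proof, where one must Cauchy--Schwarz through the hyperplane structure of $K_g$, transfer the full weight $\vv^{2n}$ onto $g$ (which is why the conclusion carries $\|g\|_{L^{2,n}}$), and invoke the geometric Lemma \ref{l:henderson-problem} plus the sign condition $\gamma<0$ to make the radial integral $\int_{10}^{\infty}\rho^{2(\gamma+\e)-1}\dd\rho$ converge; none of this machinery (nor any substitute for it) appears in your plan, and the claim that ``$m>n+3/2+\gamma+\e$ ensures the weighted convolution lies in $L^{2,n}$'' is simply false in that region. (Even in the regime where both $|v|,|v'|$ are large your cut at $|v-v'|=1$ forfeits the $\vv^{-2s}$ gain of the paper's $|v|$-scaled annuli and would force $m>n+3/2+\gamma+2s$, a weaker but still real mismatch with the stated hypothesis.)

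In (iii), the weight-commutator term $\iint K_g\,F(v)F(v')(\vv^{n}\vvp^{-n}-1)$ cannot be handled as you propose: near the diagonal the first-order bound $|\vv^{n}\vvp^{-n}-1|\lesssim|v-v'|\vv^{-1}$ leaves a kernel of size $|v-v'|^{-2-2s}$, non-integrable for $s\ge 1/2$ (the very range the paper is after), so one needs the second-order Taylor expansion together with the symmetry that kills the linear term (the paper's \eqref{e:prop_3_1_angles}), and the piece involving $F(v')$ must be absorbed into the coercive Dirichlet form $-\tfrac12 D$, which uses $g\ge0$; far from the diagonal $|\vv^{n}\vvp^{-n}-1|$ is unbounded and your sketch says nothing about it. In (iv), your starting identity is off: $\partial_{v_i}Q(g,f)=Q(\partial_{v_i}g,f)+Q(g,\partial_{v_i}f)$ exactly, by translation invariance in $v$, so there is no commutator ``from the $v_*$-shift''; the relevant commutator is the weight commutator $\vv^{n}Q(g,f)-Q(g,\vv^{n}f)$ (Proposition \ref{p:commutator}). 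Moreover there is no coercivity to appeal to, since $g$ is not assumed nonnegative in (iv); the mechanism that keeps only $s$ derivatives on $f$ is the symmetrization of $\int \Qs(g,F)\partial F$ into $\iint K_g(F'-F)(\partial F-(\partial F)')$, followed by integration by parts that moves the derivative onto $g$ (producing the $\|\partial g\|$ term via $K_{\partial g}$), a diagonal/off-diagonal splitting, and the H\"older kernel-difference bound of Lemma \ref{l:kernel_bounds} for the antisymmetric remainder. Your ``bookkeeping'' comment understates this: without these structural steps the argument does not go through.
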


The estimates above are not necessarily sharp (particularly in the weights required); however, stating the sharp form obtained in the sequel is cumbersome and unnecessary for our main well-posedness result.  We also note that \Cref{p:Q_estimates}.(ii) is a consequence of \cite[Lemma 2.1]{HST2018landau}, so we omit its proof (indeed, we can write $Q(g,f) \approx \bar c[g]f$ in the notation of~\cite{HST2018landau}).  The remaining estimates are proved in \Cref{s:Q}.

To our knowledge, with the exception of \Cref{p:Q_estimates}.(ii), these estimates are new, and existing estimates do not suffice for our purposes.  In general existing estimates, such as those found in~\cite{amuxy2010regularizing, amuxy2011qualitative, amuxy2011bounded} cover or are tailored to settings where $f$ has Gaussian decay.

\subsection{Solving the linear equation}


In order to show that the linear problem has a solution, we use the method of continuity.  In order to do this, we work with a regularized equation that we formulate now.  Let $\chi: \R^3\to \R$ be a smooth-cutoff function such that $0 \leq \chi \leq 1$, $\chi \equiv 1$ on $B_{1/2}$, $\chi \equiv 0$ on $B_1^c$, and $\int \chi(v) \dd v = 1$.  For any $\e,\delta\geq 0$, we define the following quantities.  First, for any $\phi : \T^3\times \R^3 \to \R$, let
\[
	\phi^\e(x,v) = \frac{1}{\e^6} \int \chi\left( \frac{x-y}{\e}\right) \chi\left( \frac{v-w}{\e}\right) \phi(y,w) \dd y \dd w \qquad \text{ if } \e >0,
\]
and $\phi^\e = \phi$ if $\e = 0$.  Then, let the regularized collision operator be defined by
\[
	Q_{\e,\de}(g(x,\cdot),f(x,\cdot))(v)
		= \chi(\de v) Q(g^\e(x,\cdot), \chi(\de \cdot) f)
		\qquad \text{ for any } (x,v) \in \T^3\times \R^3.
\]
Finally, for any $\sigma \in [0,1]$, we define the differential operator
\[
	\cL_{\sigma, \e,\de}(f)
		= \partial_t f + \sigma \chi(\de v) v \cdot \nabla_x f - (\e + (1-\sigma)) \Delta_{x,v}f - \sigma Q_{\e,\de}(g,f).
\]

Here, $\sigma$ is the parameter in the method of continuity; it connects the linear Boltzmann equation and the heat equation.  The parameter $\e$ smoothes the initial data and $g$ and provides additional coercivity.  Its purpose is to allows us to work in a smooth setting where we can apply Schauder estimates up to $\{t=0\}$.  The parameter $\delta$ cuts off large velocities and allows us to sidestep issues with moment generation when we obtain estimates in order to apply the method of continuity.  The necessity for two regularization parameters is technical and has to do with the fact that the important estimate~\eqref{e:a_priori2} may not hold for $\delta>0$ (case four of its proof relies on certain symmetries that are broken when $\de>0$).

We now establish \emph{a priori} estimates that hold for both the full equation and the regularized one above. This is done in the following proposition.

\begin{proposition}\label{p:a_priori}
	Suppose that $T>0$, $k \geq 5$, $n> 3/2 + (\gamma+2s)_+$, $\sigma \in [0,1]$, $\e,\de \geq 0$, and $m \geq 0$.  Suppose that $R,f \in Y_T^{k,n,m}$ for some $m$ and satisfy
	\begin{equation}\label{e:linear_interpolation}
	\begin{cases}
		\cL_{\sigma, \e, \delta} f = R
			\qquad &\text{ in } (0,T) \times \T^3\times\R^3,\\
		f(0,\cdot,\cdot) = f_{\rm in} 
			\qquad &\text{ on } \T^3\times \R^3.
	\end{cases}
	\end{equation}
	For any $\mu>0$, if $\delta = 0$ and $m \geq 3/2 + \mu$ or if $\delta > 0$, then, for all $t_0 \in [0,T]$,
	\begin{equation}\label{e:a_priori1}
	\begin{split}
		\|f\|_{L^{\infty,m}([0,t_0]\times\T^3\times\R^3)}
			\leq &\exp\Big\{C\int_0^{t_0} \|g(t)\|_{L^{\infty,\max\{m,3/2 + \mu\}}} \dd t\Big\}\\
				&\qquad \Big( \|f_{\rm in}\|_{L^{\infty,m}(\T^3\times\R^3)}	+  \int_0^{t_0} \|R(t)\|_{L^{\infty,m}(\T^3\times\R^3)} \dd t\Big).
	\end{split}
	\end{equation}
	If $\de = 0$ and $m$ is sufficiently large depending on $k$, $n$, $\gamma$, and $s$,
	\begin{equation}\label{e:a_priori2}
	\begin{split}
		\|f\|^2_{L^\infty_t H_{x,v}^{k,n}([0,t_0]\times \T^3\times \R^3)}
			&\leq \exp\Big\{C\int_0^{t_0} \|g\|_{L^\infty_t X^{k,n,m}_{x,v}([0,t]\times\T^3\times\R^3)} \dd t\Big\}\\
			&\qquad  \Big( \|f_{\rm in}\|^2_{X^{k,n,m}} + C t_0 \|f_{\rm in}\|^2_{L^{\infty,m}}
				+ C\int_0^{t_0} \|R(t)\|^2_{X^{k,n,m}(\T^3\times\R^3)} \dd t\Big).
	\end{split}
	\end{equation}
	Here $C$ is a universal constant depending only on $T$, $\gamma$, $s$, $k$, $n$, and $m$.  If $m \leq 3/2$, it additionally depends on $\delta$ in~\eqref{e:a_priori1}.
\end{proposition}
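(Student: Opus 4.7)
The plan is to prove \eqref{e:a_priori1} via a pointwise comparison-principle argument at the supremum of $\vv^m f$, then to prove \eqref{e:a_priori2} by a weighted energy estimate in which Proposition~\ref{p:Q_estimates} controls the collision term and Lemma~\ref{l:moment_interpolation} compensates the moment loss.

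For \eqref{e:a_priori1}, I work at a point $(t,x_0,v_0)$ at which $M(t):=\sup_{x,v}\vv^m f$ is attained (this supremum is attained thanks to the decay coming from $f\in Y^{k,n,m}_T$, and, when $\de>0$, from the velocity cutoff $\chi(\de v)$). At such a point, $\nabla_x(\vv^m f)=0$ forces $v\cdot\nabla_x f=0$, and $\Delta_{x,v}(\vv^m f)\le 0$ combined with the explicit derivatives of $\vv^m$ gives $(\e+1-\sigma)\vv^m\Delta_{x,v}f\le CM(t)$. To treat the collision term, I set $\psi:=f-M(t)\vv^{-m}$, which satisfies $\psi\le 0$ everywhere and $\psi(v_0)=0$. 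The Carleman representation of Lemma~\ref{l:Qs} then yields $\Qs(g,f)(v_0)\le M(t)\,\Qs(g,\vv^{-m})(v_0)$, while Lemma~\ref{l:Qns} gives the corresponding identity for $\Qns$. Proposition~\ref{p:Q_estimates}(v) provides $\vv^m Q(g,\vv^{-m})(v_0)\le C\|g\|_{L^{\infty,m}}$ once $m$ is large; for smaller $m\ge 3/2+\mu$ I control $Q$ directly through convolution with $|v|^\gamma$, which contributes the $\|g\|_{L^{\infty,3/2+\mu}}$ factor. The mirror argument at the infimum of $\vv^m f$, the contribution from $R$, and Gronwall conclude \eqref{e:a_priori1}.

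For \eqref{e:a_priori2}, I carry out a weighted energy estimate derivative by derivative. For each multi-index $\alpha$ with $|\alpha|\le k$, I apply $\partial^\alpha=\partial_x^{\alpha_x}\partial_v^{\alpha_v}$ to the equation, multiply by $\vv^{2n}\partial^\alpha f$, and integrate over $\T^3\times\R^3$. The transport commutator $[\partial^\alpha,v\cdot\nabla_x]$ contributes $O(\|f\|^2_{H^{k,n}})$; the Laplacian term integrates by parts to yield a non-positive piece plus $O(\|f\|^2_{H^{k,n}})$ from derivatives of the weight; and Proposition~\ref{p:commutator} handles $[\vv^{2n},Q]$. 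The collision term expands via Leibniz into a top-order piece $\int\vv^{2n}Q(g,\partial^\alpha f)\partial^\alpha f$ and intermediate pieces $\int\vv^{2n}Q(\partial^\beta g,\partial^{\alpha-\beta} f)\partial^\alpha f$ with $0<\beta\le\alpha$. For the top-order piece I use Proposition~\ref{p:Q_estimates}(iii) when $|\alpha|=0$, and when $|\alpha|\ge 1$ I factor $\partial^\alpha=\partial\cdot\partial^{\alpha-e}$ and apply Proposition~\ref{p:Q_estimates}(iv); both estimates exploit symmetry to avoid moment loss. For each intermediate piece I invoke Theorem~\ref{t:amuxy-sobolev-v}, which costs $(\gamma+2s)_+$ moments, and I absorb this loss using Lemma~\ref{l:moment_interpolation} to interpolate between $L^{\infty,m}$ and $H^{k,n}$. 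The quantitative largeness of $m$ is chosen precisely so that every occurring interpolation satisfies the constraint $\ell<(m-3/2)(1-k'/k)+n(k'/k)$.

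The main obstacle lies in converting the resulting interpolated bound $\|f\|_{L^{\infty,m}}^{1-k'/k}\|f\|_{H^{k,n}}^{k'/k}$ into a usable Gronwall input. Applying Young's inequality gives $\e\|f\|^2_{H^{k,n}}+C_\e\|f\|^2_{L^{\infty,m}}$; the first term is absorbed into the left-hand side, while the second is controlled through \eqref{e:a_priori1} by $\exp(C\int_0^{t}\|g\|)(\|f_{\rm in}\|^2_{L^{\infty,m}}+\int\|R\|^2_{L^{\infty,m}})$. Integrating in time produces the $Ct_0\|f_{\rm in}\|^2_{L^{\infty,m}}$ contribution on the right-hand side of \eqref{e:a_priori2}. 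Summing the resulting estimates over $|\alpha|\le k$ and closing by Gronwall then yields \eqref{e:a_priori2}.
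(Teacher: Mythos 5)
Your proof of \eqref{e:a_priori1} is essentially the paper's argument: evaluating at a maximum point of $\vv^m f$ and comparing against the barrier $M(t)\vv^{-m}$ is an unpacked form of the comparison with the explicit supersolution built from \Cref{p:Q_estimates}.(v), and it goes through (modulo the minor point that the supremum of $\vv^m f$ need not be attained when $\de=0$, which is why comparing with a strict supersolution as the paper does is cleaner).

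The energy estimate \eqref{e:a_priori2}, however, has a genuine gap in how you distribute the Leibniz terms among the available estimates. First, you assign \Cref{p:Q_estimates}.(iv) to the top-order piece $\int\vv^{2n}Q(g,\partial^\alpha f)\,\partial^\alpha f$, but (iv) bounds $\int\vv^{2n}Q(g,F)\,\partial F$, i.e.\ it requires the multiplier to carry exactly one more derivative than the second slot of $Q$; taking $F=\partial^{\alpha-e}f$ produces $\int\vv^{2n}Q(g,\partial^{\alpha-e}f)\,\partial^\alpha f$, a different term. The all-derivatives-on-$f$ piece should be handled by \Cref{p:Q_estimates}.(iii) for every $\alpha$ (using $g\ge0$), not only for $|\alpha|=0$. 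More seriously, the term that (iv) is actually designed for --- exactly one derivative on $g$ and $k-1$ derivatives on $f$ --- is relegated in your plan to \Cref{t:amuxy-sobolev-v} plus \Cref{l:moment_interpolation}, and that route does not close: \Cref{t:amuxy-sobolev-v} places the $f$-factor in $H^{2s}_v$ with an extra $(\gamma+2s)$ weight, so with $k-1$ derivatives already on $f$ you would need $\|f\|_{H^{k-1+2s,\,n+\gamma+2s}}$; for $s\ge 1/2$ this sits at or above order $k$, and \Cref{l:moment_interpolation} only trades moments for regularity strictly below order $k$, so neither the excess regularity nor the excess weight can be recovered. Since $s\in[1/2,1)$ is exactly the new regime the result is meant to cover, the Gr\"onwall estimate fails to close where it matters. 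The same issue arises for the piece with all $k$ derivatives on $g$: \Cref{t:amuxy-sobolev-v} would require $\partial^k g\in L^{2,\,n+(\gamma+2s)_++3/2+\e}_v$, i.e.\ more moments than the $n$ available at top order, with no interpolation possible at order $k$. These two terms are precisely why the paper proves the dedicated symmetrized estimates: \Cref{p:Q_estimates}.(iv) for the one-derivative-on-$g$ case and \Cref{p:Q_estimates}.(i)--(ii) for the all-derivatives-on-$g$ case, which shift the moment burden onto the low-order factor. The remainder of your outline (interpolation for the genuinely intermediate cases, Young's inequality plus \eqref{e:a_priori1} to produce the $Ct_0\|f_{\rm in}\|^2_{L^{\infty,m}}$ term, then Gr\"onwall) matches the paper.
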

\begin{proof}
The estimates are exactly the same when $\e>0$ so, to make the notation less cumbersome, we only consider the case $\e =0$.  
We first establish the estimate on the $L^{\infty,m}$-norm of $f$.

\medskip

{\bf The $L^{\infty,m}$ bound, \eqref{e:a_priori1}.} The case $\delta >0$ is simpler than the case $\de = 0$ so, we consider only the case $\de = 0$ and omit the case $\de >0$.  It is enough to construct a super-solution.  Indeed, fix $t_0$ and let 
	\[
		A(t) = C \|g(t)\|_{L^{\infty,m}(\T^3 \times \R^3)},
	\]
	for $C$ to be determined, and
	\[
		\overline f(t,x,v) = e^{\int_0^t A(s) \dd s} \left(\|f_{\rm in}\|_{L^{\infty,m}} \vv^{-m} + \int_0^t \|R(s) \|_{L^{\infty,m}} \dd s\right).
	\]
	Clearly, $f_{\rm in} \leq \overline f(0)$.  Hence, by the comparison principle, we are finished if we show that
	\begin{equation}\label{e:c1}
		(\partial_t + \sigma v\cdot\nabla_x) \overline f
			- \sigma Q(g,\overline f) - (1-\sigma) \Delta_{x,v} \overline f - R \geq 0.
	\end{equation}
	This follows easily from \Cref{p:Q_estimates}.(v).  Indeed,
	\[
		(\partial_t + \sigma v\cdot\nabla_x) \overline f
			= A \overline f + e^{\int_0^t A(s) \dd s} \|R(t)\|_{L^{\infty,m}},
	\]
	and from \Cref{p:Q_estimates}.(v), we have $|Q(g,\overline f)|
			\lesssim \|g(t)\|_{L^{\infty,m}} \overline f.$ 
	In addition, a direct computation yields $\Delta_{x,v} \overline f \lesssim \overline f.$  Letting $C$ be the sum of these two implied constants, we obtain~\eqref{e:c1}.  Thus, $f \leq \overline f$ by the comparison principle.  This concludes the proof of~\eqref{e:a_priori1}.

\medskip

{\bf The $H^{k,n}$ bound, \eqref{e:a_priori2}.}    
Let $\alpha, \beta \in \N_0^3$ be any multi-indices such that $|\alpha| + |\beta| = k$.  Then, differentiating \eqref{e:linear_interpolation}, one has
\begin{equation}\label{e:differentiated_landau}
	\begin{split}
		\partial_x^\alpha \partial_v^\beta f_t
		&+ \sigma v\cdot \nabla_x \partial_x^\alpha \partial_v^\beta f
		+ \sigma \sum_{i=1}^3 \beta_i \partial_{x_i}\partial_x^\alpha\partial_v^{\beta - e_i} f\\
		&= \sigma \sum_{\substack{\alpha' + \alpha'' = \alpha,\\ \beta' + \beta'' = \beta}} Q( \partial_x^{\alpha'}\partial_v^{\beta'} g, \partial_x^{\alpha''} \partial_v^{\beta''} f)
			+(1-\sigma) \Delta_{x,v} \partial_x^\alpha \partial_v^\beta f
			+ \partial_x^\alpha \partial_v^\beta R.
	\end{split}
\end{equation}
Here $e_1 = (1,0,0)$, $e_2 = (0,1,0)$, and $e_3 = (0,0,1)$, and by $\partial_x^\alpha$ we mean $\partial_{x_1}^{\alpha_1} \partial_{x_2}^{\alpha_2}\partial_{x_3}^{\alpha_3}$.  The terms $\partial_v^\beta$ are defined similarly.

Fix any $t_0>0$.  Multiplying~\eqref{e:differentiated_landau} by $\vv^{2n} \partial_x^\alpha \partial_v^\beta f$ and integrating in $x$ and $v$ with $t$ fixed (for the remainder of this proof, all $x$ integrals are over $\mathbb T^3$, and all $v$ integrals are over $\R^3$), we find
\begin{equation}\label{e:linearized_energy}
\begin{split}
	\frac{1}{2} \ddt &\int |\vv^n \partial_x^\alpha \partial_v^\beta f|^2 \dd x \dd v
		= - \sigma\int \left(\sum_{i=1}^3 \beta_i \partial_{x_i}\partial_x^\alpha\partial_v^{\beta - e_i} f\right) \vv^{2n} \partial_x^\alpha \partial_v^\beta f \dd x \dd v \\
			&\qquad+ \sigma \sum_{\substack{\alpha' + \alpha'' = \alpha,\\ \beta' + \beta'' = \beta}} \int Q( \partial_x^{\alpha'}\partial_v^{\beta'} g, \partial_x^{\alpha''} \partial_v^{\beta''} f) \vv^{2n}  \partial_x^\alpha \partial_v^\beta f \dd x \dd v\\
			&\qquad- (1-\sigma) \int \left| \nabla_{x,v} \partial_x^\alpha \partial_v^\beta f\right|^2 \dd x \dd v
			+ \int  \partial_x^\alpha \partial_v^\beta R \vv^{2n} \partial_x^\alpha \partial_v^\beta f \dd x \dd v.
\end{split}
\end{equation}
The first term on the right hand side is clearly bounded by $\|f\|_{H^{k,n}}^2$, the third term has a good sign, and the last term is clearly bounded by $\|R(t)\|_{H^{k,n}}\|f(t)\|_{H^{k,n}} \lesssim \|R(t)\|_{H^{k,n}}^2 + \|f(t)\|_{H^{k,n}}^2$.  Hence, we focus our attention on the second term on the right hand side.  We claim that, for any $t$,
\begin{equation}\label{e:Q_key_estimate}
	\sum_{\substack{\alpha' + \alpha'' = \alpha,\\ \beta' + \beta'' = \beta}} \int Q( \partial_x^{\alpha'}\partial_v^{\beta'} g(t), \partial_x^{\alpha''} \partial_v^{\beta''} f(t)) \vv^{2n} \partial_x^\alpha \partial_v^\beta f \dd v \dd x
		\lesssim \|g(t)\|_{X^{k,n,m}} \|f(t)\|_{X^{k,n,m}}^2.
\end{equation}

We show how to conclude assuming that~\eqref{e:Q_key_estimate} is proved.  In this case,
\begin{equation}\label{e:energy_inequality1}
	\frac{1}{2} \ddt \int |\vv^n \partial_x^\alpha \partial_v^\beta f(t)|^2 \dd x \dd v
		\lesssim (\|g(t)\|_{X^{k,n,m}}+1)\|f(t)\|_{X^{k,n,m}}^2 + \|R(t)\|_{H^{k,n}}^2.
\end{equation}
Using the definition of $X$, we have
$
	\|f(t)\|^2_{X^{k,n,m}}
		= \|f(t)\|^2 _{H_{x,v}^{k,n}} + \|f(t)\|^2_{L^{\infty,m}}. 
$ 
Plugging this into~\eqref{e:energy_inequality1}, we find
\[\begin{split}
	\ddt \|f(t)\|^2_{H^{k,n}}
		\lesssim (\|g(t)\|_{X^{k,n,m}}+1)(\|f(t)\|^2 _{H_{x,v}^{k,n}} + \|f(t)\|^2_{L^{\infty,m}}) 
			+ \|R(t)\|^2_{H^{k,n}}.
\end{split}\]

With the above inequality, we can apply Gr\"onwall's inequality to conclude that 
\[
	\|f(t_0)\|_{H^{k,n}}^2
		\leq \exp\Big\{ C \int_0^{t_0} \|g(t)\|_{X^{k,n,m}} \dd t + Ct_0 \Big\}\Big( \|f_{\rm in}\|^2_{H^{k,n}} + C\int_0^{t_0} (\|R(t)\|^2_{H^{k,n}} + \|f(t)\|_{L^{\infty,m}}^2) \dd t\Big).		
\]
Then~\eqref{e:a_priori2} is established by  using~\eqref{e:a_priori1}.  Thus, the proof is concluded after establishing~\eqref{e:Q_key_estimate}.

\medskip

{\bf The proof of \eqref{e:Q_key_estimate}.} 
There are four major cases to consider, depending on the size of $\alpha'$, $\alpha''$, $\beta'$ and $\beta''$.  In each case, we consider $t$ to be fixed and omit it notationally.

One inequality that is used throughout is that, for any $\e>0$, $\ell\in \R$, and $h \in L^{2,\ell + 3/2+\e}$,
\begin{equation}\label{e:L1_L2}
	\|h\|_{L^{1,\ell}} \lesssim \|h\|_{L^{2,\ell + 3/2 + \e}}.
\end{equation}

\medskip

 {\bf Case one: $|\alpha'| + |\beta'| \leq k-1$ and $|\alpha''| + |\beta''| \leq k-2$.} 
First, we apply Cauchy-Schwarz to find
\[
	\int Q( \partial_x^{\alpha'}\partial_v^{\beta'} g, \partial_x^{\alpha''} \partial_v^{\beta''} f) \vv^{2n} \partial_x^\alpha \partial_v^\beta f \dd x \dd v
		\leq \|f\|_{H^{k,n}} \left(\int |\vv^n Q( \partial_x^{\alpha'}\partial_v^{\beta'} g, \partial_x^{\alpha''} \partial_v^{\beta''} f)|^2 \dd x \dd v\right)^{1/2}.
\]
From \Cref{t:amuxy-sobolev-v} and~\eqref{e:L1_L2}, we find that
\begin{equation}\label{e:QL2n}
\begin{split}
	\|Q(\partial_x^{\alpha'}\partial_v^{\beta'} g, & \partial_x^{\alpha''} \partial_v^{\beta''} f)\|_{L^{2,n}}^2
	= \int \| Q(\partial_x^{\alpha'}\partial_v^{\beta'} g, \partial_x^{\alpha''} \partial_v^{\beta''} f) \|_{L^{2,n}_v}^2 \dd x \\
		&\lesssim \int \left(\|\partial_x^{\alpha'}\partial_v^{\beta'} g\|_{L_v^{1,(\gamma+2s)_+}}^2
			+ \| \partial_x^{\alpha'}\partial_v^{\beta'} g \|_{L^2_v}^2 \right)
			\| \partial_x^{\alpha''} \partial_v^{\beta''} f\|_{H_v^{2s, n+\gamma + 2s}}^2 \dd x \\
		&\lesssim \int \|\partial_x^{\alpha'}\partial_v^{\beta'} g \|_{L_v^{2,(\gamma+2s)_+ +3/2 + \epsilon}}^2
			\| \partial_x^{\alpha''} \partial_v^{\beta''} f\|_{H_v^{2s, n+\gamma + 2s}}^2 \dd x.
\end{split}
\end{equation}
If $|\alpha'| + |\beta'| \leq k-2$, then we continue~\eqref{e:QL2n} with H\"older's inequality to get
\[
	\|Q(\partial_x^{\alpha'}\partial_v^{\beta'} g, \partial_x^{\alpha''} \partial_v^{\beta''} f)\|_{L^{2,n}}^2
		\lesssim \|\partial_x^{\alpha'}\partial_v^{\beta'} g \|_{L^\infty_x L^{2, (\gamma+2s)_+ +3/2+\epsilon}_v}^2
			\| \partial_x^{\alpha''} \partial_v^{\beta''} f\|_{H^{2s, n+\gamma + 2s}}^2.
\]
Using \Cref{l:moment_interpolation}, which we may apply as long as $m$ is sufficiently large because
$|\alpha''| + |\beta''| +2s \leq k - 2(1-s) < k$, we obtain
\[
\begin{split}
	\|Q(\partial_x^{\alpha'}\partial_v^{\beta'} g, \partial_x^{\alpha''} \partial_v^{\beta''} f)\|_{L^{2,n}}
		&\lesssim \|g\|_{L^\infty_x H^{k-2,(\gamma+2s)_+ + 3/2 + \e}_v} 
			\|f\|_{H^{k - 2(1-s), n+\gamma + 2s}}
		\lesssim \|g\|_{X^{k,n,m}} \|f\|_{X^{k,n,m}}.
\end{split}
\]
The last inequality is by the Sobolev embedding $H^2 \subset L^\infty$.

If, on the other hand, $|\alpha'| + |\beta'| = k-1$ (so that $\partial_x^{\alpha''} \partial_v^{\beta''} f = \partial f$),
then we continue~\eqref{e:QL2n} as follows.  Applying H\"older's inequality, we find
\[\begin{split}
	\|Q(\partial_x^{\alpha'}\partial_v^{\beta'} g, \partial_x^{\alpha''} \partial_v^{\beta''} f)\|_{L^{2,n}}^2
		&\lesssim \| g \|_{H^{k-1, (\gamma+2s)_++3/2+\e}}^2
			\| \partial f \|_{L^\infty_x H^{2s, n+\gamma+2s}}^2
		\lesssim \| g \|_{X^{k,n,m}}^2 \| f \|_{X^{k,n,m}}^2,
\end{split}\]
where we once again used the Sobolev embedding and \Cref{l:moment_interpolation}, with $m$ sufficiently large.

Thus we have
\[
	\int Q( \partial_x^{\alpha'}\partial_v^{\beta'} g, \partial_x^{\alpha''} \partial_v^{\beta''} f) \vv^{2n} \partial_x^\alpha \partial_v^\beta f \dd x \dd v
		\lesssim \|g\|_{X^{k,n,m}} \|f\|_{X^{k,n,m}}^2.
\]
which finishes the proof of case one.

\medskip

{\bf Case two: $|\alpha'| + |\beta'| = k$; i.e.\ $\alpha' = \alpha$ and $\beta' = \beta$.}
Using Cauchy-Schwarz just as we did above, it is enough to estimate the $L^{2,n}$-norm of the $Q$-term.  To this end, using \Cref{p:Q_estimates}.(i) and (ii), we find
\[
	\|Q(\partial_x^{\alpha} \partial_v^\beta g,f)\|_{L^{2,n}}
		\lesssim \|\partial_x^{\alpha} \partial_v^\beta g\|_{L^{2,n}} \big( \|f\|_{L^{\infty,m}} +  \|\vv^{n + 7/2 + \e + (\gamma+2s)_+} D_v f\|_{L^\infty_x C^\theta_v} \big),
\]
where $\theta \in ((2s-1)_+, 1)$.  Here we used that $m > n + 3/2 + \gamma + \e$.   In order to bound the $C^\theta$ norm of $D_v f$, we use \Cref{l:moment_interpolation} and the Sobolev embedding theorem.  Indeed, choosing $k' = 4 + \theta + \e$, where $\e$ is adjusted so that $k' < 5 \leq k$, we get
\[
	\|f\|_{H^{k', n + 7/2 + \e + \gamma}} \lesssim \|f\|_{X^{k,n,m}},
\]
as long as $m$ is chosen sufficiently large. 
Then, applying the Sobolev embedding theorem we  find
\[
	\|\vv^{n + 7/2 + \e + \gamma} D_v f\|_{L^\infty_x C^\theta_v}
		\leq \|f\|_{H^{k',n + 7/2 + \e + \gamma}}.
\]
Thus, we have
\[
	\int Q( \partial_x^{\alpha'}\partial_v^{\beta'} g, \partial_x^{\alpha''} \partial_v^{\beta''} f) \vv^{2n} \partial_x^\alpha \partial_v^\beta f \dd x \dd v
		\lesssim \|f\|_{X^{k,n,m}}^2 \|g\|_{X^{k,n,m}},
\]
which finishes the proof of case two.

\medskip

{\bf Case three: $|\alpha''| + |\beta''| = k$, i.e.\ $\alpha'' = \alpha$ and $\beta'' = \beta$.}
This case is handled directly by \Cref{p:Q_estimates}.(iii), which immediately yields
\[
	\begin{split}
	\int Q(g,\partial^\alpha_x \partial^\beta_v f) \vv^{2n} \partial_x^\alpha \partial_v^\beta f \dd v \dd x
		&\lesssim \int \| \partial_x^\alpha \partial_v^\beta f (x) \|_{L^{2,n}_v}^2 \| g(x) \|_{L^{\infty,m}_v} \dd x\\
		&\leq \| f \|_{H^{k,n}_{x,v}}^2 \| g \|_{L^{\infty,m}} \leq \| f \|_{X^{k,n,m}}^2 \| g \|_{X^{k,n,m}},
	\end{split}
\]
finishing the proof of case three.

\medskip

{\bf Case four: $|\alpha''| + |\beta''| = k-1$.}
Let $\partial$ be the derivative falling on $g$ instead of $\partial_x^{\alpha''} \partial_v^{\beta''} f$.  Then
$	\partial_x^{\alpha} \partial_v^{\beta} f
		= \partial \partial_x^{\alpha''} \partial_v^{\beta''} f$
and so we may apply \Cref{p:Q_estimates}.(iv).  Fixing any $\alpha \in ((2s-1)_+,s)$, we obtain
\[\begin{split}
	&\int Q(\partial g, \partial_x^{\alpha''} \partial_v^{\beta''} f) \vv^{2n} \partial \partial_x^{\alpha''} \partial_v^{\beta''} f \dd v \dd x\\
		&\lesssim \big(
				\|\partial g\|_{L^\infty_x L_v^{2,n + 3/2 + (2s -1)_+
				+ \e}} + \|\vv^{3 + (\gamma + 2s+1)_++\e}\partial g\|_{L^\infty_x C_v^\alpha} + \|\partial^2 g\|_{L^\infty_x L_v^{2,3/2 + (\gamma+2s)_+ + \e}}\big)\\
			&\qquad \cdot  \big(
				\|\partial_x^{\alpha''} \partial_v^{\beta''} f\|^2_{H^{s, n + 3 + (\gamma + 2s +1)_+ + \e}}
				+ \|\partial_x^{\alpha''} \partial_v^{\beta''} f\|^2_{H^{1,n}}\big)\\
		&\lesssim \big(
				\|g\|_{H^{5/2 + \e, n + 3/2 + (2s -1)_+ + \e}}
				+ \|g\|_{H^{4+\alpha, 3 + (\gamma + 2s+1)_+ + \e}}\big)  
			\big(
				\| f\|^2_{H^{(k - (1-s), n + 3 + (\gamma + 2s +1)_+ + \e}}
				+ \|f\|^2_{H^{k,n}}\big),
\end{split}\]
where in the second inequality we used the Sobolev embedding theorem.

Then, as long as $m$ is sufficiently large, \Cref{l:moment_interpolation} and the above inequality yield
\[
	\int Q(\partial g, \partial_x^{\alpha''} \partial_v^{\beta''} f) \vv^{2n} \partial \partial_x^{\alpha''} \partial_v^{\beta''} f \dd v \dd x\\
		\lesssim \|g\|_{X^{k,n,m}} \|f\|_{X^{k,n,m}}^2,
\]
which concludes the proof of case four, and, thus~\eqref{e:Q_key_estimate}.  Hence, the proof is complete.
\end{proof}

Having established the bounds above, we now construct a solution.  

\begin{proposition}\label{p:linear_existence}
	Fix $T>0$, $g \in Y_T^{k,n,m}$, and $f_{\rm in} \in X^{k,n,m}$.  Then there exists a unique solution $f \in Y_T^{k,n,m}$ such that
	\[
		(\partial_t + v\cdot\nabla_x) f = Q(g,f).
	\]
	and such that $f(0,\cdot,\cdot) = f_{\rm in}$.  Moreover, $f\geq 0$.
\end{proposition}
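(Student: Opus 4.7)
\emph{Proof plan.} My plan is to construct the solution at the regularized level $\cL_{\sigma, \eps, \de} f = 0$ in three stages: first build a smooth classical solution at fixed $\eps, \de > 0$ by the method of continuity in $\sigma$; then pass $\de \to 0^+$ holding $\eps > 0$; finally pass $\eps \to 0^+$. For the first stage, I would smooth the initial data to $f_{\rm in}^\eps$ and observe that $\cL_{0, \eps, \de} f = \partial_t f - (1 + \eps) \Delta_{x,v} f$ is a classical heat operator, so a smooth solution at $\sigma = 0$ exists. The set $S \subseteq [0,1]$ of $\sigma$ for which a smooth solution to $\cL_{\sigma, \eps, \de} f = 0$ exists is then open by a Banach contraction/implicit-function argument based on parabolic Schauder estimates for $\partial_t - (\eps + (1-\sigma))\Delta_{x,v}$, which can absorb the bounded perturbations $\sigma \chi(\de v) v \cdot \nabla_x$ and $\sigma Q_{\eps, \de}(g, \cdot)$; closedness uses weighted parabolic estimates (with constants possibly depending on $\de$, which is acceptable here) together with \eqref{e:a_priori1}. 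Hence $1 \in S$ and a smooth solution $f^{\eps, \de}$ exists.

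In the second stage I would hold $\eps > 0$ fixed and let $\de \to 0^+$. Since \eqref{e:a_priori1} is valid for any $\de \geq 0$, it supplies a uniform-in-$\de$ bound on $\|f^{\eps, \de}\|_{L^{\infty, m}}$. The $\eps \Delta_{x,v}$ term provides local parabolic H\"older regularity also uniform in $\de$, so standard compactness extracts a limit $f^\eps$ satisfying $\cL_{1, \eps, 0} f^\eps = 0$. Now $\de = 0$, so \eqref{e:a_priori2} is available and, combined with \eqref{e:a_priori1}, upgrades $f^\eps$ to $Y_T^{k, n, m}$ with norm bounded independently of $\eps$. In the third stage I would pass $\eps \to 0^+$: weak-$*$ compactness in $Y_T^{k, n, m}$, combined with strong compactness in a weaker topology obtained via an Aubin-Lions argument using the equation to control $\partial_t f^\eps$, produces a limit $f \in Y_T^{k, n, m}$ solving $(\partial_t + v\cdot\nabla_x) f = Q(g, f)$ with initial data $f_{\rm in}$.

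Nonnegativity propagates at each stage by testing the regularized equation against $-(f)_- \vv^{2n}$: invoking the decomposition $Q = Q_{\rm s} + Q_{\rm ns}$, when $g \geq 0$ the nonlocal diffusion kernel $K_g$ from \Cref{l:Qs} is nonnegative and yields a favorable quadratic form on $(f)_-$, while $Q_{\rm ns}(g, f) = f (S * g)$ is linear in $f$ and fits into a Gronwall loop, forcing $(f)_- \equiv 0$. Uniqueness is obtained by taking the difference $w = f_1 - f_2$ of two $Y_T^{k, n, m}$ solutions, multiplying by $\vv^{2n} w$, integrating, and invoking \Cref{p:Q_estimates}.(iii) to close a Gronwall loop in $\|w\|_{L^{2, n}}$. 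The main obstacle, in my view, is the first stage: while the method of continuity is classical, carefully running it in this polynomially-weighted, nonlocal setting and verifying that the $\de$-dependent estimates used at this level do not interfere with the $\de$-uniform estimate \eqref{e:a_priori1} — the bridge to stage two — is the subtlest bookkeeping in the construction.
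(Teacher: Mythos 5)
Your plan is correct and follows essentially the same route as the paper: regularize with the two parameters $\eps,\de$, solve at fixed $\eps,\de>0$ by the method of continuity in $\sigma$ using parabolic Schauder estimates, then deregularize first in $\de$ and then in $\eps$ using \eqref{e:a_priori1} and (once $\de=0$) \eqref{e:a_priori2}, with non-negativity obtained at the regularized level and uniqueness via an $L^{2,n}$ Gr\"onwall argument. The only points you gloss that the paper makes explicit are the iterated (weighted) Schauder bootstrap showing that $f^{\eps}$ qualitatively belongs to $Y_T^{k,n,m}$ before the a priori estimate \eqref{e:a_priori2} may legitimately be applied to it, and the fact that the non-negativity claim (proved there by the maximum principle, in your version by testing with the negative part) implicitly requires $g\geq 0$, exactly as you note.
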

\begin{proof}
The tool that we use to construct $f$ is the method of continuity.  We do this in three steps: (i) apply Schauder estimates from the heat equation, (ii) construct a solution via the method of continuity, and (iii) use \Cref{p:a_priori} to deregularize.

{\bf Step (i).} Fix $\alpha \in (0, s(1-s)/2)$.  We now apply the classical Schauder estimates (see, e.g., \cite[Chapter 3, Theorem 6]{friedman}) to obtain, for any $f$ with the right-hand side finite,
\begin{equation}\label{e:schauder1}
	\|f\|_{C^{2,\alpha}_{\rm para}}
		\lesssim \|\cL_{\sigma,\e,\de} f\|_{C^\alpha_{\rm para}}
				+ \| \chi(\delta v) v\cdot\nabla_x f\|_{C^\alpha_{\rm para}}
				+ \| Q_{\e,\de}(g,f)\|_{C^\alpha_{\rm para}}
				+ \|f|_{t=0}\|_{C^{2,\alpha}}
\end{equation}
where $C^\alpha_{\rm para}$ (resp.\ $C^{2,\alpha}_{\rm para}$) is the classical parabolic space encoding $C^{\alpha/2}$ regularity in $t$ and $C^{\alpha}$ regularity in $x$ and $v$  (resp.\ $C^{1, \alpha/2}$ regularity in $t$ and $C^{2,\alpha}$ regularity in $x$ and $v$).  We note that the applied constant above depends only on $\e$, and that all norms in \eqref{e:schauder1} are over $[0,T]\times \T^3\times \R^3$.

We now outline how to simplify the second and third terms in~\eqref{e:schauder1}.  The third term is significantly more technical, so we only outline how to handle that term and omit discussion of the second term. From~\cite[Lemmas 6.4 and 6.9]{imbert2019smooth}\footnote{Actually, these estimates are given using the kinetic H\"older spaces so they do not apply immediately.  However, one can simply apply the estimate for each variable, thereby getting the $t$, $x$, and $v$ regularity estimates separately for the relevant H\"older spaces (in fact, this can be seen directly from the proofs in~\cite{imbert2019smooth}.} and using that we have cut-off large velocities both inside and outside the collision kernel, we have
\begin{equation}\label{e:LE1}
	\| Q_{\e,\de}(g,f)\|_{C^\alpha_{\rm para}}
	 	\lesssim \|g^\e\|_{C^{\alpha_s}_{\rm para}} \|f\|_{C^{2s + \alpha_s}_{\rm para}},
\end{equation}
where $\alpha_s = (1+2s) \alpha / (2s)$.  Using a classical interpolation lemma, we have $\|f\|_{C^{2s + \alpha_s}_{\rm para}} \leq C_\theta \|f\|_{L^\infty} + \theta \|f\|_{C^{2,\alpha}_{\rm para}}$ for any $\theta > 0$.  Note that this requires $2s + \alpha_s < 2 + \alpha$, which is true by our choice of $\alpha$ and $\alpha_s$.  Then, using the estimates in~\eqref{e:a_priori1},  we find
\[
	\|f\|_{L^\infty}
		\lesssim \|f|_{t=0}\|_{L^\infty} + \|\cL_{\sigma,\e,\de} f\|_{L^\infty}.
\]

Combining all above estimates and choosing $\theta$ sufficiently small to absorb the $\|f\|_{C^{2,\alpha}_{\rm para}}$ terms into the left-hand side, we obtain
\begin{equation}\label{e:schauder2}
	\|f\|_{C^{2,\alpha}_{\rm para}}
		\lesssim \|\cL_{\sigma,\e,\de} f\|_{C^\alpha_{\rm para}}
				+ \|f|_{t=0 }\|_{C^{2,\alpha}}.
\end{equation}

{\bf Step (ii):} Let
\[
	\cT_\sigma : C^{2,\alpha}_{\rm para}([0,T]\times \T^3 \times \R^3) \to C^\alpha_{\rm para}([0,T]\times \T^3 \times \R^3) \times C^{2,\alpha}(\T^3 \times \R^3)
\]
be defined by
\[
	\cT_{\sigma}(f)
		= \Big( \cL_{\sigma,\e,\de} f, 
			f|_{t=0}\Big).
\]
It is clear that $\cT_\sigma$ is a well-defined operator between Banach spaces and, using \Cref{e:schauder2}, there exists $C>0$, independent of $\sigma$, such that
\[
	\|f\|_{C^{2,\alpha}_{\rm para}}
		\leq C \|\cT_\sigma f\|_{C^\alpha_{\rm para} \times C^{2,\alpha}}.
\]
Finally, we notice that $\cT_0$ is onto since this corresponds to the solvability of the heat equation.  The method of continuity (see, e.g., \cite[Theorem~5.2]{gilbargtrudinger}) then implies that $\cT_\sigma$ is a bijection for all $\sigma \in [0,1]$.  Thus defining
\[
	f_{\e,\de} :=
		\cT_1^{-1}(0, \chi(\e \cdot) f^\e_{\rm in}),
\]
we have that
\[
	(\partial_t + \chi(\de v) v\cdot \nabla_x) f_{\e,\de}
		= \e \Delta_{x,v} f_{\e,\de} + Q_\e(g,f_{\e,\de})
\]
and that $f_{\e,\de}|_{t=0} = f_{\rm in}^\e$. Note that by cutting off the regularized initial data $f_{\rm in}^\eps$ with $\chi(\eps\cdot)$, we ensure the initial data will be compactly supported whenever $\eps>0$, even for $\delta = 0$.  By classical maximum principal arguments, $f_{\e,\de}\geq 0$ since $f_{\rm in} \geq 0$.

{\bf Step (iii):}  Note that $f^\e_{\rm in} \in L^{\infty,m'}$ for any $m'$.  Thus, by \Cref{p:a_priori}, we have that $f_{\e,\de}$ is bounded in $L^{\infty,m'}$ for any $m'$.  We wish to argue exactly as above using the Schauder estimates.  The main thrust of the argument is the same, the only difference being the necessity to include polynomial weights in $v$.  This causes no issues since we can use the decay of $f$; for example, instead of interpolating the $C^{\alpha}_{\rm para}$ norm of $Q_{\e,\de}(g^\e,f_{\e,\de})$ between the $C^{2,\alpha}_{\rm para}$ and $L^\infty$ norms of $f_{\e,\de}$, we interpolate between the $C^{2,\alpha}_{\rm para}$ and $L^{\infty,m'}$ norms.  If $m'$ is sufficiently large, the $L^{\infty,m'}$ norm can absorb the extra $\vv^{(\gamma+2s)_+}$ growth of $Q_{\e,\de}(g^\e,f_{\e,\de})$ (see, e.g., the estimates in \cite{imbert2019smooth}).  We conclude that
\[
	\|f_{\e,\de}\|_{C^{2,\alpha}_{\rm para}}
		\lesssim \|f^\e_{\rm in}\|_{C^{2,\alpha}} + \|f_{\e, \de}\|_{L^{\infty,m'}}
		\lesssim \|f^\e_{\rm in}\|_{C^{2,\alpha}} + \|f_{\rm in}\|_{L^{\infty,m'}},
\]
where the implied constant depends on $\e$ but not $\de$.  Thus, the Arzel\`a-Ascoli theorem implies that, up to taking a subsequence, $f_{\e,\de} \to f_\e$ locally uniformly in $C^{2,\alpha}_{\rm para}$ as $\de \to 0$.

Further, iterating the Schauder estimates, we obtain bounds on $\vv^{n'} f_\e \in L^\infty_t W^{k',\infty}$ for any $k'$ and $n'$.  Thus, $f_\e \in Y_T^{k,n,m}$; however, we need estimates on its $Y^{k,n,m}_T$-norm that are independent of $\e$.  These are provided by\footnote{Here we see the need for the two separate regularizations.  Indeed, \Cref{p:a_priori} does not yield $H^{k,n}$ bounds when $\de >0$.} \Cref{p:a_priori}.  Hence, we have that $f_\e$ is bounded in $Y_T^{k,n,m}$ independently of $\e$.   From here, it is standard to obtain a limit $f$ of $f_\e$ via compactness, so we omit the details.  This concludes the construction of $f$.  Since limits in Sobolev and H\"older spaces preserve non-negativity, we find $f\geq 0$ since $f_{\e,\de}\geq 0$.  This concludes the proof of the proposition.
\end{proof}

\subsection{Solving the full nonlinear equation: the proof of \Cref{t:main}}

We are now in a position to prove our main theorem via an iteration argument.

\begin{proof}[Proof of \Cref{t:main}]
In order to show uniqueness, we note that, due to \Cref{l:moment_interpolation}, $L^{\infty,m} \cap H^{k,n}  \subset H^{2s, 14}$ as long as $m$ is sufficiently large.  Hence, uniqueness follows from \cite[Theorem 1.1]{amuxy2011uniqueness}.

Let $f_0 = f_{\rm in}$.  For each $i \in \N$, let $f_i$ be the unique solution to
\begin{equation}\label{e:iteration}
 	(\partial_t + v\cdot\nabla_x) f_i
		= Q(f_{i-1}, f_i)
		\qquad \text{ in } (0,\infty) \times \dom
\end{equation}
with initial data $f_{\rm in}$ that is guaranteed by \Cref{p:linear_existence}.  Let
\[
	T_0 = \min\{T_1, T_2\},
\]
where $T_1$ and $T_2$ are to be determined and depend only on $\|f_{\rm in}\|_{X^{k,n,m}}$, $\gamma$, $s$, $k$, $n$, and $m$.

We first establish that the sequence $f_i$ is bounded in $Y_{T_0}^{k,n,m}$.  We do this inductively.  We claim that $\|f_i\|_{Y_{T_0}^{k,n,m}} \leq 2 \|f_{\rm in}\|_{X^{k,n,m}}$ for every $i$.  This is clearly true for the case $i=0$, hence, we assume it is true for $i$ and show that it holds for $i+1$.  From \Cref{p:a_priori}, we have
\[
	\|f_{i+1}\|_{Y^{k,n,m}_{T_0}}^2
		\leq (1+CT_0) \|f_{\rm in}\|_{X^{k,n,m}}^2 \exp\Big\{C\int_0^{T_0} \|f_{i}\|^2_{X^{k,n,m}} \dd t\Big\},
\]
where $C$ is independent of $i$.  Then, by the inductive hypothesis, we have
\[
	\|f_{i+1}\|^2_{Y^{k,n,m}_{T_0}}
		\leq (1+CT_0)\|f_{\rm in}\|_{X^{k,n,m}}^2 \exp\Big\{4C T_0 \|f_{\rm in}\|^2_{X^{k,n,m}}\Big\}.
\]
The bound on $f_{i+1}$ is now finished by defining
\[
	T_1 = \min\Big\{\frac{1}{4C \|f_{\rm in}\|_{X^{k,n,m}}^2} \log(2), 1/C\Big\}
\]
and using that $T_0 \leq T_1$.

We note that, since $f_i$ satisfies~\eqref{e:iteration} and is an element of $Y_{T_0}^{k,n,m}$, $f_i\in W^{1,\infty}_t H^{k-2s,n-2}_{x,v}$ with bounds uniform in $i$.  Differentiating \eqref{e:iteration} in time, we find that $f_i \in W^{2,\infty}_t H^{k-4s,n-4}_{x,v}$ with bounds likewise uniform in $i$.

We now obtain a limit.  For any $i$, let $F_i = (f_i, f_{i+1}) \in Y_{T_0}^{k,n,m} \times Y_{T_0}^{k,n,m}$.  Thus, taking a weak limit in $Y_{T_0}^{k,n,m}$ and a strong limit in any weaker space (e.g.\ $C^{1,\alpha}_t H^{1,n-4}_{x,v} \cap C^\alpha_tH^{2,n-2}_{x,v}$), we find that there must exist a pair $F = (\bar f_1, \bar f_2)$ such that
\[
	(\partial_t + v\cdot \nabla_x)\bar f_2 = Q(\bar f_1, \bar f_2).
\]

Hence, our proof of existence is concluded after showing that $\bar f_1 = \bar f_2$.  Let $w_i = f_{i+1} - f_i$.  If we establish that $\|w_i\|_{L^{2,n}} \to 0$ as $i\to\infty$, it follows that $\bar f_1 = \bar f_2$.  Notice that
\[
	(\partial_t + v\cdot\nabla_x) w_i
		= Q(f_i, w_i) + Q(w_{i-1}, f_i).
\]
Multiplying this by $\vv^{2n} w_i$ and integrating, we find
\[
	\frac{1}{2} \ddt \int \vv^{2n} |w_i|^2 \dd v \dd x
		= \int \vv^{2n} Q(f_i,w_i) w_i \dd v \dd x + \int \vv^{2n} Q(w_{i-1}, f_i) w_i \dd v \dd x.
\]
We estimate the first term using \Cref{p:Q_estimates}.(iii) and the second term we estimate using \Cref{p:Q_estimates}.(i)-(ii) in order to get
\[\begin{split}
	\frac{1}{2} \ddt \int \vv^{2n} |w_i|^2 \dd v \dd x
		&\lesssim \|f_i\|_{L^{\infty,m}} \|w_i\|_{L^{2,n}}^2 + \|f_i\|_{X^{k,n,m}} \|w_{i-1}\|_{L^{2,n}} \|w_i\|_{L^{2,n}}\\
		&\leq \left( \|f_i\|_{L^{\infty,m}}  + \frac 1 2 \| f_i\|_{X^{k,n,m}}^2 \right) \|w_i\|_{L^{2,n}}^2 + \frac 1 2 \| w_{i-1} \|_{L^{2,n}}^2.
\end{split}\]
Using that $\| f_i \|_{L^{\infty,m}} \leq C (1+ \| f_i \|_{X^{k,n,m}})$ we then have
\[
	\frac 1 2 \ddt \int \vv^{2n} |w_i|^2 \dd v \dd x \lesssim C (1+\| f_{\rm in} \|_{X^{k,n,m}})^2 \|w_i \|_{L^{2,n}}^2
		+ \frac 1 2 \| w_{i-1} \|_{L^{2,n}}^2.
\]
Integrating this in time, using that $w_i(0,\cdot,\cdot) \equiv 0$, and taking a supremum in $t$ yields
\[
	\|w_i\|_{L_t^\infty L^{2,n}}^2
		\leq 2 C T_0 (1+\|f_{\rm in}\|_{X^{k,n,m}})^2 \| w_i \|_{L_t^\infty L^{2,n}}^2
			+  \frac{T_0}{2} \| w_{i-1} \|_{L_t^\infty L^{2,n}}^2.
\]
We now choose $T_2 = \min\{\frac 1 2 ,(4 C (1+\|f_{\rm in}\|_{X^{k,n,m}})^2)^{-1}\}$.  Since $T_0 \leq T_2$, we obtain
\[
	\|w_i\|_{L_t^\infty L^{2,n}}^2
		\leq \frac{1}{2} \|w_{i-1}\|_{L_t^\infty L^{2,n}}^2,
\]
which implies that $w_i \to 0$ in $L^{2,n}$.  As noted above, this completes the proof of existence.

Finally, we note that $f\in C([0,T]; H_{x,v}^{k,n})$ by standard energy methods along with the bounds above (see, e.g., the proof of  \cite[Theorem 4.1]{amuxy2010regularizing}) and that $f\geq 0$ since $f_i \geq 0$ for all $i$.
\end{proof}

\section{Estimates on $Q$: the proof of \Cref{p:Q_estimates}} \label{s:Q}


\subsection{The estimate on $\Qs$: the proof of \Cref{p:Q_estimates}.(i)}\label{s:Qs-proof}
\begin{proof}[Proof of \Cref{p:Q_estimates}.(i)]

Recall the formula for $\Qs$ from~\Cref{l:Qs}.  We begin by using an annular decomposition of $\Qs(g,f)(v)$ for any fixed $v \in \R^3$.    Let $A_k(v) = B_{2^k |v|} (v) \setminus B_{2^{k-1}|v|}(v)$, and, for convenience, let $\mu = n + 7/2 + \gamma + \e$ and $\eta = 3/2 + \e + (\gamma+2s)_+$. 
We write
\[\begin{split}
	\Qs(g,f)
		= \sum_{k\in \Z} \int_{A_k(v)} K_g(v,v') (f(v') - f(v)) \dd v'.
\end{split}\]

\medskip

{\bf Step One: estimating the sum for any $k\leq -1$.}  Taylor expanding $f$ at $v$, we find
\[
	f(v') - f(v) = \left((Df)(\xi_{v,v'}) - (Df)(v)\right)\cdot (v'-v) + (Df)(v)\cdot (v'-v)
\]
for some $\xi_{v,v'}$ on the line segment connecting $v$ and $v'$.  Thus,
\[\begin{split}
	&\int_{A_k(v)} K_g(v,v') (f(v') - f(v)) \dd v' \\
		&= \int_{A_k(v)}  K_g(v,v') ((Df)(\xi_{v,v'}) - (Df)(v)) \cdot (v-v') \dd v'
			+ \int_{A_k(v)}  K_g(v,v') (Df)(v)(v-v') \dd v'.
\end{split}\]
It is easy to see that $K_g(v,v') = K_g(v, v - (v'-v))$.  Hence, the last integral above vanishes by symmetry.  The remaining integral can be estimated using the $\|\langle\cdot\rangle^{\mu} Df\|_{C^\alpha}$ norm of $f$.  After this, we use~\Cref{l:kernel_bounds}, which yields
\begin{equation}\label{e:c001}
\begin{split}
	\Big|\int_{A_k(v)} K_g(v,v') &(f(v') - f(v)) \dd v'\Big|
		\leq \langle v \rangle^{-\mu} \|\langle \cdot \rangle^{\mu} Df\|_{C^\alpha} \int_{A_k(v)}  K_g(v,v') |v-v'|^{1+\alpha} \dd v'\\
		&\lesssim \vv^{-\mu}(2^k|v|)^{1+\alpha-2s} \|\langle \cdot \rangle^{\mu} Df\|_{C^\alpha} \int_{\R^3}  |g(v')| |v-v'|^{\gamma + 2s} \dd v'.
\end{split}
\end{equation}
Recall that $1+\alpha - 2s > 0$ by assumption.  We now apply Cauchy-Schwarz and a straightforward estimate of the convolution of algebraic functions to find
\begin{equation}\label{e:Q_step12}
\begin{split}
	&\left|\int_{A_k(v)} K_g(v,v') (f(v') - f(v)) \dd v'\right|\\
		&\quad \lesssim \vv^{-\mu+1+\alpha-2s}(2^k)^{1+\alpha-2s} \|\langle \cdot \rangle^{\mu} Df\|_{C^\alpha} \|g\|_{L^{2,\eta}} \left(\int \langle v'\rangle^{-2\eta}|v-v'|^{2(\gamma + 2s)} \dd v'\right)^{1/2}\\
		&\quad \lesssim \vv^{-\mu + 1 + \alpha - 2s }(2^k)^{1+\alpha-2s} \|\langle \cdot \rangle^{\mu} Df\|_{C^\alpha} \|g\|_{L^{2,\eta}} \vv^{\gamma + 2s}.
\end{split}
\end{equation} 
Above we used that $\gamma + 2s > -3/2$ so that $|v-v'|^{2(\gamma+2s)}$ is integrable near $v$ and that $\eta > 3/2 + (\gamma + 2s)_+$.

Summing over all $k\leq -1$, we obtain
\[\begin{split}
	\sum_{k\leq 1} \left|\int_{A_k(v)} K_g(v,v') (f(v') - f(v)) \dd v'\right|
		&\lesssim \sum_{k\leq-1} \vv^{-\mu+1+\alpha + \gamma}(2^k)^{1+\alpha-2s} \|\langle \cdot \rangle^{\mu} Df\|_{C^\alpha} \|g\|_{L^{2,\eta}}\\
		&\lesssim \vv^{ - \mu+ 1 + \alpha + \gamma}\|\langle \cdot \rangle^{\mu} Df\|_{C^\alpha} \|g\|_{L^{2,\eta}}
\end{split}\]
where we used once more that $1+\alpha>2s$.  Since $2(n + 1 + \alpha + \gamma - \mu) < -3$,
\begin{equation}\label{e:Q_step1}
	\int \vv^{2n}\left|\sum_{k\leq -1} \int_{A_k(v)} K_g(v,v') (f(v') - f(v)) \dd v'\right|^2 \dd v
		\lesssim \|\langle \cdot \rangle^{\mu} Df\|_{C^\alpha}^2 \|g\|_{L^{2,\eta}}^2.
\end{equation}
This is the desired estimate since $\mu \leq n + \eta + 2$, so Step One is complete.

\medskip

{\bf Step Two: estimating the sum for $k\geq 0$ when $|v'| \geq \vv/2$.} 
Fix any $k\geq 0$.  Using \Cref{l:kernel_bounds}, we find
\[\begin{split}
	&\left|\int_{A_k \setminus B_{\vv/2}} K_g(v,v') (f(v') - f(v)) \dd v'\right|
		\lesssim  \vv^{-m} \|f\|_{L^{\infty,m}} \int_{A_k\setminus B_{\vv/2}} |K_g(v,v')| \dd v' \\
		&\qquad \leq \vv^{-m} \|f\|_{L^{\infty,m}} \int_{A_k} |K_g(v,v')| \dd v' 
		\lesssim \vv^{-m} \|f\|_{L^{\infty,m}}  (2^k|v|)^{-2s}\int_{\R^3} |g(v')| |v-v'|^{\gamma + 2s} \dd v'.
\end{split}\]
We now apply Cauchy-Schwarz exactly as we did in~\eqref{e:Q_step12} to obtain
\[\begin{split}
	&\left|\int_{A_k \setminus B_{\vv/2}} K_g(v,v') (f(v') - f(v)) \dd v'\right|
		\lesssim \vv^{-m + \gamma + 2s - \e} \|f\|_{L^{\infty,m}}  (2^k)^{-2s}
		|v|^{-2s} \|g\|_{L^{2,\eta}}.
\end{split}\]

Unfortunately, $|v|^{-4s}$ is not integrable near $v= 0$ in general.  However, we notice the following.  Let $k_v = (-1 - \log|v|)_+$, and, if $0 \leq k < k_v$, then by a short calculation, $A_k \setminus B_{\vv/2} = \emptyset$.  Thus,
\begin{equation}\label{e:Q_step2}
\begin{split}
	\int \vv^{2n} &\left|\sum_{k \geq 0} \int_{A_k \setminus B_{\vv/2}} K_g(v,v') (f(v') - f(v)) \dd v'\right|^2 \dd v\\
		&= \int \vv^{2n} \left|\sum_{k \geq k_v} \int_{A_k \setminus B_{\vv/2}} K_g(v,v') (f(v') - f(v)) \dd v'\right|^2 \dd v\\
		&\lesssim \|f\|_{L^{\infty,m}}^2 \|g\|_{L^{2,\eta}}^2 \int_{\R^3} \vv^{2(n -m + \gamma + 2s - \e)} |v|^{-4s} \left| \sum_{k\geq k_v} 2^{-2sk}\right|^2 \dd v\\
		&\lesssim \|f\|_{L^{\infty,m}}^2 \|g\|_{L^{2,\eta}}^2 \int_{\R^3} \vv^{2(n -m + \gamma + 2s - \e)} \vv^{-4s}  \dd v
		\lesssim \|f\|_{L^{\infty,m}}^2 \|g\|_{L^{2,3/2 + \e}}^2.
\end{split}
\end{equation}
In the second-to-last inequality, we used that
\[
	|v|^{-4s}\left|\sum_{k\geq k_v} 2^{-2sk}\right|^2
		\lesssim |v|^{-4s} \left|2^{-2sk_v}\right|^2
		\lesssim |v|^{-4s}\min\{1,|v|^{4s}\}
		\lesssim \vv^{-4s},
\]
and in the last inequality, we used that $m$ is sufficiently large so that $2(n - m + \gamma - \e) < - 3$.  This concludes Step Two.


\medskip

{\bf Step Three: estimating the sum for $k\geq 0$ when $|v| \leq 10$ and $|v'| \leq \vv/2$.} 
When $|v|\leq 10$, this estimate is straightforward because we do not have the issue of weights as the integrals are on compact sets.  In fact, the estimate can be done exactly as in Step One.  Hence, we omit the proof and assert that
\begin{equation}\label{e:Q_step3}
\begin{split}
	\int_{B_{10}} \vv^{2n} &\left|\sum_{k \geq 0} \int_{A_k \cap B_{\vv/2}} K_g(v,v') (f(v') - f(v)) \dd v'\right|^2 \dd v\\
		&\lesssim \int_{B_{10}} \vv^{2n} \left|\int_{B_{\vv/2}} K_g(v,v') (f(v') - f(v)) \dd v'\right|^2 \dd v
		\lesssim \|Df\|_{C^\alpha}^2 \|g\|_{L^{2,3/2+\e}}^2.
\end{split}
\end{equation}
This concludes Step Three.

\medskip

{\bf Step Four: estimating the sum for $k\geq 0$ when $|v| \geq 10$ and $|v'| \leq \vv/2$.} 
We now consider the final portion of $\Qs(g,f)$.  Fix any $v$ such that $|v|\geq 10$ and any $k \geq 0$.  First, by the triangle inequality, we see that
\begin{equation}\label{e:Q_step41}
\begin{split}
	\Big|\sum_{k\geq 0} &\int_{A_k\cap B_{\vv/2}} K_g(v,v') (f(v') - f(v)) \dd v'\Big|\\
		&\lesssim \int_{B_{\vv/2} } K_{|g|}(v,v') |f(v')| \dd v' + \int_{B_{\vv/2}} K_{|g|}(v,v') |f(v)| \dd v'.
\end{split}
\end{equation}
The second term on the right hand side is easy to bound.  Notice that $B_{\vv/2} \subset (B_{2\vv}(v)\setminus B_{\vv/4}(v))$ since $|v|\geq 10$.  Then, using \Cref{l:kernel_bounds}, we find
\[\begin{split}
	\int_{B_{\vv/2}} K_{|g|}(v,v') \dd v'
		&\lesssim  \int_{B_{\vv/2}} K_{|g|}(v,v') \dd v'
		\lesssim \int_{B_{2\vv}(v) \setminus B_{\vv/4}(v)} K_{|g|}(v,v') \dd v'\\
		&\lesssim \vv^{ -2s}  \int |g(v')||v-v'|^{\gamma + 2s} \dd v'
		\lesssim \vv^{\gamma}  \|g\|_{L^{2,\eta}}.
\end{split}\]
Thus, using that $\eta \leq n$,
\begin{equation}\label{e:Q_step42}
	\int_{B_{10}^c} \vv^{2n} \Big(\int_{B_{\vv/2}} K_{|g|}(v,v') |f(v)| \dd v' \Big)^2 \dd v
		\lesssim \|g\|_{L^{2,n}}^2 \|f\|_{L^{2, n}}^2.
\end{equation}

On the other hand, the first term in~\eqref{e:Q_step41} requires a bit more work.  Fix $\e \in (0,|\gamma|)$.  Applying Cauchy-Schwarz twice and using the definition of $K$, we find
\[\begin{split}
	\Big(\int_{B_{\vv/2}} &K_{|g|}(v,v') |f(v')| \dd v'\Big)^2
		\lesssim \|f\|_{L_v^{2,3/2 + \e}}^2 \int_{B_{\vv/2}} \vvp^{-(3 + 2\e)} K_{|g|}(v,v')^2  \dd v'\\
		&= \|f\|_{L_v^{2,3/2 + \e}}^2 \int_{B_{\vv/2}}\frac{\vvp^{-(3 + 2\e)}\left(\int_{(v-v')^\perp} |g(v+w)| |w|^{\gamma + 2s + 1} \ww^{1+\e}  \ww^{-1-\e}  \dd w\right)^2}{|v-v'|^{2(3+2s)}} \dd v'\\
		&\lesssim \|f\|_{L_v^{2,3/2 + \e}}^2 \int_{B_{\vv/2}}\frac{\vvp^{-(3 + 2\e)}\int_{(v-v')^\perp} g(v+w)^2 \langle w\rangle^{2(\gamma + 2s + 2 + \e)} \dd w}{|v-v'|^{2(3+2s)}} \dd v'.
\end{split}
\]
Let $H(w):= \langle w\rangle^{2n+1} g(w)^2$.  Notice that, since $|v|>10$, then $|v| \geq 10 \vv / 11$, which implies that $|v-v'| \approx \vv$ (recall that $v' \in B_{\vv/2}$).  In addition, since $w \perp (v-v')$, we see that
\begin{equation}\label{e:andrei1}
\begin{split}
	|v+w|^2
		&= |v|^2 + 2 (v-v')\cdot w + 2v' \cdot w + |w|^2
		\geq |v|^2 - \frac{3}{2}|v'|^2 - \frac{2}{3}|w|^2 + |w|^2\\
		&\geq |v|^2 - \frac{3}{4}\vv^2 + \frac{1}{3}|w|^2
		\geq |v|^2 - \frac{3}{4}\frac{121}{100}|v|^2 + \frac{1}{3}|w|^2
		\gtrsim |v|^2 + |w|^2,
\end{split}
\end{equation}
and, hence, $|v+w| \approx |v| + |w|$.  Using these observations, we find
\[\begin{split}
	\int_{B_{\vv/2}}&\frac{\vvp^{-(3 + 2\e)}\int_{(v-v')^\perp} g(v+w)^2 \langle w\rangle^{2(\gamma + 2s + 2+\e)} \dd w}{|v-v'|^{2(3+2s)}} \dd v'\\
		&\lesssim \int_{B_{\vv/2}}\frac{\vvp^{-(3 + 2\e)}}{\vv^{2(3+2s)}} \int_{(v-v')^\perp} \frac{H(v+w)}{\vv^{2n+1} + \langle w\rangle^{2n+1}} \langle w\rangle^{2(\gamma + 2s + 2 + \e)} \dd w \dd v' \dd v.
\end{split}\]

Next, we notice that $\sup_w \langle w\rangle^{2(\gamma+2s+2 + \e)}(\vv^{2n+1} + \langle w\rangle^{2n+1})^{-1} \lesssim \vv^{2(\gamma + 2s + \e) + 3 - 2n}$.  Using this and then spherical coordinates in the $v$ variable ($\rho = |v|$, $z\in \partial B_\rho$) yields
\[\begin{split}
	\int_{B_{10}^c} &\vv^{2n} \Big(\int_{B_{\vv/2}} K_{|g|}(v,v') |f(v')| \dd v'\Big)^2 \dd v\\
		&\lesssim \|f\|_{L_v^{2,3/2 + \e}}^2\int_{B_{10}^c} \vv^{2(\gamma+ \e)-3} \int_{B_{\vv/2}}\vvp^{-(3 + 2\e)} \int_{(v-v')^\perp} H(v+w) \dd w \dd v' \dd v\\
		&\lesssim \|f\|_{L_v^{2,3/2 + \e}}^2 \int_{10}^\infty \rho^{2(\gamma+\e)-3} \int_{B_{\rho/2}}   \vvp^{-(3 + 2\e)} \left(\int_{\partial B_\rho}  \int_{(v-z)^\perp} H(z+w) \dd w \dd z\right) \dd v' \dd \rho.
%
%
%
%
%
%
%
\end{split}
\]

At this point, we apply Lemma \ref{l:henderson-problem} to the $w$ and $z$ integrals to obtain
\[\begin{split}
	\int_{B_{10}^c} &\vv^{2n} \Big(\int_{B_{\vv/2}} K_{|g|}(v,v') |f(v')| \dd v'\Big)^2 \dd v\\
		&\lesssim \|f\|_{L_v^{2,3/2 + \e}}^2\int_{10}^\infty \int_{B_{\rho/2}}  \rho^{2(\gamma + \e) -3}   \vvp^{-(3 + 2\e)}  \left(\rho^2 \int_{\R^3\setminus B_{\rho/2}} \frac{H(w)}{|w|} \dd w\right) \dd v' \dd \rho \\
		&\lesssim \|f\|_{L_v^{2,3/2 + \e}}^2\int_{10}^\infty \int_{B_{\rho/2}}  \rho^{2(\gamma+\e)-1}   \vvp^{-(3 + 2\e)}  \left(\int_{\R^3} \langle w\rangle^{2n} g(w)^2 \dd w\right) \dd v' \dd \rho\\
		&= \|f\|_{L_v^{2,3/2 + \e}}^2 \|g\|_{L^{2,n}}^2 \int_{10}^\infty \int_{B_{\rho/2}}  \rho^{2(\gamma+\e)-1}   \vvp^{-(3 + 2\e)} \dd v' \dd \rho 
		\lesssim \|f\|_{L_v^{2,3/2 + \e}}^2 \|g\|_{L^{2,n}}^2.
\end{split}
\]
The last inequality follows from the fact that $\e \in(0,|\gamma|)$ so that $\gamma + \e < 0$.

Putting this together with~\eqref{e:Q_step42} yields, in view of~\eqref{e:Q_step41},
\begin{equation}\label{e:Q_step4}
		\int_{B_{10}^c}\Big|\sum_{k\geq 0} \int_{A_k\cap B_{\vv/2}} K_g(v,v') (f(v') - f(v)) \dd v'\Big|^2 \dd v
			\lesssim \|f\|_{L_v^{2,3/2 + \e}}^2 \|g\|_{L^{2,n}}^2	.
\end{equation}

The proof is finished after compiling~\eqref{e:Q_step1}, \eqref{e:Q_step2}, \eqref{e:Q_step3}, and \eqref{e:Q_step4}.
\end{proof}

\subsection{An estimate on $\int \vv^{2n} Q(g,f)f \dd v$: the proof of \Cref{p:Q_estimates}.(iii)}

\begin{proof}[Proof of \Cref{p:Q_estimates}.(iii)] 
This proof is very similar to \cite[Section 4, ``Estimate of $A_1$'']{amuxy2011bounded}; though, their proof does not work out of the box as they use the Gaussian decay of $f$ assumed in that paper.  As such, we sketch steps which are exactly the same; the interested reader can consult~\cite{amuxy2011bounded}.

Let $F(v) := \vv^n f(v)$, then we have
\begin{equation}
\begin{split}
\int \vv^{2n} Q(g,f) f \dd v &= \int F Q(g, F) dv + \int F \left( \vv^n Q(g,f) - Q(g,\vv^n f) \right) \dd v
	=: I_1 + I_2.
\end{split}
\end{equation}
The estimate on $I_2$ will include a term that we cannot bound using the $Y^{k,n,m}_T$-norm of $f$ or $g$. Instead,
the $I_1$ term, being symmetric, will provide a corresponding negative term of the same order which will close the estimate
on $I_2$.   For ease of notation, we replace $v_*$ with $w$. A useful tool is the so-called pre/post collisional change of variables:
\begin{equation}\label{e:pre-post}
\begin{split}
v &\rightarrow \frac{v+w}{2} + \frac{|v-w|}{2}\sigma = v', \ \
w \rightarrow \frac{v+w}{2} - \frac{|v-w|}{2}\sigma = w', \ \
\sigma \rightarrow \frac{v-w}{|v-w|} =: \sigma'.
\end{split}
\end{equation}
This change has unit Jacobian, and reflects the ``micro-reversibility'' of the collision operator. Moreover, it leaves
the quantities $\cos\theta = \sigma \cdot \sigma'$ and $|v-w|$ unchanged.

\medskip

{\bf Step One: estimating $I_1$ with a coercive contribution.} Following~\cite{amuxy2011bounded}, we have that
\begin{equation}
\begin{split}
I_1 &
	= -\frac 1 2 D + \int Q(g,F^2) \dd v,
\end{split}
\end{equation}
where
\begin{equation}\label{e:prop_3_1_D}
D = \int_{\R^6 \times \bbS} \left( F(v') - F(v) \right)^2 g(w) B(|v-w|,\cos\theta)\dd \sigma \dd w \dd v.
\end{equation}

Recall that $Q(g,F^2) = \Qs(g,F^2) + \Qns(g,F^2)$.  To bound the $Q_{\rm s}$ term, we use a change of variables and Lemma \ref{l:kernel_bounds} to get
\begin{equation}
\begin{split}
\int Q_{\text{s}}(g,F^2) \dd v &= \frac 1 2 \int F(v)^2  \int \left( K_g(v',v) - K_g(v,v') \right) \dd v' \dd v \\
&\lesssim \| F \|_{L^2_v}^2 \sup_{v} \left( \int |g(z)| |v-z|^\gamma \dd z \right)
\leq \| f \|_{L^{2,n}}^2 \| g \|_{L^{\infty,m}}.
\end{split}
\end{equation}

The nonsingular term is handled by easily using \Cref{l:Qns}:
\begin{equation}
\int Q_{\text{ns}}(g,F^2) \dd v \leq C_S \| F \|_{L^2}^2 \sup_{v} \left( \int g(z) |v-z|^\gamma dz \right)
\lesssim \| f \|_{L^{2,n}}^2 \| g \|_{L^{\infty,m}}.
\end{equation}

Thus,
\begin{equation}\label{e:prop_3_1_I1}
I_1 + \frac{1}{2} D \lesssim \| f \|_{L^{2,n}}^2 \| g \|_{L^{\infty,m}}.
\end{equation}

{\bf Step Two: estimating $I_2$.} Using \eqref{e:Q}, we have
\begin{equation}
\begin{split}
&\int F\left( \vv^n Q(g,f) - Q(g,\vv^n f) \right) \dd v\\
&\quad\quad\quad\quad = \int_{\R^6\times \bbS} \left( \vv^n - \langle v' \rangle^n \right) f(v') F(v) g(w') B(|v-w|,\cos\theta) \dd \sigma \dd w \dd v.
\end{split}
\end{equation}
Since $v'$ is a dependent variable, it would be very difficult to estimate integrals of $f(v')$, especially with weights. Instead, we
employ the pre/post-collisional change of variables \eqref{e:pre-post}.  
Thus,
\begin{equation}
\begin{split}
I_2 &= \int_{\R^6\times \bbS} \left( \langle v' \rangle^n - \vv^n \right) f(v) F(v') g(w) B \dd\sigma \dd w \dd v \\
&= \int_{\R^6} f(v) F(v) g(w) \int_{\bbS} \left( \langle v' \rangle^n - \vv^n \right) B \dd\sigma \dd w \dd v \\
&\quad + \int_{\R^6\times \bbS} \left( \langle v' \rangle^n - \vv^n \right) f(v) \left( F(v') - F(v) \right)
g(w) B \dd\sigma \dd w \dd v
	=: I_{21} + I_{22}.
\end{split}
\end{equation}

For $I_{21}$, we Taylor expand the difference in weights to obtain
\begin{equation}\label{e:prop_3_1_Taylor}
\langle v' \rangle^n - \vv^n = n \vv^{n-2} v \cdot (v' - v) + \frac{n(n-2)}{2} \langle \tilde{v} \rangle^{n-4} \left( \tilde{v} \cdot (v'-v) \right)^2,
\end{equation}
where $\tilde{v} = \tau v' + (1-\tau) v$ for some $\tau \in (0,1)$. By \eqref{e:Q'}, we also note that
\begin{equation}\label{e:prop_3_1_v'v}
v'-v = \frac{|v-w|}{2}(\sigma-\sigma'\cos\theta) + \frac{|v-w|}{2}(\cos\theta-1)\sigma',
\end{equation}
so in particular
$
|v'-v|^2 = \frac 1 2 |v-w|^2 (1-\cos\theta).
$ 
Using the pre/post-collisional change of variables makes the integration in $\sigma$ much simpler
in $I_{21}$. Indeed, using \eqref{e:prop_3_1_Taylor} and \eqref{e:prop_3_1_v'v}, we see that
\begin{equation}\label{e:prop_3_1_angles}
\begin{split}
&\int_{\bbS} \left( \langle v' \rangle^n-\vv^n \right) B \dd\sigma  
 	=\frac{n}{2} \vv^{n-2} |v-w|v \cdot \int_{\bbS} (\sigma - (\sigma \cdot \sigma') \sigma') B \dd\sigma\\
 	& + \frac n 2 \vv^{n-2} |v-w| v \cdot \sigma' \int_{\bbS} (\cos\theta - 1) B \dd\sigma
	 + \frac{n(n-2)}{2} \int_{\bbS} \langle \tilde{v} \rangle^{n-4} \left( \tilde{v} \cdot (v'-v) \right)^2
B \dd\sigma,
\end{split}
\end{equation}
remembering that $\sigma'$ is independent of $\sigma$, but $\tilde{v}$ is not.

The first term of \eqref{e:prop_3_1_angles} is zero; the change of variables $\sigma \rightarrow -\sigma + 2 (\sigma \cdot \sigma') \sigma'$
leaves the collision kernel $B$ unchanged, but changes the sign of the integrand. Using \eqref{e:B}, the second term of \eqref{e:prop_3_1_angles}
is bounded by
\begin{equation}
\vv^{n-1} |v-w|^{1+\gamma} \int_{\bbS} (\cos\theta-1)|\theta|^{-2-2s} \dd\sigma \lesssim \vv^{n-1}|v-w|^{1+\gamma}.
\end{equation}
For the third term, we must estimate the size of $\tilde{v}$ in terms of $v$ and $w$. Since we have conservation of energy ($|v|^2 + |w|^2
= |v'|^2 + |w'|^2$), we in particular have that $\langle v' \rangle^2 \leq \vv^2 + \langle w \rangle^2$, which implies that
\begin{equation}
\langle \tilde v \rangle^{n-2} = \langle \tau v' + (1-\tau) v \rangle^{n-2} \lesssim \langle v' \rangle^{n-2} + \vv^{n-2}
\lesssim \langle w \rangle^{n-2} + \vv^{n-2}.
\end{equation}
Using the above and that $|v'-v|^2 = \frac12 |v-w|^2 (1-\cos(\theta))$, the third term is bounded above by
\begin{equation}\label{e:prop_3_1_Taylor_end}
\begin{split}
	&\left( \langle w \rangle^{n-2} + \vv^{n-2} \right) \frac{|v-w|^{2+\gamma}}{2} \int_{\bbS} (1-\cos\theta) |\theta|^{-2-2s} \dd\sigma 
		\lesssim \left \langle w \rangle^{n-2} + \vv^{n-2} \right) |v-w|^{2+\gamma}.
\end{split}
\end{equation}
Then substituting the estimates for \eqref{e:prop_3_1_angles} into the expression for $I_{21}$
(and remembering that $m > n+3+\gamma$) yields
\begin{equation}
\begin{split}
|I_{21}| 
	&\lesssim \int F(v) f(v)  \int g(w) \left(\vv^{n-1} |v-w|^{1+\gamma} + \langle w \rangle^{n-2} |v-w|^{2+\gamma}  + \vv^{n-2} |v-w|^{2+\gamma}\right)\dd w \dd v\\
&\lesssim \int F(v) f(v)\| g \|_{L^{\infty,m}} \left( \vv^{n-1} \vv^{1+\gamma} + \vv^{2+\gamma}
+ \vv^{n-2} \vv^{2+\gamma} \right) \dd v
\lesssim \| f \|_{L^{2,n}}^2 \| g \|_{L^{\infty,m}}.
\end{split}
\end{equation}

The remaining term $I_{22}$ is outwardly more complicated because it involves $F(v')$. However, this term can be absorbed
into the negative contribution from $I_1$. Specifically, we use \eqref{e:prop_3_1_D} and the Cauchy-Schwartz and Young's inequalities to obtain
\begin{equation}
\begin{split}
|I_{22}| 
&\leq \frac 1 2 D + \frac 1 2 \int\int f(v)^2 g(w) \int_{\bbS} \left(\langle v' \rangle^n - \vv^n \right)^2 B \dd\sigma \dd w \dd v.
\end{split}
\end{equation}
The second term above can be bounded in the same way as $I_{21}$. Indeed, one way to see this is to write
$(\vv^n - \langle v' \rangle^n)^2 = 2\vv^n (\vv^n - \langle v' \rangle^n) - (\vv^{2n} - \langle v' \rangle^{2n})$. Therefore,
\begin{equation}
\begin{split}
I_{22} &\leq \frac 1 2 D + \int\int f(v)^2 g(w) \vv^n \int_{\bbS} \left( \vv^n - \langle v' \rangle^n \right) B \dd \sigma \dd w \dd v\\
&\quad + \int\int f(v)^2 g(w) \int_{\bbS} \left( \langle v' \rangle^{2n} - \vv^{2n} \right) B \dd\sigma \dd w \dd v
=: \frac 1 2 D - I_{21} + \bar{I}_{22}.
\end{split}
\end{equation}
Clearly the second term can be bounded exactly as above. The third term is bounded similarly, yielding the estimates \eqref{e:prop_3_1_Taylor} --
\eqref{e:prop_3_1_Taylor_end} with $2n$ in place of $n$. Then, since $m > 2n + 3 + \gamma$, we have
\begin{equation}
\begin{split}
|\bar{I}_{22}| \lesssim \int f(v)^2 \left( \vv^{2n-1} \| g \|_{L^{\infty,m}} \vv^{1+\gamma}
+ \| g \|_{L^{\infty,m}} \vv^{2+\gamma} + \vv^{2n-2} \| g \|_{L^{\infty,m}} \vv^{2+\gamma} \right) \dd v.
\end{split}
\end{equation}
This completes the proof of Proposition \ref{p:Q_estimates}.(iii).

\end{proof}

\subsection{An estimate on $\int \vv^{2n} Q(g,f)\partial f \dd v$: the proof of \Cref{p:Q_estimates}.(iv)}

\begin{proof}[Proof of \Cref{p:Q_estimates}.(iv)]

The proof is simpler when $\partial = \partial_{x_i}$, so we omit this proof and show the more involved case when $\partial = \partial_{v_i}$.  As a convenient consequence, the $x$ variable plays no role in our proof, so we omit it from all notation.

As before, we set $F = \vv^n  f$.  Notice that $\vv^n \partial_{v_i} f = \partial_{v_i} F - n\vv^{n-2} v_i f$.  Hence,
\begin{equation}\label{e:c31}
\begin{split}
	\int  Q(g,  f) \partial f \vv^{2n} \dd v
		&= \int [\vv^n Q(g,  f) -
					Q(g, \vv^n f)] \vv^n \partial f  \dd v\\
		&\qquad  
			- \int Q(g, F) f n v_i \vv^{n-2} \dd v
			+ \int Q(g, F) \partial F \dd v. 
\end{split}
\end{equation}

\medskip

{\bf The first term in~\eqref{e:c31}.}
We use the commutator estimate of \Cref{p:commutator}.  Indeed, we find, for $\e>0$ satisfying $(2s-1)_+ + \e < 1$, 
\begin{equation}\label{e:Q_civ1}
\begin{split}
	\int &[\vv^n Q(g, f) - Q(g, \vv^n  f)] \vv^n \partial f  \dd v\\
		&\lesssim \|g\|_{L^{2,n + 3/2 + (2s-1)_+ + \e}} \|f\|_{H^{(2s-1+\e)_+, n + (2s-1)_+}} \|\partial f\|_{L^{2,n}}.
\end{split}
\end{equation}

\medskip

{\bf The second term in~\eqref{e:c31}.} We may now appeal to \Cref{t:amuxy-sobolev-v} (with $\theta = -1$), which yields
\begin{equation}\label{e:Q_civ2}
	\Big| \int Q(g,F) (f v_i \vv^{n-2}) \dd v\Big|
		\lesssim \left( \|g\|_{L^{1,(\gamma+2s)_+}} + \|g\|_{L^2_v}\right) \|F\|_{H^{(2s-1+\eps)_+, (\gamma + 2s)_+}} \|f v_i \vv^{n-2}\|_{H^1}.
\end{equation}
This yields the desired estimate after using~\eqref{e:L1_L2}.

\medskip

{\bf The third term in~\eqref{e:c31}.} This term requires the most work.   Before beginning, we describe the obstruction.  In this term, we are able to distribute derivatives over each $F$ term; however, we want an estimate in terms of no more than $1$ derivatives of $f$; however, $Q(g,F)$ can be thought of as $2s$-derivatives applied to $F$.  Hence, we have two $F$ terms and $1 + 2s$ derivatives to distribute over them.  Without appealing to a special symmetry, one of the $F$ terms must accept at least $2s$-derivatives, which makes our proof impossible.  Thus, we search for a symmetry to aid us.

We decompose $Q$ into $\Qs$ and $\Qns$.  The nonsingular term is easy to estimate.  Indeed, recalling ~\Cref{l:Qns}, we have
\begin{equation}\label{e:Q_civ31}
\begin{split}
	\int &\Qns(g,F) \partial F \dd v
		= \int (S* g) F \partial F \dd v
		= - (1/2)\int S*(\partial g) F^2 \dd v \\
		&\lesssim \|S*(\partial g)\|_{L^\infty} \|F\|_{L^2}^2 
		\lesssim \|\partial g\|_{L^\infty}^\frac{|\gamma|}{3}\|\partial g\|_{L^1}^\frac{3 + \gamma}{3} \|F\|_{L^2}^2
		\lesssim (\| g\|_{W^{1,\infty}} + \|g\|_{H^{1,3/2 + \e}}) \|f\|_{L^{2,n}}^2.
\end{split}
\end{equation}

The singular term requires more care.  First, we symmetrize the equation to obtain
\[\begin{split}
	2\int &\Qs(g, F) \partial F \dd v
		= \int K_g (F' - F)(\partial F - (\partial F)') \dd v' \dd v
			+ \int (K_g - K_g') (F' - F) \partial F \dd v' \dd v
		=:I_1 +I_2,
\end{split}\]
where we use $K_g$ to denote $K_g(v,v')$ and $K_g'$ to denote $K_g(v',v)$.  Similarly, let $\partial'$ denote $\partial_{v_i'}$, which is useful in the sequel.  With this notation, $(\partial F)' = \partial' F'$.

\medskip

{\bf Estimating $I_1$.}  Notice that
\[\begin{split}
	2I_1
		&=  -\int K_g (\partial + \partial') (F' - F)^2\dd v' \dd v
		= \int (\partial + \partial') K_g (F' - F)^2 \dd v' \dd v
		= 2\int K_{\partial g} (F' - F)^2 \dd v' \dd v.
\end{split}\]
Denote the diagonal strip $S_D = \{(v,v') : |v-v'| < 1\}$.  Then
\[
	\int K_{\partial g} (F' - F)^2 \dd v' \dd v
		= \int_{S_D} K_{\partial g} (F' - F)^2 \dd v' \dd v
			+ \int_{S_D^c} K_{\partial g} (F' - F)^2 \dd v' \dd v
		=: I_{11} + I_{12}.
\]

For $I_{11}$, we decompose the integral into a sum over integrals on compact sets and apply a cut-off.  Indeed, let $p = (\gamma + 2s)_+$, and let $\chi$ be any smooth function that is one in $B_{10}$ and zero outside of $B_{20}$.  Then,
\[\begin{split}
	|I_{11}|
		&\leq \sum_{z\in \Z^3} \int_{B_{10}(z,z)} K_{|\partial g|} (F'-F)^2 \dd v' \dd v 
		\lesssim \sum_{z\in \Z^3} \int K_{|\partial g|} (\chi' F'- \chi F)^2 \dd v' \dd v.
\end{split}\]

We can now appeal to \cite[Theorem 4.1]{imbert2016weak} to find
\begin{equation}\label{e:Q_civ32}
\begin{split}
 	|I_{11}|
		&\lesssim \sum_{z\in \Z^3} 
			\langle z \rangle^{\gamma + 2s} \|\partial g\|_{L^{2, \eta}} \|\chi F\|_{H^s}^2
		\lesssim \sum_{z\in \Z^3} \frac{\|\partial g\|_{L^{2, \eta}} \| F\|_{H^{s,p/2}(B_{20}(z))}^2}{\langle z \rangle^{p- (\gamma + 2s)}}
		\lesssim \|\partial g\|_{L^{2, \eta}} \|F\|_{H^{s,p/2}}^2.
\end{split}
\end{equation}

The bound of $I_{12}$ is simpler.  We first use Young's inequality:
\begin{equation*}
	|I_{12}|
		\lesssim \int_{S_D^c} K_{|\partial g|} (F')^2 \dd v' \dd v  + \int_{S_D^c} K_{|\partial g|} F^2 \dd v' \dd v.
\end{equation*}
Both terms above are bounded in essentially the same way; hence, we only include the proof of the bound for the first term as it is slightly more involved.  Using \Cref{l:kernel_bounds}, we find
\[\begin{split}
	\int_{S_D^c} &K_{|\partial g|} (F')^2 \dd v' \dd v 
		= \int_{\R^3} F(v')^2 \left(\int_{B_1(v')^c} K_{|\partial g|}(v,v') \dd v\right) \dd v'\\
		&\lesssim \int_{\R^3} F(v')^2 \vvp^{\gamma + 2s} \|\partial g\|_{L^{2, 3/2 + (\gamma + 2s)_+ + \e}} \dd v' 
		\leq \|F\|_{L^{2, \gamma/2+s}}^2 \|\partial g\|_{L^{2, \eta}}.
\end{split}\]
Thus,
\begin{equation}\label{e:Q_civ33}
	|I_{12}|
		\lesssim \|F\|_{L^{2, \gamma/2+s}}^2 \|\partial g\|_{L^{2, \eta}}.
\end{equation}
This concludes the proof of the bound of $I_1$.

\medskip

{\bf Estimating $I_2$.} 
Recall the definition of $S_D$ from above.  Then
\[
	I_2
		= \int_{S_D} (K_g - K_g')(F' - F) \partial F \dd v' \dd v
			+ \int_{S_D^c}(K_g - K_g')(F' - F) \partial F \dd v' \dd v
		=: I_{21} + I_{22}.
\]
Since the estimate of $I_{22}$ is more straightforward, we begin with that.  Indeed, we split the integral in two:
\begin{equation}\label{e:c36}
	I_{22}
		= \int_{S_D^c} (K_g - K_g') F' \partial F \dd v' \dd v 
			- \int_{S_D^c} (K_g - K_g') F \partial F \dd v' \dd v.
\end{equation}
Using \Cref{l:kernel_bounds}, it is clear that
\[\begin{split}
	\Big|\int_{S_D^c} (K_g - K_g') F \partial F \dd v' \dd v\Big|
		&\lesssim (\|g\|_{L^{2,3/2 + \e}} + \|g\|_{L^\infty}) \int \vv^\gamma |F \partial F| \dd v\\
		&\leq (\|g\|_{L^{2,3/2 + \e}} + \|g\|_{L^\infty})\|F\|_{L^2} \|F\|_{H^1}.
\end{split}\]

The first term in~\eqref{e:c36} is estimated similarly, though with a bit more subtlety.  Indeed, from \Cref{l:kernel_bounds}, we see that
\[\begin{split}
	\int_{S_D^c} &(K_g - K_g') F' \partial F \dd v' \dd v
		\lesssim \int_{S_D^c}\frac{\|g\|_{L^{\infty, 3/2 + \eta}}\left(\vv^{\gamma + 2s + 1} + \vvp^{\gamma + 2s + 1}\right)}{|v-v'|^{3 + 2s}} |F'| |\partial F| \dd v' \dd v.
\end{split}\]
Since $|v-v'| \geq 1$, we have that $\vv^{\gamma + 2s + 1} |v-v'|^{-1 - \gamma-2s} \lesssim \vvp^{(\gamma+2s+1)_+}$.  Thus,
\[\begin{split}
	\int_{S_D^c} &(K_g - K_g') F' \partial F \dd v' \dd v
		\lesssim \int_{S_D^c}\frac{\|g\|_{L^{\infty, 3/2 + \eta}}\vvp^{(\gamma + 2s + 1)_+}}{|v-v'|^{2 -\gamma}} |F'| |\partial F| \dd v' \dd v\\
		&= \|g\|_{L^{\infty, 3/2 + \eta}} \int_{\R^3} \vvp^{(\gamma+2s+1)_+} |F'| \left(\int_{B_1(v')^c} \frac{|\partial F|}{|v-v'|^{2-\gamma}} \dd v\right) \dd v'\\
		&\lesssim \|g\|_{L^{\infty, 3/2 + \eta}} \|F\|_{L^{2,(\gamma+2s+1)_+ +  3/2 + \e}} \|\partial F\|_{L^2}
		\lesssim \|g\|_{L^{\infty, 3/2 + \eta}} \|f\|_{L^{2,n + 1 + \eta}} \|f\|_{H^{1,n}}.
\end{split}\]
In the second-to-last inequality we used Cauchy-Schwarz twice and that $2 - \gamma > 3/2$.  The last equality is just by recalling the definition of $F$.  Thus, we have established
\begin{equation}\label{e:Q_civ34}
	|I_{22}|
		\lesssim (\|g\|_{L^{2,3/2+\e}} + \|g\|_{L^{\infty, 3/2 + \eta}}) \|f\|_{L^{2,n + 1 + \eta}} \|f\|_{H^{1,n}}.
\end{equation}

Now we consider $I_{21}$.  First, let $p = (\gamma + 2s + 1)_+$.  Then we re-write $I_{21}$:
\[\begin{split}
	I_{21}
		&= \int_{S_D} \left[\left(\frac{K_g - K_g'}{\vv^p} ((\vv^p F)' - (\vv^p F)) \partial_{v_i}F\right)
			+ \left(\frac{K_g - K_g'}{\vv^p} (\vvp^p - \vv^p ) F' \partial F\right) \right]\dd v' \dd v \dd x\\
		&=: I_{211} + I_{212}.
\end{split}\]

We first obtain a useful bound on the kernel.  Let $\alpha \in (s,2s)$ and let $\theta = 2s-\alpha$.  Notice that $\theta \in (0,s)$.  Then, using  \Cref{l:kernel_bounds} and recalling that $(v,v') \in S_D$ implies $\vv \approx \vvp$, we find
\begin{equation*}
	|K_g - K_g'|
		\lesssim \|\langle \cdot \rangle^{2 + p + \e}g\|_{C_v^{\alpha}} \vv^{\gamma + 2s + 1} |v-v'|^{-3 - \theta}.
\end{equation*}

We estimate $I_{211}$ first.  Let $\e' \in (0, s-\theta)$. Using the bound on $K_g$ above and Cauchy-Schwarz, we see that
\[\begin{split}
	|I_{211}|
		&\lesssim \|\langle \cdot \rangle^{2 + p + \e}g\|_{C_v^{\alpha}} \int_{S_D}  \left(\frac{|(\vv^p F)' - (\vv^pF)|}{|v-v'|^{3/2+\theta+ \e'}}\right)\left(\vvp^{\gamma + 2s + 1 - p} \frac{|\partial F|}{|v-v'|^{3/2-\e'}}\right) \dd v' \dd v\\
		&\lesssim \|\langle \cdot \rangle^{2 + p + \e}g\|_{C_v^{\alpha}}\left(\int_{S_D}  \frac{|(\vv^p F)' - (\vv^pF)|^2}{|v-v'|^{3+2(\theta + \e')}} \dd v \dd v' \right)^{1/2}
		 \left(\int_{S_D}  \frac{|\partial F|^2}{|v-v'|^{3 - 2\e'}}\dd v \dd v'\right)^{1/2}.
\end{split}\]
We used here that $\gamma + 2s + 1 - p \leq 0$.

The first parenthetical term is, up to extending the domain of integration, $\int (\vv^p F) (-\Delta)^{\theta+\e'} (\vv^p F) \dd v$.  Thus, it is bounded by $\|F\|_{H^{\theta+\e',p}}$, which is, in turn, bounded by $\|F\|_{H^{s,p}}$.  On the other hand, the second parenthetical term is bounded by $\|\partial F\|_{L^2}$, by integrating first in $v'$ and then in $v$. We conclude that
\begin{equation}\label{e:Q_civ35}
	|I_{211}|
		\lesssim \|\langle \cdot \rangle^{2 + p + \e}g\|_{C_v^{\alpha}}
			\|f\|_{H^{s, n + (\gamma + 2s + 1)_+}}
			\|f\|_{H^{1,n}}.
\end{equation}

We now turn to $I_{212}$.  This follows by an easy application of Taylor's theorem (again, using that $\vv \approx \vvp$ on $S_D$).  This yields
\begin{equation}\label{e:Q_civ36}
	|I_{212}|
		\lesssim \|\langle \cdot \rangle^{2 + p + \e}g\|_{C^{\alpha}}
			\|f\|_{L^{2, n + (\gamma + 2s)_+}}
			\|f\|_{H^{1,n}}
\end{equation}
but omit the details as it is simpler than the argument for $I_{211}$.

The combination of \cref{e:Q_civ1,e:Q_civ2,e:Q_civ31,e:Q_civ32,e:Q_civ33,e:Q_civ34,e:Q_civ35,e:Q_civ36} concludes the proof.
\end{proof}

\subsection{Weighted $L^\infty$ bounds on $Q$: the proof of \Cref{p:Q_estimates}.(v)}

\begin{proof}[Proof of \Cref{p:Q_estimates}.(v)]
	First, we estimate $Q_{\rm ns}(g,f)$.  Using Lemma \ref{l:Qns}, we find immediately that
	\[
		\|Q_{\rm ns}(g,f)\|_{L^{\infty,m}}
			\leq \|g\|_{L^{\infty,3}} \|f\|_{L^{\infty,m}}.
	\]
	Hence, the bulk of the work is in estimating $Q_{\rm s}(g,f)$.  
	
	Let $A_k(v) = B_{|v| 2^{k}}(v)\setminus B_{|v| 2^{k-1}}(v)$.  Then we write
	\[
		\Qs(g,f)
			= \sum_{k\in \Z} \int_{A_k(v)} (f(v') - f(v)) K_g(v,v') \dd v'.
	\]
	Bounding the terms $k \leq -1$, $k\geq 0$ when $|v| \leq 10$, and $k\geq 0$ over the domain $A_k(v) \cap B_{\vv/2}^c$ when $|v| \geq 10$ is exactly as in \Cref{p:Q_estimates}.(i).  Hence, we omit them and consider the remaining case.
	
	Fix any $k \geq 0$. Suppose that $|v| \geq 10$ and consider the integral over the set $A_k(v) \cap B_{\vv/2}$.  Since $|v| \geq 10$, then $\vv \leq 10 |v|/11$.  It follows that $A_k(v) \cap B_{\vv/2} = \emptyset$ unless $k = 0$ or $k=1$.  We prove the case $k=0$ but the case $k=1$ follows similarly.

	We claim that, for all $v' \in B_{\vv/2}(0)$,
	\begin{equation}\label{e:c2}
		|K_g(v,v')|
			\lesssim \|g\|_{L^{\infty,m}}\vv^{-m + \gamma}.
	\end{equation}
	We postpone the proof of \eqref{e:c2} and show how to conclude using it.  Note that
	\begin{equation}\label{e:Q_cv4}
	\begin{split}
		\left|\int_{B_{R/2} \cap A_0(v)} ( f(v') - f(v)) K_g(v,v') \dd v'\right|
			&\lesssim \int_{B_{R/2} \cap A_0(v)} ( f(v')+  f(v)) \|g\|_{L^{\infty,m}}\vv^{-m + \gamma}\dd v\\
			 & \lesssim \|g\|_{L^{\infty,m}}\vv^{-m + \gamma}.
	\end{split}
	\end{equation}
	Thus, the proof is completed 
	up to establishing~\eqref{e:c2}.

	We now prove~\eqref{e:c2}.  
	Using~\eqref{e:andrei1}, if $w \in (v' - v)^\perp$, then $|v+w|^2 \approx |v|^2 + |w|^2$. 
	Thus,
	\[\begin{split}
		|K_g(v,v')|
			&= \frac{1}{|v-v'|^{3+2s}} \Big|\int_{(v-v')^\perp} g(v+w) |w|^{\gamma + 2s + 1} \dd w\Big|\\
			&\lesssim \frac{1}{|v-v'|^{3+2s}} \|g\|_{L^{\infty,m}} \int_{(v-v')^\perp} (|v| + |w|)^{-m}|w|^{\gamma + 2s + 1} \dd w
			\approx \frac{|v|^{-m + \gamma + 2s + 3}}{|v-v'|^{3+2s}} \|g\|_{L^{\infty,m}}.
	\end{split}\]
Next, using that $|v'| \leq \vv/2 \leq 11|v|/20$, we have $|v'-v|\gtrsim |v|$.  Using this in the above estimate of $K_g(v,v')$ concludes the proof of~\eqref{e:c2} and, thus, the proposition.	
\end{proof}

\appendix

\section{Technical Lemma}\label{s:appendix}

The following inequality is used crucially above in Step Three of \Cref{p:Q_estimates}.(i).  When $v_0 = 0$, it follows from a change of variables (see \cite[Lemma A.10]{imbert2016weak}); however, when $v_0 \neq 0$, the geometry becomes non-trivial and so a proof is required.

\begin{lemma}\label{l:henderson-problem}
For any $\rho>0$ and $v_0\in \R^3$ such that $\rho \geq 2|v_0|$, and any $H:\R^3\to [0,\infty)$ such that the right-hand side is finite, we have
\begin{equation}\label{e:henderson-problem}
	\int_{\partial B_\rho(0)} \int_{\{w\in \R^3 : w \cdot (z-v_0) = 0\}} H(z+w) \dd w \dd z 
		\lesssim \rho^2  \int_{B_{\rho/2}^c(0)} \frac {H(w)}{|w|} \dd w.
\end{equation}
\end{lemma}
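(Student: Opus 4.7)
The plan is to shift variables, apply a Fubini/area-formula argument, and then bound the resulting density via a nontrivial algebraic identity. Setting $\zeta := z - v_0$ and $\eta := \zeta + w$, the constraint $w \cdot (z - v_0) = 0$ becomes $w \perp \zeta$, so the LHS equals
\[
	I := \int_{S} \int_{\zeta^\perp} H(\eta + v_0) \, dw \, d\sigma(\zeta), \qquad S := \partial B_\rho(-v_0).
\]
The hypothesis $\rho \geq 2|v_0|$ gives $|\zeta| \in [\rho/2, 3\rho/2]$ on $S$, and $|\eta|^2 = |\zeta|^2 + |w|^2 \geq \rho^2/4$.

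Since the inner integral is the $\mathcal H^2$-integral of $H(\cdot + v_0)$ over the affine plane $\{y : y \cdot \hat\zeta = |\zeta|\}$, the area formula on $S$ will yield
\[
	I = \int_{\R^3} H(\eta + v_0) \, \mu(\eta) \, d\eta, \qquad \mu(\eta) := \int_{\Sigma_\eta} \frac{d\mathcal H^1(\zeta)}{|\nabla_S F(\zeta)|},
\]
where $F(\zeta) := (\eta \cdot \zeta - |\zeta|^2)/|\zeta|$, $\Sigma_\eta := S \cap \{F = 0\}$ is a circle, and $\nabla_S$ denotes the tangential gradient on $S$. The core of the argument will be the bound $\mu(\eta) \lesssim \rho^2/|\eta + 2v_0|$.

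A direct calculation using $\eta \cdot \zeta = |\zeta|^2$ on $\Sigma_\eta$ and the outward normal $\hat n_S = (\zeta + v_0)/\rho$ will give $|\nabla F| = |\eta|/|\zeta|$ and
\[
	|\nabla_S F|^2 = \frac{1}{|\zeta|^2 \rho^2}\bigl[\rho^2|\eta|^2 - (\rho^2 - |v_0|^2 - \eta \cdot v_0)^2\bigr].
\]
Geometrically, $\Sigma_\eta$ lies in the plane $\{\zeta \cdot (\eta + 2v_0) = \rho^2 - |v_0|^2\}$, so its length equals $2\pi \sqrt{\rho^2 - d^2}$ with $d$ the distance from $-v_0$ to this plane. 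With $A := v_0 \cdot \eta + |v_0|^2 + \rho^2$ and $B := \rho^2 - |v_0|^2 - \eta \cdot v_0$ (so that $A + B = 2\rho^2$), a one-line expansion gives the crucial identity
\[
	\rho^2|\eta + 2v_0|^2 - A^2 = \rho^2|\eta|^2 - B^2.
\]
Since $\rho^2 - d^2 = (\rho^2|\eta + 2v_0|^2 - A^2)/|\eta + 2v_0|^2$, this identity forces the clean cancellation $\operatorname{length}(\Sigma_\eta)/\sqrt{\rho^2|\eta|^2 - B^2} = 2\pi/|\eta + 2v_0|$; combined with $|\zeta| \leq 3\rho/2$ on $\Sigma_\eta$, one concludes $\mu(\eta) \leq 3\pi\rho^2/|\eta + 2v_0|$.

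Changing back to $\xi = \eta + v_0$, the identity $|\xi|^2 = |w + v_0|^2 + \rho^2 - |v_0|^2 \geq 3\rho^2/4$ (valid wherever the integrand is nonzero) combined with $|v_0| \leq \rho/2$ yields $|\xi + v_0| = |\eta + 2v_0| \geq (1-1/\sqrt 3)|\xi| \gtrsim |\xi|$, giving
\[
	I \lesssim \rho^2 \int \frac{H(\xi)}{|\xi + v_0|} \, d\xi \lesssim \rho^2 \int_{B_{\rho/2}^c(0)} \frac{H(\xi)}{|\xi|} \, d\xi.
\]
The step I expect to be the main obstacle is discovering the algebraic identity that produces the cancellation between $\operatorname{length}(\Sigma_\eta)$ and the square-root factor in $|\nabla_S F|$: without it, the expression for $\mu(\eta)$ would appear to blow up precisely when the plane defining $\Sigma_\eta$ is nearly tangent to $S$, and the bound $\mu(\eta) \lesssim \rho^2/|\eta + 2v_0|$ would be unattainable by crude estimates alone.
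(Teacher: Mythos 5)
Your proposal is correct, but it takes a genuinely different route from the paper. The paper reduces, by linearity and density, to $H=\1_{B_r(\bar v)}$ with $r$ small, bounds the right-hand side from below by $r^3/|\bar v|$, bounds the inner planar integrals by $\pi r^2$, and then the whole argument hinges on the spherical-band estimate $|\cA|\lesssim r/|\bar v|$ for the set of $z\in\partial B_1$ whose plane passes within $r$ of $\bar v$. You instead disintegrate exactly: after the shift $\zeta=z-v_0$, $\eta=\zeta+w$, you apply Fubini plus the coarea formula on $S=\partial B_\rho(-v_0)$ to write the left-hand side as $\int H(\eta+v_0)\mu(\eta)\dd\eta$ with $\mu(\eta)=\int_{\Sigma_\eta}|\nabla_S F|^{-1}\dd\mathcal H^1$, and your computations check out: on $\Sigma_\eta$ one has $\nabla F=(\eta-2\zeta)/|\zeta|$ with $|\nabla F|=|\eta|/|\zeta|$, normal component $-B/(|\zeta|\rho)$ with $B=\rho^2-|v_0|^2-\eta\cdot v_0$, $\Sigma_\eta=S\cap\{\zeta\cdot(\eta+2v_0)=\rho^2-|v_0|^2\}$ is a circle of length $2\pi\sqrt{\rho^2-d^2}$, and the identity $\rho^2|\eta+2v_0|^2-A^2=\rho^2|\eta|^2-B^2$ (immediate from $A+B=2\rho^2$, $A-B=2(v_0\cdot\eta+|v_0|^2)$) produces exactly the cancellation you describe, giving $\mu(\eta)\le 3\pi\rho^2/|\eta+2v_0|$ after using $|\zeta|\le 3\rho/2$; the concluding step $|\xi|^2=\rho^2+|w+v_0|^2-|v_0|^2\ge 3\rho^2/4$ and $|\xi+v_0|\gtrsim|\xi|$ on the support is also right. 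The only place requiring a little care is the Fubini/coarea exchange itself (e.g.\ via mollified slicing of the planes $P_\zeta$, noting that tangential level sets, where $\nabla_S F=0$, occur only for a null set of $\eta$ and everything is nonnegative); this is routine. What the two approaches buy: yours yields an exact disintegration with an explicit weight bound $\rho^2/|\eta+2v_0|$ (hence explicit constants and in fact a slightly stronger statement), works directly for general nonnegative $H$, and naturally generalizes the change-of-variables proof of the $v_0=0$ case cited from Imbert--Silvestre; the paper's argument avoids coarea machinery entirely and rests on elementary spherical geometry, at the cost of the reduction to indicators and a less transparent constant.
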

\begin{proof}
First, we notice that we may assume that $\rho=1$ without loss of generality by scaling.  
Next, we notice that if $w \cdot (z-v_0) = 0$ and $|z| = 1$, then $z + w \in B_{\sqrt 3 / 2}^c$.  Indeed,
\[\begin{split}
	|z+w|^2 &= |z|^2 + 2(z-v_0)\cdot w + 2 v_0 \cdot w + |w|^2
		\geq |z|^2 - |v_0|^2
		\geq 1 - (1/2)^2 = 3/4.
\end{split}\]

By density and linearity, we may assume that $H = \1_{B_r(\bar v)}$ where $r\in (0, 1/4)$, and up to rotation, we can assume that $\bar v + v_0$ lies on the positive $x$-axis.  Also, we may assume that $|\bar v| \geq 3/5$ because, if not, then the left hand side of~\eqref{e:henderson-problem} is, by our computation above, zero.  

The right hand side of~\eqref{e:henderson-problem} is easy to compute.  Since $|\bar v| \geq 1/2$, we find
\begin{equation}\label{e:henderson1}
	\int_{B_{1/2}^c} \frac{|H(w)|}{|w|} \dd w
		= \int_{B_{1/2}^c \cap B_r(\bar v)} \frac{1}{|w|} \dd w
		\gtrsim \int_{B_{1/2}^c \cap B_r(\bar v)} \frac{1}{|\bar v|} \dd w
		\geq \frac{2 \pi r^3}{3|\bar v|}.
\end{equation}
The last inequality holds because $B_{1/2}^c \cap B_r(\bar v)$ contains half of a ball of radius $r$.

We now consider the left hand side of~\eqref{e:henderson-problem}.  Let 
\[
	\cA = \{z \in \partial B_1 : \text{ there exists } w\in B_r(\bar v - z) \text{ such that } w\cdot (z - v_0) = 0\}.
\]
Fix any $z\in \cA$.  Then $\{w \in B_r(\bar v - z) : w\cdot(z-v_0) = 0\}$ is the intersection of a plane with a ball of radius $r$ and so can have measure at most $\pi r^2$.  If $z\notin \cA$, then $\{w \in B_r(\bar v - z) : w\cdot(z-v_0) = 0\} = \emptyset$. Thus,
\[
	\int_{\partial B_1(0)} \int_{\{w\in \R^3 : w \cdot (z-v_0) = 0\}} |H(z+w)| \dd w \dd z
		\leq \int_{\cA}  \pi r^2 \dd z
		= \pi r^2 |\cA|.
\]
In view of this and~\eqref{e:henderson1}, we are finished if we establish that
\begin{equation}\label{e:henderson2}
	|\cA|
		\lesssim \frac{r}{|\bar v|}.
\end{equation}

For any $z\in \partial B_1$, $z\in \cA$ if and only if the distance between the plane $P_z = \{ u \in \R^3 : (u - z) \cdot (z-v_0) = 0\}$ and $\bar v$ is less than $r$.  Hence, using elementary linear algebra, if $z\in \cA$,
\[
	r
		\geq \left| \frac{z-v_0}{|z-v_0|} \cdot (\bar v - z)\right|
		\geq \frac{2}{3} |z \cdot \bar v - z\cdot z - v_0 \cdot \bar v + v_0 \cdot z|
		= \frac{2}{3} |z \cdot(\bar v + v_0) - (1 + v_0 \cdot \bar v)|.
\]
Importantly, we have that $|\bar v| \geq 3/5$ and $|v_0|\leq 1/2$ so that $|\bar v + v_0| \gtrsim |\bar v|$.  Then the above implies that
\[
	\Big|\cos(\theta) 
		- \frac{1 + v_0\cdot \bar v}{|\bar v + v_0|}\Big|
		\leq \frac{Cr}{|\bar v|},
\]
where $\theta$ is the angle between $z$ and $\bar v + v_0$ and $C$ is a universal constant.

Let $\theta\in [0,\pi]$ be the angle between $z$ and $\bar v + v_0$.  Let $\theta_-, \theta_+ \in [0,\pi]$ be defined by
\[
	\cos(\theta_-)
		= \max\left\{\frac{1 + v_0\cdot \bar v}{|\bar v + v_0|}
			- \frac{Cr}{|\bar v|}, -1 \right\}
	\quad\text{ and }\quad
	\cos(\theta_+)
		= \min\Big\{\frac{1 + v_0\cdot \bar v}{|\bar v + v_0|}
			+ \frac{Cr}{|\bar v|}, 1\Big\}.
\]
It follows that $\theta \in [\theta_+, \theta_-]$. 

The $z$ such that $\theta \in [\theta_+,\theta_-]$ make up a set that is symmetric about the $x$-axis set with angles between $[\theta_+,\theta_-]$ (recall that we assumed that $\bar v + v_0$ lie on the positive $x$-axis).  By elementary calculus, the area of this set is
\[
	2\pi \int_{\theta_+}^{\theta_-} \sin(\theta) \dd \theta
		= 2\pi \left[ \cos(\theta_+) - \cos(\theta_-)\right]
		\leq \frac{4\pi C r}{|\bar v|}.
\]
We conclude that $|\cA| \lesssim r/|\bar v|$; that is,~\eqref{e:henderson2} holds, and, hence, the proof is complete.
\end{proof}

\bibliographystyle{abbrv}
\bibliography{boltz_existence.bib}

\end{document}